\journalname{Annals of the Institute of Statistical Mathematics}
\newtheorem{assumption}{}
\begin{document}

  \title{Nonparametric inference for  additive models estimated via  simplified smooth backfitting  \thanks{The online version of this article contains supplementary material.}}
  
  \titlerunning{Inference with simplified smooth backfitting}
 
\author{Suneel Babu Chatla}

\institute{Department of Mathematical Sciences\\
The University of Texas at El Paso, 500 West University Avenue,  Texas 79968,  USA, \email{sbchatla@utep.edu}
}


\maketitle
\begin{abstract}
      We investigate hypothesis testing in nonparametric additive models estimated using simplified smooth backfitting (Huang and Yu, Journal of Computational and Graphical Statistics, \textbf{28(2)}, 386--400, 2019). Simplified smooth backfitting achieves oracle properties under regularity conditions and provides closed-form expressions of the estimators that are useful for deriving asymptotic properties. 
    We develop  a generalized likelihood ratio 
    (GLR) (Fan, Zhang and Zhang, Annals of statistics, \textbf{29(1)},153--193, 2001) and a loss function (LF) (Hong and Lee, Annals of Statistics, \textbf{41(3)}, 1166--1203, 2013) based testing framework for inference. 
    Under the null hypothesis, both the GLR and LF tests have asymptotically rescaled chi-squared distributions,  and both exhibit the Wilks phenomenon, which means the scaling constants and degrees of freedom are independent of nuisance parameters. These tests are asymptotically optimal in terms of rates of convergence for nonparametric hypothesis testing. Additionally, the bandwidths that are well-suited for model estimation may be useful for testing. We show that in additive models, the LF test is asymptotically more powerful than the GLR test. We use simulations to demonstrate the Wilks phenomenon and the power of these proposed GLR and LF tests, and a real example to illustrate their usefulness.
\end{abstract}
 

\textit{keywords:} Generalized likelihood ratio, Loss function, Hypothesis testing, Local polynomial regression, Wilks phenomenon

\section{Introduction}
   \label{sec:intro}

   Additive models are popular structural nonparametric regression models and have been widely studied in the literature \citep{friedman1981projection,hastie1990generalized}.  For a random sample $
   \{Y_i,X_{i1},\ldots,X_{id} \}_{i=1}^n$,  we consider the following  additive model: 
   \begin{align}
     Y_i &= \alpha_0+ \sum_{j=1}^d m_j(X_{ij})+ \epsilon_i, \quad  i=1,\ldots,n,
     \label{eqn:int-model}
     \end{align}	
     where $\{\epsilon_i, i=1, \dots, n\}$ is a sequence of i.i.d. random variables with mean zero and
     finite variance $\sigma^2$ and the additive components $m_j(\cdot)$'s are unknown smooth functions which are  identifiable subject to the constraints, $E[m_j(\cdot)]=0$ for  $j=1,\ldots,d$.
     
     Additive models do not suffer greatly from the curse of dimensionality because all of the unknown functions are one-dimensional. It is possible to estimate each additive component with the same asymptotic bias and variance of a theoretical estimate which uses the knowledge of other components. \citet{mam99existence} demonstrates that this oracle property holds true when smooth backfitting is used for estimation. Alternative estimation methods include marginal integration \citep{tjostheim1994nonparametric,linton1995kernel}, backfitting  \citep{buja1989linear,opsomer2000asymptotic}, penalized splines \citep{wood2017generalized} and simplified smooth backfitting \citep{huang2018}. In this study, we concentrate on simplified smooth backfitting. Simplified smooth backfitting, in addition to achieving oracle properties under regularity conditions, provides closed-form expressions of the estimators, which are convenient for deriving asymptotic properties. 
   
   After fitting an additive model, we are often interested in some hypothesis testing problems,  e.g. testing whether a specific additive component in (\ref{eqn:int-model}) is significant, or whether it may be replaced by a parametric form. For simple hypothesis problems such as component significance, the existing penalized estimation methods \citep{meier2009high,lian2012identification,horowitz2013penalized,lian2015separation} provide some quick answers. However, a hypothesis testing framework is necessary for rigorous treatment. 
   The theory for nonparametric hypothesis testing is well developed for univariate nonparametric models $(d=1)$ \citep{ingster1993asymptotically,hart2013nonparametric} but is somewhat limited for additive models. 
    \citet{hardle2004bootstrap} propose a bootstrap inference procedure for generalized semiparametric additive models which is based on marginal integration \citep{linton1995kernel}.  While their test statistic is asymptotically normal, convergence to normality is slow, so they propose a bootstrap approach to calculate its critical values. \cite{roca2005testing} study the testing of second-order interaction terms in generalized additive models. They propose a likelihood ratio test and an empirical-process test based on the deviance under the null and alternative hypotheses. The asymptotic distributions of their test statistics are unknown and are hard to derive, so they use a bootstrap procedure to approximate the null distribution. 
    
    Proceeding in this direction, \citet{fan2005nonparametric} propose a generalized likelihood ratio (GLR) test which is very simple to use. The GLR test compares the likelihood function under the null with the likelihood function under the alternative. \citet{fan2005nonparametric} derive the asymptotic properties of the GLR test statistic using  classical backfitting \citep{opsomer2000asymptotic} for model estimation. It is known that backfitting does not achieve oracle bias when the covariates are correlated. Moreover, the estimators of backfitting do not have closed-form expressions. Regardless of these drawbacks of backfitting, \citet{fan2005nonparametric} show that the GLR test 
    exhibits Wilks phenomenon, which means that the null distribution is
   independent of the nuisance parameters -- a much-desired property for likelihood ratio tests. However, their method is still limited in practice because of the disadvantages of backfitting. Better alternatives for model estimation include smooth backfitting, for which the properties of the GLR test still need to be investigated. This motivates us to study the properties of GLR test using simplified smooth backfitting for estimation. 

    While the GLR test has some appealing features and has been widely used in practice, it is still a nonparametric pseudo test because of the parametric assumptions on error distribution. Another promising alternative is loss function (LF) based testing framework \citep{hong2013loss} which is available for univariate nonparametric models (Model (\ref{eqn:int-model}) with $d=1$). A loss function test compares the models under null and alternative by specifying a penalty for their difference. Many times, this is more relevant to decision-making under uncertainty because it provides the flexibility of choosing a loss function that mimics the objective of the decision-maker. \citet{hong2013loss} show that  LF test is asymptotically more powerful than GLR test  in terms of Pitman's efficiency criterion and possesses both optimal power and Wilks properties. Moreover, all admissible loss functions are asymptotically equally efficient under a general class of local alternatives. In spite of all these advantages,  the properties of  LF test still need to be investigated for nonparametric additive models $(d>1)$. To fill this void, we propose a LF test for additive model (\ref{eqn:int-model}) and derive its asymptotic properties. More recently, although in a different context, \citet{mammen2022backfitting} proposed a backfitting test. Their test compares the nonparametric estimators obtained from smooth backftting in the  $L_2$ norm. Using simulations they show that the backfitting test provides very good performance in finite samples. It is worth mentioning that, the proposed LF test in the study takes a similar form asymptotically.
    
    The main contributions from this study are as follows. We develop  GLR and LF based hypothesis testing frameworks for nonparametric additive model (\ref{eqn:int-model})  using simplified smooth backfitting \citep{huang2018} for estimation. In Theorems \ref{thm:glr1} and \ref{thm:loss1}, we show that both these test statistics follow a rescaled chi-square distribution asymptotically and achieve Wilks phenomenon. Unlike the GLR test in \citet{fan2005nonparametric}, the proposed GLR and LF tests do not require undersmoothing to achieve Wilks phenomenon and the bandwidths that were well-suited for model estimation might also be useful for testing. We also construct new $F$ type of tests for additive models  and establish the connections between GLR, LF and F-test statistics. Theorem \ref{thm:opt-test} shows that GLR and LF test statistics achieve the optimal rate of convergence for nonparametric testing, $n^{-2\eta/4\eta+1}$ where $\eta=2(p+1)$ and $p$ is the order of local polynomial, according to \citet{ingster1993asymptotically}. Furthermore, in Theorem \ref{thm:are} we show that LF test is asymptotically more powerful than  GLR test. Using simulations we validate our theoretical findings and illustrate that both  GLR and LF tests are robust to error distributions to some extent.

    The remainder of the paper is organized as follows. In Section \ref{sec:back}, we introduce smoother matrices which are required for simplified smooth backfitting and outline the estimation algorithm. In Section \ref{sec:hypo-test}, we formulate  GLR and LF test statistics for nonparametric additive model. We derive the asymptotic null distributions for both test statistics and discuss their optimal power properties in Section \ref{sec:asym}. In Section \ref{sec:numeric}, we evaluate the finite sample performances of both GLR  and LF tests using a simulation study and a real example. We include proofs and  additional numerical results in  Supplementary Material.

  \section{Simplified Smooth Backfitting} \label{sec:back}
  In this section, we give a brief introduction to  local polynomial regression \citep{fan1996local} and describe simplified smooth backfitting algorithm, which includes smoother matrices, $\bm{H}_p^*$, of \citet{hua08analysis}.

\subsection{Smoother Matrix}
 Suppose $(Z_{i},Y_i)$, $i=1,\ldots,n$, are $n$ independent observations generated from the following model
\begin{align}
Y = m(Z) + \epsilon,
\label{eqn:back-umodel}
\end{align}
where $Y$ is a continuous response variable,  $Z$ is a continuous explanatory variable and  $\epsilon$ denotes an error term with mean zero and finite variance. We choose local polynomial modeling approach \citep{fan1996local}. To estimate the conditional mean $E(Y|Z=z)$ at a grid point $z$, it considers a $p$th order Taylor expansion $m(z)+ m^{(1)}(z)(Z-z) + \ldots+ m^{(p)}(z)(Z-z)^p/p!$, for $Z$ in a neighborhood of $z$.  

Let $\bm{Z}_{z} =[\bm{1} \quad \bm{z}_{1} \cdots \bm{z}_{p}]_{n \times (p+1)}$ be a design matrix with $\bm{1}=(1,\ldots,1)^T$ of length $n$ and $\bm{z}_{r}=((Z_{1}-z)^r, \ldots, (Z_{n}-z)^r)^T$ for $r=1,\ldots,p$. Let $\bm{W}_{z}=\text{diag}\{K_{h}(Z_{1}-z),\ldots,K_{h}(Z_{n}-z)\}$ be a weight matrix with $K(\cdot)$ as a nonnegative and symmetric probability density function, and $K_{h}(\cdot)=K(\cdot/h)/h$ where $h$ is a bandwidth.
The local polynomial approach estimates  $\bm{\beta}=(\beta_0,\ldots,\beta_p)^T$, where $\beta_r=m^{(r)}/r!$, $r=0,1,\ldots,p$, as
\begin{align}
\underset{\bm{\beta}}{\text{min}} ~ \frac{1}{n} \sum_{i=1}^n \left(Y_i-\sum_{r=0}^p\beta_r\left(Z_{i}-z\right)^r\right)^2 K_{h}(Z_{i}-z)
&=  \underset{\bm{\beta}}{\text{min}} ~ \frac{1}{n} \left(\bm{y}-\bm{Z}_{z}\bm{\beta}\right)^T \bm{W}_{z}\left(\bm{y}-\bm{Z}_{z}\bm{\beta}\right),
\label{eqn:back-uob}
\end{align}
where $\bm{y}=(Y_1,\ldots,Y_n)^T$.  Let $\widehat{\bm{\beta}}=(\widehat{\beta}_0,\ldots,\widehat{\beta}_p)^T = (\bm{Z}_{z}^{T}\bm{W}_{z}\bm{Z}_{z})^{-1}\bm{Z}_{z}^{T}\bm{W}_{z}\bm{y}$, denote the solution vector to (\ref{eqn:back-uob}) and the dependence of $\widehat{\bm{\beta}}$  on $z$ is suppressed for convenience in notation. 

Suppose the support of $Z$ is $[0,1]$. Let 
\begin{align}
  K_h(u,v) &= \frac{K_h(u-v)}{\int K_h(w-v)dw} I(u,v \in [0,1]), 
\end{align}
be the boundary corrected kernel function defined in \citet{mam99existence}. It is easy to note that $\int K_h(u,v)du=1$ for a fixed $v$. The smoother matrix $\bm{H}_{p}^*$ in \citet{hua08analysis} is based on integrating local least squares errors (\ref{eqn:back-uob})
\begin{align}
\frac{1}{n} \int \sum_{i=1}^n \left(Y_i-\sum_{r=0}^p\widehat{\beta}_r\left(Z_{i}-z\right)^r\right)^2 K_{h}(Z_{i},z) dz
=   \frac{1}{n} \bm{y}^T\left(\bm{I}-\bm{H}_{p}^*\right)\bm{y},
\label{eqn:back-iuobj}
\end{align}
where $\bm{I}$ is an $n$- dimensional identity matrix and 
\begin{align}
\bm{H}_{p}^* =\int \bm{W}_{z} \bm{Z}_{z}(\bm{Z}_{z}^{T}\bm{W}_{z}\bm{Z}_{z})^{-1}\bm{Z}_{z}^{T}\bm{W}_{z} dz,
\label{eqn:back-hdef1}
\end{align}
is a smoother matrix in which  the integration is taken  element by element.  
Consequently, we may use $\bm{H}_{p}^*\bm{y}$  as a fitted value for $\bm{y}$ and its $i$th element, $\widehat{m}(Z_{i}) =\bm{e}_i^T\bm{H}_{p}^*\bm{y}$ takes the following form, for $i=1,\ldots,n,$ 
\begin{align}
\int \left(\widehat{\beta}_0+\widehat{\beta}_1 (Z_{i}-z) + \ldots+ \widehat{\beta}_p\left(Z_{i}-z\right)^p \right) K_{h}(Z_{i},z)dz, \label{eqn:back-fit}
\end{align}
where $\bm{e}_i$ is the unit vector with $1$ as the $i$th element.  The estimator (\ref{eqn:back-fit}) involves double smoothing as it combines the fitted polynomials around $Z_{i}$. Therefore, at interior points $[2h, 1-2h]$, the estimator $\widehat{m}(Z_{i})$ achieves bias reduction. While the  bias of traditional local polynomial estimator $\widehat{\beta}_0$ is of order $h^{(p+1)}$ for odd $p$, the bias of  $\widehat{m}(Z_{i})$ is of order $h^{2(p+1)}$ for $p=0,1,2,3$.  In Section \ref{subsubsec:simple-sb}, we define simplified  smooth backfitting estimators for additive model (\ref{eqn:int-model}) analogous to (\ref{eqn:back-fit}).  \citet{hua08analysis} and \citet{hua14local} discuss the properties of $\bm{H}_{p}^*$ and they show that it is symmetric, asymptotically idempotent and asymptotically a projection matrix. Moreover, it is nonnegative definite and shrinking.

\subsection{Estimation}
\label{subsubsec:simple-sb}

\citet{huang2018}'s simplified smooth backfitting algorithm is analogous to the classical backfitting algorithm of \citet{buja1989linear} and \citet{hastie1990generalized} in terms of component updates. The key difference is that it uses the univariate matrices $\bm{H}_{p}^*$ in (\ref{eqn:back-hdef1}) as smoothers in backfitting algorithm. 

Let $\bm{m}_j=(m_j(X_{1j}),\ldots, m_j(X_{nj}))^T$ and $\bm{x}_j=(X_{1j},\ldots,X_{nj})^T$ for $j=1,\ldots,d$. Let  $\mathbb{X}_j=[\bm{1} ~ \bm{x}_j ~\cdots ~ \bm{x}_j^{p_j}]$ for $j=1,\ldots,d$, and $\mathbb{X}=[\bm{1} ~ \bm{x}_1 ~ \cdots ~\bm{x}_d ~ \cdots ~ \bm{x}_1^{p_1}~ \cdots ~ \bm{x}_d^{p_d}]$, where $\bm{1}$ is the vector of ones. Let $\mathbb{X}^{[-0]}=[\bm{x}_1 ~ \cdots ~ \bm{x}_d ~ \cdots ~ \bm{x}_1^{p_1}~ \cdots ~ \bm{x}_d^{p_d}]$ which is same as $\mathbb{X}$ without the column of ones.  For any matrix $\bm{A}$, define $\bm{A}^{\perp}=\bm{I}-\bm{A}$ and $\bm{P}_{\bm{A}}=\bm{A}(\bm{A}^T\bm{A})^{-1}\bm{A}^T$.
Suppose $\bm{H}_{p_j,j}^*$ is an univariate smoother matrix defined as in (\ref{eqn:back-hdef1}) for covariate $\bm{x}_j$ with $p_j$th order local polynomial approximation and bandwidth $h_j$ for $j=1,\ldots,d$. 

 We introduce the following spaces before stating the simplified smooth backfitting algorithm for model (\ref{eqn:int-model}).  Let $\mathcal{M}_1(\bm{H}_{p_j,j}^*)$ be a space spanned by the
eigenvectors of $\bm{H}_{p_j,j}^*$ with eigenvalue 1. It includes polynomials of $\bm{x}_j$ up to $p_j$th order because $\bm{H}_{p_j,j}^* \bm{x}_j^k=\bm{x}_j^k$, $k=0,1\ldots,p_j$, and $j=1,\ldots,d$.  Suppose $\bm{G}$ is an orthogonal projection onto the space $\mathcal{M}_1(\bm{H}_{p_1,1}^*)+ \cdots + \mathcal{M}_1(\bm{H}_{p_d,d}^*)$ and $\bm{G}_j$ is an orthogonal projection onto the space $\mathcal{M}_1(\bm{H}_{p_j,j}^*)$, $j=1,\ldots,d$. Then,
\begin{align}
  \bm{G} = \bm{P}_{\mathbb{X}} = \bm{P}_{\bm{1}} + \bm{P}_{\bm{P}_{\bm{1}}^{\perp} \mathbb{X}^{[-0]}}, \qquad \bm{G}_j = \bm{P}_{\mathbb{X}_j}, \label{eqn:back-gmat-def}
\end{align}
where $\bm{P}_{\mathbb{X}}=\mathbb{X}(\mathbb{X}^T\mathbb{X})^{-1}\mathbb{X}$ and $\bm{P}_{\bm{1}}$, $\bm{P}_{\mathbb{X}_j}$ and $\bm{P}_{\bm{P}_{\bm{1}}^{\perp} \mathbb{X}^{[-0]}}$ are defined similarly. Since the modified smoothers $\bm{H}_{p_j,j}^*- \bm{G}_j$, $j=1,\ldots,d$, have eigenvalues in  $[0,1)$,  by Proposition 3 in \citet{buja1989linear}, we obtain  closed form expressions for backfitting estimators. For illustration, we plot the eigenvalues of the smoother and the modified smoother using local constant ($p=0$) and local linear terms ($p=1$) in Figure \ref{fig:back-ssb-eig}. While the local constant smoother has one eigenvalue equal to 1, the local linear smoother has two eigenvalues that are equal to 1. The modified smoother has eigenvalues in $[0,1)$ for bandwidths that are not too small.  

The simplified smooth backfitting algorithm with modified smoothers $\bm{H}_{p_j,j}^*- \bm{G}_j$ for $j=1,\ldots,d$, and $p_j =0,1, 2, 3$,  is stated as follows:
\begin{enumerate}
	\item Initialize: $\bm{m}_j^*=\bm{m}_j^{(0)}$ with $\bm{m}_j^{(0)}$ in the space of $\mathcal{M}(\bm{H}_{p_j,j}^*- \bm{G}_j)$, $j=1,\ldots,d$.
  \item Cycle:  $\bm{m}_j^{*^{new}}=\bm{H}_{p_j,j}^*\left(\bm{G}^{\perp}\bm{y}-\sum_{l<j}\bm{m}_l^{*^{new}}-\sum_{l>j}\bm{m}_l^{*^{old}}\right)$, $j=1,\ldots,d$, since 
  $ \bm{G}_j\bm{m}_l^{*^{old}}=\bm{G}_j\bm{m}_l^{*^{new}}=\bm{0}$   and $\bm{G}_j \bm{G}^{\perp}=\bm{0}$.
	\item Continue step 2 until the individual functions do not change. The final estimator for the overall fit is $\bm{G}\bm{y} + \widehat{\bm{m}}_1^*+ \ldots + \widehat{\bm{m}}_d^*$.
\end{enumerate} 
\begin{figure}
  \centering
  \includegraphics[scale=0.35]{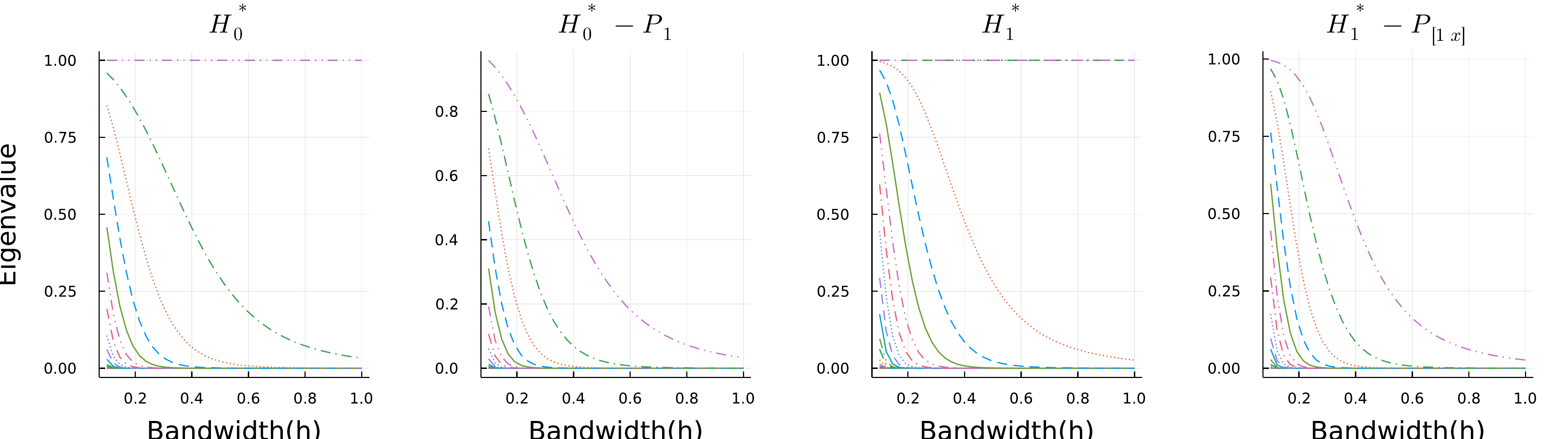}
  \caption{Eigenvalues of smoother $\bm{H}_{p_j}^*$ and modified smoother $\bm{H}_{p_j}^*-\bm{G}_j$ for different values of bandwidths and for $p_j=0,1$. Here $\bm{G}_j=\bm{P}_1$ and $\bm{G}_j=\bm{P}_{[\bm{1} ~ \bm{x}_j]}$ for $p_j=0$ and 1, respectively. }
  \label{fig:back-ssb-eig}
\end{figure}

Furthermore, we can write
\begin{align*}
  \bm{G}\bm{y} &=  \left(\bm{P}_{\bm{1}} + \bm{P}_{\bm{P}_{\bm{1}}^{\perp} \mathbb{X}^{[-0]}} \right) \bm{y} = \widehat{\alpha}_0 \bm{1} + \widehat{\bm{g}}_1+\ldots+\widehat{\bm{g}}_d,  
\end{align*}
for $\widehat{\bm{g}}_j=(\widehat{g}_{1j},\ldots,\widehat{g}_{nj})^T$ such that $\sum_{i=1}^n \widehat{g}_{ij}=0$, $j=1,\ldots,d$. Then the  final estimator for $j$th additive component is $\widehat{\bm{m}}_j= (\widehat{m}_j(X_{1j}),\ldots, \widehat{m}_j(X_{nj}))^T= \widehat{\bm{g}}_j + \widehat{\bm{m}}_j^*$, for $j=1,\ldots,d$. Since $\bm{G}_j\widehat{\bm{m}}_j^*=0$  it follows that $\sum_{i=1}^n \widehat{m}_j(X_{ij})=0$ for $j=1,\ldots,d$. Consequently, the estimators $\widehat{\bm{m}}_j$'s are identifiable.

Since the smoothers $\bm{H}_{p_j,j}^*$, $j=1,\ldots,d$, are symmetric and shrinking, using the results in \citet{buja1989linear}, \citet{huang2018} show that the above algorithm converges. We provide their results in the following:
\begin{itemize}
  \item It follows from Theorem 2 of \citet{buja1989linear} that the normal equations 
  \begin{align}
      \begin{pmatrix}
        \bm{I} & \bm{H}_{p_1,1}^* & \bm{H}_{p_1,1}^* & \cdots & \bm{H}_{p_1,1}^* \\
        \bm{H}_{p_2,2}^* & \bm{I} & \bm{H}_{p_2,2}^* & \cdots & \bm{H}_{p_2,2}^* \\
        \vdots & \vdots & \vdots & \ddots & \vdots \\
        \bm{H}_{p_d,d}^* & \bm{H}_{p_d,d}^* & \bm{H}_{p_d,d}^* & \cdots & \bm{I} 
      \end{pmatrix} \begin{pmatrix}
        \bm{m}_1\\
        \bm{m}_2\\
        \vdots \\
        \bm{m}_d
      \end{pmatrix}
      =\begin{pmatrix}
        \bm{H}_{p_1,1}^* \bm{y}\\
        \bm{H}_{p_2,2}^* \bm{y}\\
        \vdots\\
        \bm{H}_{p_d,d}^* \bm{y}
      \end{pmatrix}
      \label{eqn:back-ssb-normal}
  \end{align}
  are consistent for every $\bm{y}$.
  \item Based on Theorem 9 of \citet{buja1989linear}, the backfitting algorithm converges to some solution of the normal equations (\ref{eqn:back-ssb-normal}). 
  \item The solution is unique unless there is an exact concurvity which happens when there is a linear dependence among the eigenspaces corresponding to eigenvalue 1 of the $\bm{H}_{p_j,j}$'s. 
\end{itemize}

We now obtain explicit expressions for the estimators $\widehat{\bm{m}}_j^*$, $j=1,\ldots,d$. Let $\bm{A}_j=(\bm{I}-(\bm{H}_{p_j,j}^*- \bm{G}_j))^{-1}(\bm{H}_{p_j,j}^*- \bm{G}_j)$ and  $\bm{A}=\sum_{j=1}^d\bm{A}_j$. By Proposition 3 in \citet{buja1989linear}, we obtain $\widehat{\bm{m}}_j^* = \bm{A}_j \left(\bm{I}+\bm{A}\right)^{-1}\bm{G}^{\perp}\bm{y}$. While these expressions provide estimators without requiring an iterative procedure, we still favor the backfitting algorithm because of its numerical stability. Furthermore, the backfitting approach is computationally simpler. Assume that a certain number of iterations is sufficient for convergence. In terms of computations, the explicit expressions cost $O(n^3p)$ operations whereas the  backfitting algorithm only costs $O(np)$ operations \citep{hastie1990generalized}. However, this might not be a concern for small sample sizes. In  Figure \ref{fig:back-explicit}, we provide a comparison of the estimated functions $\widehat{m}_j^*$ using  backfitting and explicit expressions for the model considered in Section \ref{sec:simul}. Both methods provide similar results. The computation times of backfitting and explicit expressions across different sample sizes are presented in Table \ref{tab:ctime-back-exp}. For small sample sizes, both approaches took about the same amount of time. However,  solving explicit expressions is computationally costly than backfitting for large sample sizes. More information on the computer facilities can be found in Section \ref{sec:simul}.
\begin{figure}
  \centering
  \includegraphics[scale=0.35]{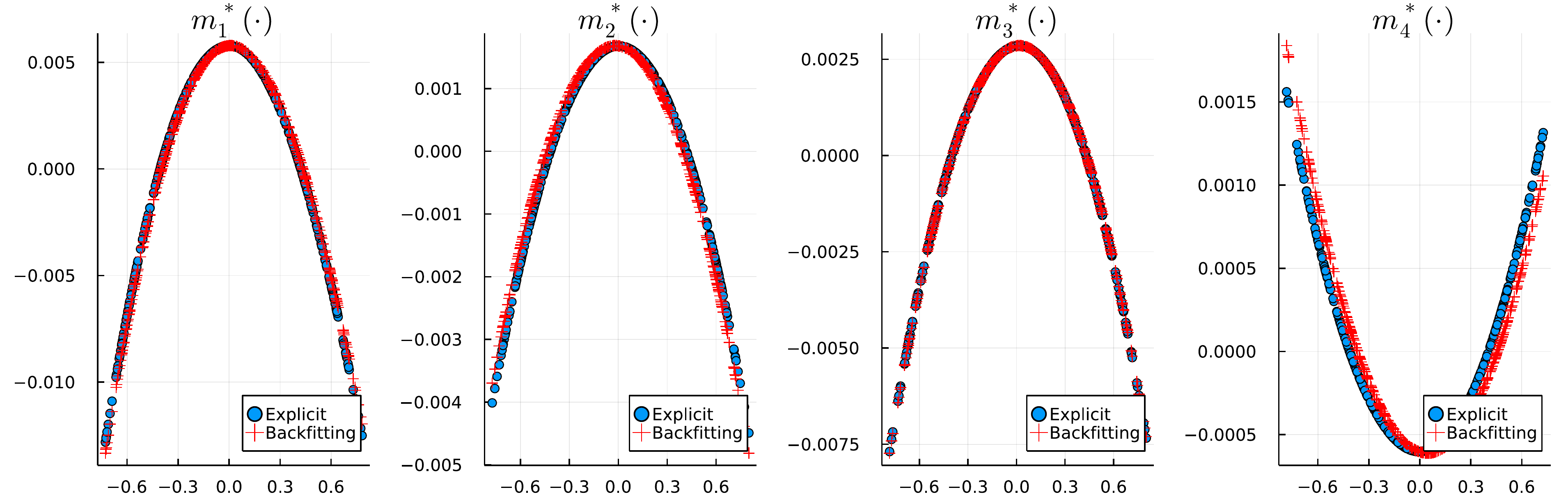}
  \caption{The estimated additive functions (nonparametric part) for model (\ref{eq:sim-model}) (shown later in Section \ref{sec:simul}) using the explicit expressions and the backfitting algorithm (19 iterations). Here $n=400$ and optimal bandwidths $(\widehat{h}_{1,opt},\widehat{h}_{2,opt},\widehat{h}_{3,opt},\widehat{h}_{4,opt})=(0.74,1.2,1.15,1.16)$.}
  \label{fig:back-explicit}
\end{figure}
\begin{table}
  \centering
\begin{tabular}{lll}
  \hline
  $n$  & Backfitting(ms)  & Explicit(ms) \\
    \hline
  200 &  664.8 & 682.6 \\
  400 &  2414  & 2514 \\
  800 &  8961  &   8650 \\
    1600 & 40717 & 40299 \\
    3000 & 169575 & 194010\\
      6000 & 856743 & 1203930\\
 \hline
  \end{tabular}
  \caption{Comparison of computation times (in milliseconds)  for model (\ref{eq:sim-model}) (shown later in Section \ref{sec:simul}) using backfitting algorithm and explicit expressions.}
  \label{tab:ctime-back-exp}
\end{table}

  At the convergence of simplified smooth backfitting algorithm, we obtain smooth backfitting estimates (or estimates at grid points) $\widehat{\beta}_{jr}$   by performing a local polynomial regression of $\bm{x}_j$ on partial residual $(\bm{G}^{\perp}\bm{y}-\sum_{l<j}\widehat{\bm{m}}_l-\sum_{l>j}\widehat{\bm{m}}_l)$.
Formally,
\begin{align}
  \widehat{\beta}_{jr} &= \bm{e}_r^T(\bm{X}_{x_j}^{j^T} \bm{W}_{x_j}^j \bm{X}_{x_j}^j)^{-1} \bm{X}_{x_j}^{j^T} \bm{W}_{x_j}^j(\bm{G}^{\perp}\bm{y}-\sum_{l<j}\widehat{\bm{m}}_l-\sum_{l>j}\widehat{\bm{m}}_l),
  \label{eqn:beta-estr}
\end{align}  
for $1 \le j \le d$, $0 \le r \le p_j$, where $\bm{e}_r$ is a unit vector with 1 at the $r$th position and $\bm{W}_{x_j}^j$ and $\bm{X}_{x_j}^j$ are defined similar to $\bm{W}_{z}$ and $\bm{Z}_{z}$ in (\ref{eqn:back-uob}).

\citet{huang2018} discuss the properties of estimators $\widehat{\bm{m}}_j^*$ and $\widehat{\beta}_{j0}$ for $j=1,\ldots,d$. They show that,  estimator $\widehat{\bm{m}}_j^*$ achieves asymptotic bias of order $\sum_{j=1}^d h_j^{2(p_j+1)}$, for $p_j=0,1,2,3$, in the interior range $[2h_j,1-2h_j]$ for $j=1,\ldots,d$. Similarly, asymptotic bias of $\widehat{\beta}_{j0}$ is of order $h_j^{(p_j+1)}+\sum_{k \neq j}^d h_k^{2(p_k+1)}$ if $p_j=1$ or $3$, and is of order $h_j^{(p_j+2)}+\sum_{k \neq j}^d h_k^{2(p_k+1)}$ if $p_j=0$ or $2$,  for $j=1,\ldots,d$, in the interior range. 

\section{Proposed Test Statistics} \label{sec:hypo-test}
In this section, we define both  GLR  and LF test statistics for  Model (\ref{eqn:int-model}) which are computed using the simplified smooth backfitting in Section \ref{sec:back}. For simplicity in presentation, we consider the following  simple hypothesis testing problem:
\begin{align}
H_0: m_d(x_d)=0 \quad \text{vs.} \qquad   H_1: m_d(x_d) \ne 0, 
\label{eqn:h1}
\end{align}
which tests whether the $d$th predictor makes any significant contribution to the dependent variable. This testing problem is a nonparametric null versus a nonparametric alternative. It is also possible to choose more complicated hypothesis testing problems such as composite hypotheses, and nonparametric null versus parametric alternatives. We discuss some of these in our numerical results in Section \ref{sec:numeric}.

We now introduce some matrices which will be used in our asymptotic results. Let $\bm{G}_{[-d]}=\bm{P}_{\mathbb{X}^{[-d]}}$ where $\mathbb{X}^{[-d]}=[\bm{1} ~ \bm{x}_1 ~ \cdots ~ \bm{x}_{d-1} ~ \cdots ~ \bm{x}_1^{p_1} ~ \cdots ~ \bm{x}_{d-1}^{p_{d-1}}]$ as in (\ref{eqn:back-gmat-def}). Define
  \begin{align}
  \bm{C} &= \bm{P}_{\bm{G}_{[-d]}^{\perp}\mathbb{X}_d^{[-0]}}+\bm{G}^{\perp}\bm{H}_{p_d,d}^*+\bm{H}_{p_d,d}^*\bm{G}^{\perp}-\bm{G}^{\perp} \bm{H}_{p_d,d}^* \bm{H}_{p_d,d}^* \bm{G}^{\perp} + O(n^{-1}h_d^{-1}\bm{I}+ n^{-1}\bm{J}),\label{eqn:cmat}\\
  \bm{D} &= \bm{G}^{\perp}- \sum_{j=1}^d \bigg\{ \bm{H}_{p_j,j}^*\bm{G}^{\perp} + O(n^{-1}h_j^{-1}\bm{I}+n^{-1}\bm{J})\bigg\}, \label{eqn:dmat}\\
  \bm{E} &= \bm{P}_{\bm{G}_{[-d]}^{\perp}\mathbb{X}_d^{[-0]}} + \bm{H}_{p_d,d}^* \bm{G}^{\perp}+O(n^{-1}h_d^{-1}\bm{I}+n^{-1}\bm{J}),\label{eqn:emat}
  \end{align}
  where $\bm{P}_{\bm{G}_{[-d]}^{\perp}\mathbb{X}_d^{[-0]}}=\bm{G}_{[-d]}^{\perp}\mathbb{X}_d^{[-0]} \left(\mathbb{X}_d^{{[-0]}^{T}} \bm{G}_{[-d]}^{\perp} \mathbb{X}_d^{[-0]} \right)^{-1}\mathbb{X}_d^{{[-0]}^{T}} \bm{G}_{[-d]}^{\perp}$, $\mathbb{X}_d^{[-0]}=[\bm{x}_d  \cdots  \bm{x}_d^{p_d}]_{n \times p_d}$, $\bm{J}$ is the matrix of ones, and $\bm{I}$ is an identity matrix of size $n$.

\subsection{The Generalized Likelihood Ratio Test}
We define the GLR test statistic analogous to  \citet{fan2005nonparametric}. Since the distribution of $\epsilon_i$ is unknown, pretend that the error distribution is normal, $\mathcal{N}(0,\sigma^2)$, to obtain the likelihood. However, we note that  normality assumption is not needed to derive asymptotic properties for GLR statistic. In Section \ref{sec:simul}, we show that asymptotic distribution of GLR statistic is robust to error distribution to some extent.  Now, the log-likelihood under model (\ref{eqn:int-model}) is
\begin{align*}
-\frac{n}{2} \log(2\pi\sigma^2) -\frac{1}{2\sigma^2}\sum_{i=1}^{n}\left(Y_i-\alpha_0-m_1(X_{i1})-\ldots-m_d(X_{id})\right)^2.
\end{align*}
Replacing $\alpha_0$, $m_k(\cdot)$, $k=1,\ldots,d$,  with their estimates $\widehat{\alpha}_0$, $\widehat{m}_k(\cdot)$,  yields
\begin{align*}
\ell(H_1) &= -\frac{n}{2} \log(2\pi\sigma^2) -\frac{1}{2\sigma^2}RSS_1,
\end{align*}
where $RSS_1=\sum_{i=1}^{n}\left(Y_i-\widehat{\alpha}_0-\widehat{m}_1(X_{i1})-\ldots-\widehat{m}_d(X_{id})\right)^2$. This likelihood  function attains maximum for $\sigma^2=\frac{1}{n}RSS_1$ which implies that $\ell(H_1)\approx-\frac{n}{2}\log(RSS_1)$. Similarly, the log-likelihood for $H_0$ is $\ell(H_0) \approx-\frac{n}{2}\log(RSS_0)$, with $RSS_0=\sum_{i=1}^n \big(Y_i-\widehat{\alpha}_0-\widetilde{m}_1(X_{i1})-$ $\ldots-\widetilde{m}_d(X_{i(d-1)})\big)^2$, and  $\widetilde{m}_k(\cdot)$, $k=1,\ldots,d-1$, are the estimators of $m_k(\cdot)$ under $H_0$, using the simplified smooth backfitting algorithm with the same set of bandwidths. Now, we define the GLR test statistic as
\begin{align}
\lambda_n(H_0) = [\ell(H_1)-\ell(H_0)] \approxeq \frac{n}{2} \log\frac{RSS_0}{RSS_1} \approx \frac{n}{2} \frac{RSS_0-RSS_1}{RSS_1},
\label{eqn:glrt}
\end{align}
and  reject the null hypothesis when $\lambda_n(H_0)$ is large. From Lemma \ref{lem:an12} (Supplementary Material), we obtain that 
\begin{align*}
  \frac{RSS_0-RSS_1}{RSS_1} &= \frac{\bm{y}^T \bm{C} \bm{y}}{\bm{y}^T \bm{D} \bm{y}}
\end{align*}
for $\bm{C}$ and $\bm{D}$ defined in (\ref{eqn:cmat}) and (\ref{eqn:dmat}), respectively. This motivates us to consider the following F-type of test \citep{huang2010analysis}
\begin{align}
  F_{\lambda} & = \frac{\bm{y}^T \bm{C} \bm{y}}{\bm{y}^T \bm{D} \bm{y}} \frac{\text{tr}(\bm{D})}{\text{tr}(\bm{C})}, \label{eqn:f-lambda}
\end{align}
where $\text{tr}(\cdot)$ denotes the trace.
Theorem \ref{thm:glr1} shows the statistic (\ref{eqn:f-lambda}) indeed follows F distribution asymptotically.

\subsection{The Loss Function Test}
\label{subsubsec:lft}
The LF testing framework \citep{hong2013loss} compares models under $H_0$ and $H_1$ via a loss function $L:\mathbb{R}^2 \rightarrow \mathbb{R}$. This is more relevant to decision-making under uncertainty in some applications. We write the discrepancy between the models as
\begin{eqnarray}
Q_n &=& \sum_{i=1}^nL\left[\widehat{m}_+(X_{i1},\ldots,X_{id}), \widetilde{m}_{+}^{(-d)}(X_{i1},\ldots,X_{i(d-1)})\right],
\label{eqn:lft-1}
\end{eqnarray}
where $\widehat{m}_+(X_{i1},\ldots,X_{id})=\widehat{\alpha}_0+\widehat{m}_1(X_{i1})+\ldots+\widehat{m}_d(X_{id})$ and  $\widetilde{m}_{+}^{(-d)}(X_{i1},\ldots,X_{i(d-1)})=\widehat{\alpha}_0+\widetilde{m}_1(X_{i1})+\ldots+\widetilde{m}_{d-1}(X_{i(d-1)})$ are the $i$th predicted values for the models under $H_1$ and $H_0$, respectively. Similar to \citet{hong2013loss}, we consider a specific class of functions called the generalized cost-of-error function defined as $L(u,v)=d(u-v)$, where $d(\cdot)$ is twice continuously differentiable with $d(0)=0$, $d'(0)=0$ and $ 0 < d''(0) <\infty$.

We define the LF test statistic as
\begin{align}
q_n(H_0) &= \frac{Q_n}{n^{-1}RSS_1}\approx \frac{d''(0)/2 \sum_{i=1}^n(\widehat{m}_+(X_{i1},\ldots,X_{id})- \widetilde{m}_{+}^{(-d)}(X_{i1},\ldots,X_{i(d-1)}))^2+R}{n^{-1}RSS_1},
\label{eqn:lft}
\end{align}
where $RSS_1$ is the residual sum of squares under alternative and $R$ is the remainder term in the Taylor expansion of $d(\cdot)$. We reject the null hypothesis  when $q_n(H_0)$ is large.

Interestingly, when the estimated additive functions  under $H_0$ and $H_1$ are approximately equal, that means, $\widehat{m}_j(X_{ij}) \approxeq \widetilde{m}_j(X_{ij})$ for $i=1,\ldots,n$ and $j=1,\ldots,d-1$, we obtain 
\begin{align*}
  Q_n  \approx \frac{d''(0)}{2} \sum_{i=1}^n \widehat{m}_d^2(X_{id}) + R.
\end{align*}
The form of the statistic $Q_n$ is similar to the backfitting based test statistic  proposed in \citet{mammen2022backfitting} as
\begin{align}
  S_n &= \int \widehat{m}_d^2(x_d) f_d(x_d) dx_d, \label{eqn:sn}
\end{align}
where $f_d(\cdot)$ is the distribution of $X_d$. For more discussion of similar tests please refer to \citet{mammen2022backfitting}.

Based on (\ref{eqn:lft}), the arguments analogous to Lemma \ref{lem:an12} (Supplementary Material) help us to define the following F-type of test statistic 
\begin{align}
  F_q &= \frac{\bm{y}^T\bm{E}^T\bm{E}\bm{y}}{\bm{y}^T\bm{D}\bm{y}} \frac{\text{tr}(\bm{D})}{\text{tr}(\bm{E}^T\bm{E})},
  \label{eqn:f-q}
\end{align}
for $\bm{E}$ defined in (\ref{eqn:emat}). This test is discussed in Theorem \ref{thm:loss1}.

%

\section{Asymptotic Results} \label{sec:asym}
  In this section, we develop asymptotic theory for the GLR  and the LF test statistics defined in Section \ref{sec:hypo-test} under model (\ref{eqn:int-model}). We derive the asymptotic null distributions of these test statistics when the testing problem is of the form (\ref{eqn:h1}) and discuss Wilks phenomenon and optimal power properties. For simplicity in theoretical arguments we assume that all the data points are interior $[2h_j, 1-2h_j]$, $j=1,\ldots,d$ \citep{huang2018}. However, we remark that, under the conditions of Theorems \ref{thm:glr1} and \ref{thm:loss1}, the  additional bias terms introduced by the boundary points are of smaller order. Therefore, our theory holds when the data include boundary points.  
  
  We list some of the assumptions required for our theoretical results in the following.

  \begin{assumption}\label{as:1-x}  The 
    densities $f_j(\cdot)$ of $X_j$  are Lipschitz-continuous and bounded away
    from 0 and have bounded support $\Omega_j$ for $j=1,\ldots,d$. The joint density of $X_j$ and $X_{j'}$, $f_{j,j'}(\cdot,\cdot)$, for $1 \le j \neq j' \le d$, is also Lipschitz continuous and have bounded support.
  \end{assumption}
  \begin{assumption} \label{as:2-ker-h}  The kernel $K(\cdot)$ is a bounded
    symmetric density function with bounded support and satisfies Lipschitz
    condition. The bandwidth $h_j \rightarrow 0$   and $n h_j^2 / (\ln n)^2 \rightarrow \infty$, $j=1,\ldots,d$, as $n \rightarrow \infty$.
  \end{assumption}
  \begin{assumption} \label{as:3-m}
    The $(2p_j+2)-$th derivative of $m_j(\cdot)$, $j=1,\ldots,d$, exists. 
  \end{assumption}
  
  \begin{assumption} \label{as:4-e}
    The error $\epsilon$ has mean 0, variance
    $\sigma^2$, and  finite fourth moment.
  \end{assumption}
  
  \begin{assumption}\label{as:5-d}
    The loss function $d: \mathbb{R} \rightarrow \mathbb{R}^+$ has a unique minimum at $0$, and
    $d(z)$ is monotonically nondecreasing as $|z| \rightarrow \infty$.
    Furthermore, $d(z)$ is twice continuously differentiable at $0$ with $d(0)=0$,
    $d'(0)=0$, $M=\frac{1}{2}d''(0) \in (0,\infty)$, and $|d''(z)-d''(0)| \le
    C|z|$ for any $z$ near $0$.
  \end{assumption}
  
  The Assumptions \ref{as:1-x}, \ref{as:2-ker-h} and \ref{as:4-e} are standard for additive models in the nonparametric smoothing literature; for example, they are similar to  \citet{fan2005nonparametric, hua14local,huang2018}. Assumption \ref{as:3-m} is required for simplified smooth backfitting to achieve bias reduction. For example, similar assumptions are found in \citet{hua14local,huang2018}. Assumption \ref{as:5-d} is from \citet{hong2013loss} and it is required for the loss function.

\subsection{Asymptotic Null Distributions of GLR and LF Tests}
  
  Let $\mu_t=\int u^t K(u) du$ and $v_t=\int u^t K^2(u) du$ for $t=0,1,\ldots$. Let $\bm{S}=(\mu_{i+j-2})$, $1 \le i,j \le (p_d+1)$ be a $(p_d+1)\times (p_d+1)$ matrix and $s^{i,j}$ are the elements of $\bm{S}^{-1}$. Denote the convolution of $K_l(x)$ with $K_m(x)$ by $K_l*K_m$, where $K_l(x)=x^lK(x)$ for $l,m=0,1,\ldots$.  
  Let,
  \begin{align*}
  \mu_n = \frac{1}{2}E\left(\sum_{i=1}^n c_{ii}\right) &=\frac{| \Omega_d |}{h_d} \bigg(\sum_{l=0}^{p_d}\sum_{m=0}^{p_d} v_{l+m} s^{(m+1),(l+1)} \\ & \qquad - \frac{1}{2} \int \bigg\{ \sum_{l=0}^{p_d}\sum_{m=0}^{p_d} (K_l*K_m)(u) (-1)^m s^{(m+1),(l+1)} \bigg\}^2 du  \bigg)+ o_p(h_d^{-1}), \\
  \sigma_{n}^2= \sum_{i<j}c_{ij}^2&= \frac{| \Omega_d |}{h_d} \int \bigg\{ \sum_{l=0}^{p_d}\sum_{m=0}^{p_d} (K_l*K_m)(u) (-1)^m s^{(m+1),(l+1)} \\&\qquad -\frac{1}{2} \int \bigg[ \sum_{l=0}^{p_d} \sum_{m=0}^{p_d} (K_l * K_m) (u+v) (-1)^m s^{(m+1),(l+1)} \bigg] \\ & \qquad \qquad \times \bigg[ \sum_{l=0}^{p_d} \sum_{m=0}^{p_d} (K_l * K_m) (v) (-1)^m s^{(m+1),(l+1)} \bigg] dv \bigg\}^2 du + o_p(h_d^{-1}), \text{ and }\\
  r_k &= \frac{2\mu_n}{\sigma_{n}^2},
  \end{align*}
  where $c_{ij}$ is the $(i, j)$th, $1 \le i, j \le n$, element of $\bm{C}$ defined in (\ref{eqn:cmat}),  $| \Omega_d |$ is the length of the support of the density $f_d(x_d)$ of $X_d$. In practice, the above asymptotic expressions are not required to compute the quantities $\mu_n$ and $\sigma_n$. We can compute them directly from the matrix $\bm{C}$ defined in (\ref{eqn:cmat}) which provides a good approximation.
  
  Hereafter, the notations ``$\xrightarrow{d}$'' and ``$\xrightarrow{p}$'' stand for convergence in distribution and probability, respectively.   The following theorem describes the Wilks type of result for the GLR test conditional on the sample space $\mathcal{X}$. 

  \begin{theorem} \label{thm:glr1} (GLR test)
    Suppose that conditions \ref{as:1-x}--\ref{as:4-e} hold and $0 \le p_j \le 3$, $j=1,\ldots,d$. Then, under
    $H_0$ for the testing problem (\ref{eqn:h1})
    \begin{align}
    P\left\{ \sigma_{n}^{-1}\left(  \lambda_n(H_0)-\mu_n-\frac{1}{2\sigma^2}d_{1n} \right) < t |\mathcal{X} \right\} \xrightarrow{d} \bm{\Phi}(t),
    \end{align}
    where  $d_{1n}=O_p\left(1+\sum_{j=1}^dnh_j^{4(p_j+1)}+\sum_{j=1}^d\sqrt{n}h_j^{2(p_j+1)}\right)$ and $\bm{\Phi}(\cdot)$ is the standard normal distribution. Furthermore, if $nh_j^{4(p_j+1)}h_d \rightarrow 0$ for $j=1,\ldots,d$, conditional on $\mathcal{X}$, 
    $ r_k\lambda_n(H_0) \xrightarrow{} \chi^2_{r_k\mu_n}$ as $n \rightarrow \infty$.
    Similarly,
    \begin{align}
    F_{\lambda} &= \frac{2\lambda_n(H_0) \text{tr}(\bm{D})}{n~ \text{tr}(\bm{C})} \xrightarrow{} F_{\text{tr}(C),\text{tr}(D)},
    \label{eqn:th1-glr-f}
    \end{align}
    as $n \rightarrow \infty$, where $\text{tr}(C)$ and $\text{tr}(D)$ are the corresponding degrees of freedom.
  \end{theorem}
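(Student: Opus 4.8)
The plan is to exploit the quadratic-form representation $\lambda_n(H_0)\approxeq \frac{n}{2}\frac{\bm{y}^T\bm{C}\bm{y}}{\bm{y}^T\bm{D}\bm{y}}$ supplied by Lemma~\ref{lem:an12} and to reduce the whole statement to a conditional central limit theorem for a quadratic form in the errors $\bm{\epsilon}$. First I would dispose of the denominator: since $\bm{D}$ is, to leading order, $\bm{G}^{\perp}$ minus the smoother contributions and behaves like a projection with $\text{tr}(\bm{D})=n(1+o_p(1))$, a routine consistency argument gives $\bm{y}^T\bm{D}\bm{y}=RSS_1=n\sigma^2(1+o_p(1))$ under $H_0$. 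Hence $\lambda_n(H_0)=\frac{1}{2\sigma^2}\bm{y}^T\bm{C}\bm{y}\,(1+o_p(1))$, and the entire analysis localizes to the numerator.

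Next I would decompose the numerator. Writing $\bm{y}=\bm{\mu}+\bm{\epsilon}$ with $\bm{\mu}=\alpha_0\bm{1}+\sum_{j<d}\bm{m}_j$ under $H_0$ gives
\[
\bm{y}^T\bm{C}\bm{y}=\bm{\mu}^T\bm{C}\bm{\mu}+2\bm{\mu}^T\bm{C}\bm{\epsilon}+\bm{\epsilon}^T\bm{C}\bm{\epsilon}.
\]
The deterministic term $\bm{\mu}^T\bm{C}\bm{\mu}$ and the linear cross term are what collect into $\frac{1}{2\sigma^2}d_{1n}$: using the component bias orders recorded in Section~\ref{sec:back} (bias $\sum_j h_j^{2(p_j+1)}$ on the interior under Assumption~\ref{as:3-m}), I would show $\bm{\mu}^T\bm{C}\bm{\mu}=O_p(\sum_j nh_j^{4(p_j+1)})$ and, via $\operatorname{Var}(2\bm{\mu}^T\bm{C}\bm{\epsilon}\mid\mathcal{X})=4\sigma^2\bm{\mu}^T\bm{C}^2\bm{\mu}$ together with $\|\bm{C}\|=O(1)$, that $2\bm{\mu}^T\bm{C}\bm{\epsilon}=O_p(\sum_j\sqrt{n}h_j^{2(p_j+1)})+O_p(1)$, matching the stated order of $d_{1n}$. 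The quadratic form $\bm{\epsilon}^T\bm{C}\bm{\epsilon}$ then carries the limit law.

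The core step is the conditional CLT for $\frac{1}{2\sigma^2}\big(\bm{\epsilon}^T\bm{C}\bm{\epsilon}-\sigma^2\operatorname{tr}(\bm{C})\big)$. Splitting into the diagonal part $\sum_i c_{ii}(\epsilon_i^2-\sigma^2)$ and the off-diagonal part $2\sum_{i<j}c_{ij}\epsilon_i\epsilon_j$, I would show the latter dominates and apply a martingale-difference central limit theorem of de Jong type; the centering $\sigma^2\operatorname{tr}(\bm{C})$ reproduces $\mu_n=\tfrac12 E(\sum_i c_{ii})$ and the leading variance $\tfrac12\operatorname{tr}(\bm{C}^2)\approx\sum_{i<j}c_{ij}^2=\sigma_n^2$. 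Verifying the Lindeberg condition demands the fourth-order control $\sum_{i,j}c_{ij}^4=o(\sigma_n^4)$ and $\max_i\sum_j c_{ij}^2=o(\sigma_n^2)$, which I would extract from an explicit kernel-convolution evaluation of the entries of $\bm{C}$; the same evaluation yields the closed forms for $\mu_n$ and $\sigma_n^2$, both of exact order $h_d^{-1}$, so that $\sigma_n\to\infty$. This moment analysis of the double-smoothing matrix $\bm{H}_{p_d,d}^*$ entering $\bm{C}$, together with control of the $O(n^{-1}h_d^{-1}\bm{I}+n^{-1}\bm{J})$ remainders, is the main obstacle and the most delicate part of the argument.

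The rescaled-chi-square and $F$ statements then follow by moment matching. Since $r_k=2\mu_n/\sigma_n^2$, the variable $r_k\lambda_n$ has conditional mean $r_k\mu_n$ and variance $r_k^2\sigma_n^2=2r_k\mu_n$, matching the first two moments of $\chi^2_{r_k\mu_n}$; because the degrees of freedom $r_k\mu_n\asymp h_d^{-1}\to\infty$, that chi-square is itself asymptotically normal, so the normal limit of the first display and the rescaled chi-square coincide, provided the bias centering is negligible relative to $\mu_n$, i.e. $\frac{1}{2\sigma^2}d_{1n}=o_p(\mu_n)$. Under Assumption~\ref{as:3-m} this reduces precisely to $nh_j^{4(p_j+1)}h_d\to0$, as stated. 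Finally, for $F_\lambda=\frac{2\lambda_n(H_0)\operatorname{tr}(\bm{D})}{n\,\operatorname{tr}(\bm{C})}$, I would normalize numerator and denominator by their traces, so that $\bm{y}^T\bm{C}\bm{y}/(\sigma^2\operatorname{tr}(\bm{C}))$ and $\bm{y}^T\bm{D}\bm{y}/(\sigma^2\operatorname{tr}(\bm{D}))$ each behave like a chi-square divided by its degrees of freedom; the asymptotic orthogonality $\bm{C}\bm{D}\approx\bm{0}$ delivers their asymptotic independence, yielding the $F_{\operatorname{tr}(C),\operatorname{tr}(D)}$ limit.
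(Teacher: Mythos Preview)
Your proposal is correct and follows essentially the same route as the paper's proof: decompose $RSS_0-RSS_1=\bm{y}^T\bm{C}\bm{y}$ into $\bm{\epsilon}^T\bm{C}\bm{\epsilon}+d_{1n}$, bound $d_{1n}$ via the bias orders of simplified smooth backfitting (the paper isolates this as Lemma~\ref{lem:d1n}), split $\bm{\epsilon}^T\bm{C}\bm{\epsilon}$ into diagonal and off-diagonal sums, apply de~Jong's CLT to the off-diagonal part, show $n^{-1}RSS_1\to\sigma^2$, and deduce the chi-square and $F$ limits from asymptotic idempotence and orthogonality of $\bm{C}$ and $\bm{D}$. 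The only cosmetic difference is that you bound the cross term $2\bm{\mu}^T\bm{C}\bm{\epsilon}$ through its conditional variance and $\|\bm{C}\|=O(1)$, whereas the paper reaches the same order by directly expanding $(\bm{I}-\bm{W})\bm{m}_+$ and $(\bm{I}-\bm{W})\bm{\epsilon}$; both yield the stated $O_p(\sum_j\sqrt{n}h_j^{2(p_j+1)})$.
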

  Theorem \ref{thm:glr1} gives the asymptotic null distribution of the GLR test statistic for the testing problem (\ref{eqn:h1}) under $H_0$. In our opinion, the asymptotic expression for $d_{1n}$ is complicated and might not be necessary. 
  
  \begin{remark}
    The factors $r_k$ and $\mu_n$ in Theorem \ref{thm:glr1} do not depend on the nuisance parameters and nuisance functions. Therefore, the GLR test statistic $\lambda_n$ achieves the Wilks phenomenon that its asymptotic distribution does not depend on nuisance parameters and nuisance functions. Theorem \ref{thm:glr1} is different from Theorem 1 of \citet{fan2005nonparametric} because it uses simplified smooth backfitting instead of backfitting for estimation of additive components.
  \end{remark}
  
  \begin{remark}
    Theorem \ref{thm:glr1} shows that the bias  $d_{1n}$ is negligible under the condition $C_1: nh_j^{4(p_j+1)}h_d \rightarrow 0$ which is different from the condition $C_2: nh_j^{2(p_j+1)}h_d \rightarrow 0$ in Theorem 1 of \citet{fan2005nonparametric}. Suppose $h_j^{p_j+1}=O(h_d^{p_d+1})$, then the proposed GLR test achieves Wilks phenomenon for the bandwidths of order $h_j\sim n^{-1/(2p_j+3)}$ for odd $p_j$, which are the optimal bandwidths used for estimation  in  \citet{fan2005nonparametric}, while their GLR test statistic  does not. To see this, consider $p_j=1$, then $h_{opt} \sim n^{-1/5}$, the first condition $C_1: nh^{9} \sim n n^{-9/5} =n^{-4/5} =o(1)$ holds where as the second condition $C_2: nh^5 \sim nn^{-5/5}=O(1)$ does not hold. 
  \end{remark}
  


  We now derive the asymptotic null distribution of the LF test statistic. Let $\bm{e}_k=(e_{1k},\ldots,e_{nk})^T$ where $e_{ij}$ is the $(i,j)$th, $1 \le i,j \le n$, element of $\bm{E}$ defined in (\ref{eqn:emat}).
  Define
  \begin{align*}
  \nu_n = E(\text{tr}(\bm{E}^T\bm{E})) &=E(\sum_{i=1}^n \bm{e}_{i}^T \bm{e}_i) \\ &=  \frac{| \Omega_d |}{h_d} \int \bigg\{ \sum_{l=0}^{p_d}\sum_{m=0}^{p_d} (K_l*K_m)(u) (-1)^m s^{(m+1),(l+1)} \bigg\}^2 du + o_p(h_d^{-1}), \\
  \delta_{n}^2 = \sum_{j \neq j'}^n (\bm{e}_j^T\bm{e}_{j'})^2 &= \frac{| \Omega_d |}{h_d} \int \bigg\{ \int \bigg[ \sum_{l=0}^{p_d} \sum_{m=0}^{p_d} (K_l * K_m) (u+v) (-1)^m s^{(m+1),(l+1)} \bigg] \\ & \qquad \times \bigg[ \sum_{l=0}^{p_d} \sum_{m=0}^{p_d} (K_l * K_m) (v) (-1)^m s^{(m+1),(l+1)} \bigg] dv \bigg\}^2 du + o_p(h_d^{-1}),\\
  \text{ and } s_k &= \frac{2\nu_n}{\delta_{n}^2}.
  \end{align*}
  Denote $M=d''(0)/2$ where $d(\cdot)$ is the loss function given in Section \ref{subsubsec:lft}.
  The following theorem describes the Wilks type of result for the LF test statistic conditional on the sample space $\mathcal{X}$. 

  \begin{theorem}\label{thm:loss1} (LF test)
    Suppose that conditions \ref{as:1-x}--\ref{as:5-d} hold and $0 \le p_j \le 3$, $j=1,\ldots,d$. Then, under
    $H_0$ for the testing problem (\ref{eqn:h1})
    \begin{align}
    P\left\{ \delta_n^{-1} \left(  \frac{q_n(H_0)}{M}-\nu_n-\frac{1}{\sigma^2}b_{1n} \right) < t |\mathcal{X} \right\} \xrightarrow{d} \bm{\Phi}(t),
    \end{align}
    where  $b_{1n}=O_p\left(1+\sum_{j=1}^d nh_j^{4(p_j+1)}\right)$. Furthermore, if $nh_j^{4(p_j+1)}h_d \rightarrow 0$ for $j=1,\ldots,d$, conditional on $\mathcal{X}$, 
    $ s_k M^{-1}q_n(H_0) \xrightarrow{} \chi^2_{s_k\nu_n}$ as $n \rightarrow \infty$.
    Similarly,
    \begin{align}
    F_q &=  \frac{q_n(H_0)}{Mn} \frac{\text{tr}(\bm{D})}{\text{tr}(\bm{E}^T\bm{E})} \xrightarrow{} F_{\text{tr}(\bm{E}^T\bm{E}),\text{tr}(\bm{D})},
    \end{align}
    as $n \rightarrow \infty$.	
  \end{theorem}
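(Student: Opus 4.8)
The plan is to mirror the development of Theorem~\ref{thm:glr1}, with the quadratic form $\bm{y}^T\bm{E}^T\bm{E}\bm{y}$ taking the place that $\bm{y}^T\bm{C}\bm{y}$ plays there. First I would reduce $q_n(H_0)/M$ to a ratio of quadratic forms. By Assumption~\ref{as:5-d}, a second-order Taylor expansion of $d(\cdot)$ about $0$, using $d(0)=d'(0)=0$ and $M=d''(0)/2$, gives
\begin{align*}
Q_n = M\sum_{i=1}^n\big(\widehat{m}_+(X_{i1},\ldots,X_{id})-\widetilde{m}_+^{(-d)}(X_{i1},\ldots,X_{i(d-1)})\big)^2 + R .
\end{align*}
Following the argument indicated for Lemma~\ref{lem:an12}, I would show that the vector of fitted-value differences equals $\bm{E}\bm{y}$ up to terms of order $O(n^{-1}h_d^{-1}\bm{I}+n^{-1}\bm{J})$, so that $Q_n/M=\bm{y}^T\bm{E}^T\bm{E}\bm{y}+R/M$, and control $R$ with the Lipschitz bound $|d''(z)-d''(0)|\le C|z|$ together with the uniform consistency of the simplified smooth backfitting estimators, establishing that $R$ is of smaller order than the $\delta_n$-scaled fluctuations and so does not affect the limit. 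Combined with $RSS_1=\bm{y}^T\bm{D}\bm{y}$, this yields $q_n(H_0)/M=\bm{y}^T\bm{E}^T\bm{E}\bm{y}/(n^{-1}\bm{y}^T\bm{D}\bm{y})$ up to negligible order.

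Next, under $H_0$ I would write $\bm{y}=\bm{\mu}+\bm{\epsilon}$ with $\bm{\mu}$ the mean vector having $m_d\equiv 0$, and split
\begin{align*}
\bm{y}^T\bm{E}^T\bm{E}\bm{y}=\bm{\epsilon}^T\bm{E}^T\bm{E}\bm{\epsilon}+2\bm{\mu}^T\bm{E}^T\bm{E}\bm{\epsilon}+\bm{\mu}^T\bm{E}^T\bm{E}\bm{\mu}.
\end{align*}
The deterministic term $\sigma^{-2}\bm{\mu}^T\bm{E}^T\bm{E}\bm{\mu}$ supplies the bias $b_{1n}$; using Assumption~\ref{as:3-m} and the smoother bias orders recorded in Section~\ref{subsubsec:simple-sb}, I would verify $b_{1n}=O_p(1+\sum_{j}nh_j^{4(p_j+1)})$ and that the cross term is of smaller stochastic order. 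Note that, unlike the GLR statistic, the LF statistic is a single quadratic form, so its bias enters only through this deterministic term and $b_{1n}$ carries no $\sum_j\sqrt{n}\,h_j^{2(p_j+1)}$ contribution, which explains why $b_{1n}$ is cleaner than $d_{1n}$. For the leading stochastic term I would compute the conditional moments $E[\bm{\epsilon}^T\bm{E}^T\bm{E}\bm{\epsilon}\mid\mathcal{X}]=\sigma^2\,\text{tr}(\bm{E}^T\bm{E})$, which identifies $\nu_n=E(\text{tr}(\bm{E}^T\bm{E}))$, and the conditional variance, whose leading off-diagonal part is proportional to $\sigma^4\sum_{j\neq j'}(\bm{e}_j^T\bm{e}_{j'})^2=\sigma^4\delta_n^2$, with diagonal and fourth-cumulant contributions negligible since $\sum_j(\bm{e}_j^T\bm{e}_j)^2=O(n^{-1}h_d^{-2})=o(\delta_n^2)$ under Assumption~\ref{as:2-ker-h}. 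A central limit theorem for quadratic forms in independent errors with finite fourth moment (Assumption~\ref{as:4-e}) then gives $\delta_n^{-1}(\sigma^{-2}\bm{\epsilon}^T\bm{E}^T\bm{E}\bm{\epsilon}-\nu_n)\xrightarrow{d}\bm{\Phi}$.

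Then for the denominator I would prove $n^{-1}RSS_1=n^{-1}\bm{y}^T\bm{D}\bm{y}\xrightarrow{p}\sigma^2$, so by Slutsky's theorem the normalization by $n^{-1}RSS_1$ replaces $\sigma^2$ in the limit and delivers the stated Gaussian approximation for $\delta_n^{-1}(q_n(H_0)/M-\nu_n-\sigma^{-2}b_{1n})$. The rescaled chi-square statement follows by matching the first two cumulants: with $s_k=2\nu_n/\delta_n^2$, the variable $s_kM^{-1}q_n(H_0)$ has asymptotic mean $s_k\nu_n$ and a variance matching that of $\chi^2_{s_k\nu_n}$, the normal approximation being valid to leading order once the undersmoothing-free condition $nh_j^{4(p_j+1)}h_d\to 0$ renders $b_{1n}$ negligible relative to $\delta_n$. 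The $F_q$ limit in~(\ref{eqn:f-q}) is obtained by treating the numerator $\text{tr}(\bm{E}^T\bm{E})^{-1}\bm{y}^T\bm{E}^T\bm{E}\bm{y}$ and the denominator $\text{tr}(\bm{D})^{-1}\bm{y}^T\bm{D}\bm{y}$ as asymptotically independent ratios of quadratic forms to their degrees of freedom, each close to unity, so that their ratio is asymptotically $F_{\text{tr}(\bm{E}^T\bm{E}),\text{tr}(\bm{D})}$.

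The hard part will be the two moment evaluations that produce the closed forms for $\nu_n$ and $\delta_n^2$. These require the limiting entrywise behavior of the boundary-corrected double-smoothing matrix $\bm{H}_{p_d,d}^*$ of~(\ref{eqn:back-hdef1})--(\ref{eqn:back-fit}): after inserting the local-polynomial weights and integrating over the grid variable, the inner products $\bm{e}_j^T\bm{e}_{j'}$ and the trace collapse to convolutions $(K_l*K_m)$ weighted by the entries $s^{(m+1),(l+1)}$ of $\bm{S}^{-1}$, which is exactly the origin of the displayed expressions. The other delicate point is verifying the Lindeberg--Lyapunov condition for the quadratic-form CLT under only a finite fourth moment, rather than Gaussian errors, where one must confirm that the eigenstructure of $\bm{E}^T\bm{E}$ is sufficiently spread out, with no single coordinate dominating, so that the normalized sum is asymptotically Gaussian. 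I expect both estimates to run in parallel to the corresponding steps in the proof of Theorem~\ref{thm:glr1} with $\bm{E}^T\bm{E}$ substituted for $\bm{C}$, so that the LF test inherits the Wilks phenomenon and the undersmoothing-free bandwidth condition established there.
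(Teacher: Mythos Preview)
Your proposal is correct and follows essentially the same route as the paper's proof: Taylor-expand the loss to reduce $Q_n$ to $M\bm{y}^T\bm{E}^T\bm{E}\bm{y}$ plus a cubic remainder, split the quadratic form via $\bm{y}=\bm{m}_++\bm{\epsilon}$, read off $b_{1n}$ from the $\bm{m}_+$ part, apply a quadratic-form CLT to the $\bm{\epsilon}$ part after separating diagonal ($\nu_n$) and off-diagonal ($\delta_n^2$) contributions, and use $n^{-1}RSS_1\to\sigma^2$ for the denominator. The only cosmetic differences are that the paper bounds the remainder explicitly via $E|x|^3\le(E|x|^4)^{3/4}$ to get $\sum_iR_i=O_p(h_d^{-1}(nh_d)^{-1/2})$, invokes Proposition~3.2 of \citet{de1987central} for the CLT rather than a generic Lindeberg--Lyapunov argument, and for the $F_q$ limit argues that $\bm{E}^T\bm{E}$ and $\bm{D}$ are asymptotically idempotent and asymptotically orthogonal (in the sense of \citet{hua08analysis}) rather than appealing to asymptotic independence of the two ratios.
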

  \begin{remark}
    Theorem \ref{thm:loss1} shows that the factors $s_k$ and $v_n$ do not depend on the nuisance parameters and nuisance functions. Therefore,  like the GLR statistic, the LF test statistic also enjoys Wilks phenomenon that its asymptotic distribution does not depend on nuisance parameters and nuisance function. Further, since Wilks phenomenon is achieved for optimal bandwidths $h_j\sim n^{-1/(2p_j+3)}$, for odd $p_j$, $j=1,\ldots,d$, undersmoothing may not be necessary. 
  \end{remark} 

  \begin{remark}
   The LF test statistic includes an extra scaling constant $M$, which is the curvature of the loss function. When $M$ is correctly specified, the asymptotic distribution of the scaled statistic does not depend on it. However, the choice of $M$ is irrelevant if the conditional bootstrap method (Supplementary Material) is used to simulate the null distribution. We further validate this using simulations in Section \ref{sec:simul}. We also provide more discussion on the efficiency of loss functions in Theorem \ref{thm:glr-lft-h1}.
  \end{remark}
  

  Unlike GLR test statistic, the LF test statistic includes only the second-order term in the Taylor expansion because the first-order term vanishes to 0 under $H_0$. For univariate nonparametric model (\ref{eqn:int-model}) with $d=1$, \citet{hong2013loss} argue that not having first-order term could be one reason for LF test to be asymptotically more powerful than GLR test. In Theorem \ref{thm:are} we show that similar result holds for the nonparametric additive model (\ref{eqn:int-model}). We now discuss the optimal power properties of the proposed test statistics in the following section.


  \subsection{Power of GLR and LF Tests}
  
  We consider the framework of \citet{fan2001generalized} and \citet{fan2005nonparametric} to study the power of GLR and LF tests. Assume that $h_d=o(n^{-1/(4p_d+5)})$, so that the second term in both $d_{1n}$ and $b_{1n}$ is of smaller order than $\sigma_n$ and $\delta_n$, respectively. We note that the optimal bandwidth for the testing problem (\ref{eqn:h1}) is $h_d=O(n^{-2/(8p_d+9)})$ (to be shown later in Theorem \ref{thm:opt-test}), which satisfies the condition $h_d=o(n^{-1/(4p_d+5)})$. 
  Under these assumptions, Theorems \ref{thm:glr1} and \ref{thm:loss1} lead to
  the following  approximate level $\alpha$ tests for GLR and  LF test statistics, respectively
  \begin{align*}
    \phi_{\lambda_n} = I\{\lambda_n(H_0)-\mu_n \ge z_{\alpha}\sigma_{n}\},\qquad
    \phi_{q_n} = I\{\frac{q_n(H_0)}{M}-\nu_n \ge z_{\alpha}\delta_{n}\}.
  \end{align*}
  Let $\mathcal{M}_n$ be a class of functions such that any $M_n \in \mathcal{M}_n$ satisfy the following regularity conditions as stated in \citet{fan2005nonparametric}:
  \begin{align}
    \begin{split}
    \text{var}(M_n^2(X_d))  \le K(E[M_n^2(X_d)])^2, \qquad
    nE[M_n^2(X_d)] > K_n \rightarrow \infty,
    \end{split}
    \end{align} 
    for some constants $K > 0$ and $K_n \rightarrow \infty$.
  Let $\eta= 2(p_d+1)$ with $0 \le p_d \le 3$. Define a class of functions,
  \begin{align}
    \mathcal{M}_n(\rho; \eta) =\big\{M_n \in \mathcal{M}_n: &E[M_n^2(X_d)]\ge \rho^2,  E[\nabla^{r}M_n(X_d)]^2 \le R_*^2 \text{ with } r \le \eta \big\},
    \label{eqn:class-h1}
  \end{align}
  for a given $\rho > 0$, where $\nabla^{r}M_n(X_d)$ is the $r$th derivative of $M_n$ and $R_*$ is some positive constant.  
  Consider the contiguous alternative of the form
  \begin{align}
    H_{1n}:m_d(X_d) &= M_n(X_d),
    \label{eqn:h1a}
  \end{align}
  where $M_n(X_d) \rightarrow 0$ and $M_n \in \mathcal{M}_n(\rho; \eta)$.
  
  The following theorem is useful to approximate the power of  GLR and LF tests under the contiguous alternative (\ref{eqn:h1a}).

  \begin{theorem} \label{thm:glr-lft-h1}
    Suppose $E\{M_n(X_d)|X_1,\ldots,X_{d-1}\}=0$ and $h_d\cdot\sum_{i=1}^n M_n^2(X_{id}) \xrightarrow{P} C_M$ for some constant $C_M$. Suppose $0 \le p_j \le 3$, for $j=1,\ldots,d$.
    \begin{enumerate}[label=(\roman*)]
      \item{[GLR test]} Suppose that conditions \ref{as:1-x}-\ref{as:4-e} hold. Under $H_{1n}$ for the testing problem (\ref{eqn:h1}),
      \begin{align*}
      P\left\{\sigma_{n}^{-1}\left(\lambda_n(H_0)-\mu_n-\frac{d_{1n}+d_{2n}}{2\sigma^2}\right)<t|\mathcal{X}\right\} \xrightarrow{d} \bm{\Phi}(t),
      \end{align*}
      where $\mu_n$, $d_{1n}$ and $\sigma_n$ are same as those in Theorem \ref{thm:glr1} and
      \begin{align*}
      d_{2n} &= \sum_{i=1}^n M_n^2(X_{id}) (1+o_p(1)).
      \end{align*}
      \item{[LF test]} Suppose that conditions \ref{as:1-x}-\ref{as:5-d} hold. Under $H_{1n}$ for the testing problem (\ref{eqn:h1}),
      \begin{align*}
      P\left\{\delta_{n}^{-1}\left(\frac{q_n(H_0)}{M}-\nu_n-\frac{b_{1n}+b_{2n}}{\sigma^2}\right)<t|\mathcal{X}\right\} \xrightarrow{d} \bm{\Phi}(t),
      \end{align*}
      where $\nu_n$, $b_{1n}$ and $\delta_{n}$ are same as those in Theorem \ref{thm:loss1} and 
      \begin{align*}
      b_{2n} &= \sum_{i=1}^n M_n^2(X_{id}) (1+o_p(1)). 
      \end{align*} 
    \end{enumerate}
    
  \end{theorem}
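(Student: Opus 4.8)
The plan is to reduce both statistics to quadratic forms in the error vector and then isolate the contribution of the local alternative. Write $\bm{y} = \bm{\mu}_0 + \bm{m}_d + \bm{\epsilon}$, where $\bm{\mu}_0 = \alpha_0\bm{1}+\sum_{j=1}^{d-1}\bm{m}_j$ collects the mean present under $H_0$, $\bm{m}_d = (M_n(X_{1d}),\ldots,M_n(X_{nd}))^T$ is the local alternative component under $H_{1n}$, and $\bm{\epsilon}$ is the error vector. By the representations underlying Lemma \ref{lem:an12}, $\lambda_n(H_0) = \tfrac{n}{2}\,\bm{y}^T\bm{C}\bm{y}/(\bm{y}^T\bm{D}\bm{y})$ and $q_n(H_0)/M \approx \bm{y}^T\bm{E}^T\bm{E}\bm{y}/(n^{-1}RSS_1)$. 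Since $\sum_i M_n^2(X_{id}) = O_p(h_d^{-1}) = o_p(n)$ under the scaling $h_d\sum_i M_n^2(X_{id}) \xrightarrow{P} C_M$, the signal adds only $o_p(n)$ to the residual sum of squares, so $\bm{y}^T\bm{D}\bm{y} = RSS_1 = n\sigma^2(1+o_p(1))$ exactly as in the proofs of Theorems \ref{thm:glr1} and \ref{thm:loss1}; the resulting multiplicative $1+o_p(1)$ is of order $(nh_d)^{-1/2}$ after normalization by $\sigma_n$ (respectively $\delta_n$) and hence negligible. It therefore suffices to analyze the numerators $\bm{y}^T\bm{C}\bm{y}$ and $\bm{y}^T\bm{E}^T\bm{E}\bm{y}$.

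For the GLR statistic I would expand $\bm{y}^T\bm{C}\bm{y} = \bm{\epsilon}^T\bm{C}\bm{\epsilon} + (2\bm{\epsilon}^T\bm{C}\bm{\mu}_0 + \bm{\mu}_0^T\bm{C}\bm{\mu}_0) + (2\bm{\epsilon}^T\bm{C}\bm{m}_d + 2\bm{\mu}_0^T\bm{C}\bm{m}_d + \bm{m}_d^T\bm{C}\bm{m}_d)$. The first parenthesis is, by definition, the quantity $d_{1n}$ of Theorem \ref{thm:glr1}, and the pure quadratic form $\bm{\epsilon}^T\bm{C}\bm{\epsilon}$ supplies the asymptotic $N(\mu_n,\sigma_n^2)$ behavior verbatim from that proof (conditional on $\mathcal{X}$). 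The entire second parenthesis I would define to be $d_{2n}$, so that the claim collapses to the single estimate $d_{2n} = \sum_i M_n^2(X_{id})(1+o_p(1))$: the signal quadratic form must dominate while the two cross terms are of smaller order.

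The crucial algebraic fact is $\bm{C}\bm{m}_d = \bm{m}_d(1+o_p(1))$. Since $\bm{H}_{p_d,d}^*$ is symmetric and asymptotically idempotent \citep{hua08analysis,hua14local} and $M_n$ is smooth, $\bm{H}_{p_d,d}^*\bm{m}_d = \bm{m}_d + O_p(h_d^{2(p_d+1)})$ coordinatewise; substituting into (\ref{eqn:cmat}) the three smoother terms $\bm{G}^\perp\bm{H}_{p_d,d}^* + \bm{H}_{p_d,d}^*\bm{G}^\perp - \bm{G}^\perp\bm{H}_{p_d,d}^*\bm{H}_{p_d,d}^*\bm{G}^\perp$ telescope to the nonparametric part of $\bm{m}_d$, while the projection $\bm{P}_{\bm{G}_{[-d]}^\perp\mathbb{X}_d^{[-0]}}$ restores exactly the low-order polynomial-in-$x_d$ part that $\bm{G}^\perp$ annihilates, giving $\bm{C}\bm{m}_d = \bm{m}_d(1+o_p(1))$ and hence $\bm{m}_d^T\bm{C}\bm{m}_d = \|\bm{m}_d\|^2(1+o_p(1)) = \sum_i M_n^2(X_{id})(1+o_p(1))$. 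For the error cross term, $2\bm{\epsilon}^T\bm{C}\bm{m}_d$ has $\mathrm{Var}(\cdot\mid\mathcal{X}) = 4\sigma^2\bm{m}_d^T\bm{C}^2\bm{m}_d = 4\sigma^2\|\bm{m}_d\|^2(1+o_p(1)) = O_p(h_d^{-1})$, so it is $O_p(h_d^{-1/2}) = o_p(\|\bm{m}_d\|^2)$. For the nuisance cross term, $2\bm{\mu}_0^T\bm{C}\bm{m}_d = 2\sum_i\mu_0(X_i)M_n(X_{id})(1+o_p(1))$, whose summands are mean-zero precisely because $E\{M_n(X_d)\mid X_1,\ldots,X_{d-1}\}=0$, with variance of order $nE[\mu_0^2 M_n^2]=O(h_d^{-1})$; it is again $o_p(\|\bm{m}_d\|^2)$. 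Summing the three gives $d_{2n}=\sum_i M_n^2(X_{id})(1+o_p(1))$, and combining with the first paragraph proves part (i).

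Part (ii) is identical after replacing $\bm{C}$ by $\bm{E}^T\bm{E}$: expanding $\bm{y}^T\bm{E}^T\bm{E}\bm{y}=\|\bm{E}\bm{y}\|^2$ separates the null contribution (identified with $\nu_n$, $\delta_n$ and $b_{1n}$ in Theorem \ref{thm:loss1}) from $b_{2n}=2\bm{\epsilon}^T\bm{E}^T\bm{E}\bm{m}_d + 2\bm{\mu}_0^T\bm{E}^T\bm{E}\bm{m}_d + \|\bm{E}\bm{m}_d\|^2$. Since $\bm{E}\bm{m}_d = \bm{H}_{p_d,d}^*\bm{G}^\perp\bm{m}_d + \bm{P}_{\bm{G}_{[-d]}^\perp\mathbb{X}_d^{[-0]}}\bm{m}_d = \bm{m}_d(1+o_p(1))$ by the same reasoning, $\|\bm{E}\bm{m}_d\|^2=\sum_i M_n^2(X_{id})(1+o_p(1))$ and the cross terms are $o_p$ of it, giving $b_{2n}=\sum_i M_n^2(X_{id})(1+o_p(1))$. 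One also needs the Taylor remainder $R$ in (\ref{eqn:lft}) to be negligible: because $\widehat{m}_+-\widetilde{m}_+^{(-d)}=O_p(\|\bm{m}_d\|_\infty)\to 0$ and $|d''(z)-d''(0)|\le C|z|$ by Assumption \ref{as:5-d}, $R=o_p(\sum_i M_n^2(X_{id}))$ and is absorbed into $b_{2n}$. I expect the main obstacle to be the nuisance cross term $\bm{\mu}_0^T\bm{C}\bm{m}_d$ (and its LF analogue): because $\bm{C}$ entangles the $d$th smoother with the projections onto the other covariates, showing that $E\{M_n(X_d)\mid X_{-d}\}=0$ genuinely forces this term to be $o_p(\|\bm{m}_d\|^2)$ requires careful accounting of how re-estimating $m_1,\ldots,m_{d-1}$ under $H_1$ (versus $H_0$) interacts with the alternative, controlled through the asymptotic-projection property of the $\bm{H}_{p_j,j}^*$ \citep{huang2018} and the remainder matrices $O(n^{-1}h_j^{-1}\bm{I}+n^{-1}\bm{J})$ carried in (\ref{eqn:cmat})--(\ref{eqn:emat}).
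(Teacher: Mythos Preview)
Your proposal is correct and follows essentially the same route as the paper: expand the numerator quadratic form, identify the pure-error piece with the null-case quantities $(\mu_n,\sigma_n)$ (respectively $(\nu_n,\delta_n)$), identify the nuisance-mean piece with $d_{1n}$ (respectively $b_{1n}$), and show the remaining signal block reduces to $\sum_i M_n^2(X_{id})(1+o_p(1))$ because the cross terms are $o_p(\|\bm{m}_d\|^2)$ via the conditional mean-zero hypothesis. The only cosmetic difference is that the paper applies the residual operators $(\bm{I}-\bm{W})$ and $(\bm{I}-\bm{W}^{[-d]})$ to $\bm{m}_+$ directly (so the key identity appears as $(\bm{I}-\bm{W}^{[-d]})\bm{m}_+\approx\bm{M}_n$), whereas you work through the reduced matrices $\bm{C}$ and $\bm{E}$ and the equivalent identity $\bm{C}\bm{m}_d\approx\bm{m}_d$; the substance is the same.
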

  
  Theorem \ref{thm:glr-lft-h1} shows that when $nh_j^{4(p_j+1)}h_d \rightarrow 0$, $j=1,\ldots,d$, the alternative distributions are independent of the nuisance functions $m_j(x_j)$, $j \neq d$, and this helps us to compute the power of the tests via simulations over a large range of bandwidths with nuisance functions fixed at their estimated values.
  
  It is interesting to note that the noncentrality parameters in part (ii) of Theorem \ref{thm:glr-lft-h1} are independent of the curvature parameter $M=d''(0)/2$ of the loss function $d(\cdot)$. This implies that, as discussed in \citet{hong2013loss}, all loss functions satisfying Assumption \ref{as:5-d} are asymptotically equally efficient under $H_1$ in terms of Pitman's efficiency criterion [\citet{pitman2018some}, Chapter 7].
  
  The maximum of the probabilities of  type II errors is
  \begin{align}
    \beta_{\lambda_n}(\alpha,\rho) = \underset{M_n \in \mathcal{M}_n(\rho;\eta)}{\sup} \beta_{\lambda_n}(\alpha,M_n), \qquad \beta_{q_n}(\alpha,\rho) = \underset{M_n \in \mathcal{M}_n(\rho; \eta)}{\sup} \beta_{q_n}(\alpha,M_n),
  \end{align}
  where $\beta_{\lambda_n}(\alpha,M_n)=P(\phi_{\lambda_n}=0|m_d=M_n)$  and $\beta_{q_n}(\alpha,M_n)=P(\phi_{q_n}=0|m_d=M_n)$ are the probabilities of  type II
  errors at the alternative $H_{1n}:m_d=M_n$. Use $\beta(\alpha,\rho)$ to denote either $\beta_{\lambda_n}(\alpha,\rho)$ or $\beta_{q_n}(\alpha,\rho)$ and $\phi$ to denote $\phi_{\lambda_n}$ or $\phi_{q_n}$. As mentioned in \citet{fan2001generalized} and \citet{fan2005nonparametric}, the minimax rate of $\phi_{\lambda_n}$ or $\phi_{q_n}$ is defined as the smallest $\rho_n$ such that:
  \begin{enumerate}[label=(\alph*)]
    \item for every $\rho > \rho_n$, $\alpha > 0$, and for any $\beta > 0$, there exists a constant $c$ such that $\beta(\alpha,c\rho) \le \beta+o(1)$, and
    \item for any sequence $\rho_n^*=o(\rho_n)$, there exists $\alpha>0$ and $\beta>0$ such that for any $c>0$, $P(\phi=1|m_d=M_n)=\alpha + o(1)$ and $\lim \inf_n \beta(\alpha, c\rho_n^*) > \beta$.
  \end{enumerate}
  
  The following theorem provides the rate with which  the alternatives can be detected by GLR $(\phi_{\lambda_n})$ and LF  $(\phi_{q_n})$ tests. The convergence rate depends on bandwidth.

  \begin{theopargself}
  \begin{theorem}\label{thm:opt-test}
    Under conditions \ref{as:1-x}-\ref{as:5-d}, if
    $h_k^{2(p_k+1)}=O(h_d^{2(p_d+1)})$ and $0 \le p_k \le 3$, for $k=1,\ldots,d-1$, then for the testing
    problem (\ref{eqn:h1}), both GLR and LF tests can detect alternatives
    with rate  $\rho_n=n^{-\frac{4(p_d+1)}{8p_d+9}}$ when $h_d=c_*n^{-\frac{2}{8p_d+9}}$ for some
    constant $c_*$. 
  \end{theorem}
\end{theopargself}
  \begin{remark}
    For the class of alternatives $\mathcal{M}_n(\rho;\eta)$ in (\ref{eqn:class-h1}), the rate of convergence for nonparametric hypothesis testing  according to the formulations of \citet{ingster1993asymptotically} and \citet{spokoiny1996adaptive} is $n^{-\frac{2\eta}{4\eta+1}}$ where $\eta$ is the smoothness parameter. Since $\eta=2(p_d+1)$ in this study, the GLR and LF tests are asymptotically optimal based on their rates given in Theorem \ref{thm:opt-test}. Our rates are different from the rates in Theorem 5 of  \citet{fan2005nonparametric} because of different smoothness parameter $\eta=(p_d+1)$ considered in their study. For this reason, the optimal bandwidth for testing in our study $n^{-\frac{2}{8p_d+9}}$ which is also different from  $n^{-\frac{2}{4p_d+5}}$ in \citet{fan2005nonparametric}. 
  \end{remark}
  
  \begin{remark}
    Based on  Theorems \ref{thm:glr1} and \ref{thm:loss1}, the assumption on bandwidths $nh_j^{4(p_j+1)}h_d=o(1)$, $j=1,\ldots,d$, is required to ensure Wilks property for both GLR and LF tests. This is true for a collection of bandwidths $h_j \in (0,n^{-\frac{1}{4p_j+4}}]$ which includes the optimal bandwidths  $n^{-\frac{1}{2p_j+3}}$ used in backfitting \citep{fan2005nonparametric}. With our method, the bandwidths well suited for curve estimation might also be useful for testing. 
  \end{remark}
  We now show that the LF test is asymptotically more powerful than the GLR test. For ease of exposition, we assume that $p_j=0$, $j=1,\ldots,d$. Without loss of generality, let $M_n(x_d)=n^{-1/2}h_d^{-1/2}g(x_d)$ which satisfy the condition in Theorem \ref{thm:glr-lft-h1}. We now compare the relative efficiency between the LF test statistic $q_n$ and the GLR test statistic $\lambda_n$ under the class of local alternatives
  \begin{align}
    H_n: m_d(x_d) = n^{-1/2}h_d^{-1/2}g(x_d),
    \label{eqn:h1-spl}
  \end{align} 
  where $E(g(X_d)|X_1,\ldots,X_{d-1})=0$ and $\sum_{i=1}^n g^2(X_{id})=O_p(h_d^{-1})$.
  While Theorem \ref{thm:opt-test} shows that the GLR  and the LF tests achieve optimal rate of convergence in the sense of \citet{ingster1993asymptotically} and \citet{spokoiny1996adaptive}, Theorem \ref{thm:are} provides that under the same set of regularity conditions, the LF test is asymptotically more powerful than the GLR test under $H_n$ in (\ref{eqn:h1-spl}). 

  \begin{theorem}
    \label{thm:are}[Relative efficiency] Suppose Conditions \ref{as:1-x}--\ref{as:5-d} hold, $h \propto n^{-\omega}$ for $\omega \in (0,1/(4p_d+5))$ and $p_j = 0$ for $j=1,\ldots,d$. Then Pitman's relative efficiency of the LF test over the GLR test under  $H_n$ in (\ref{eqn:h1-spl}) is given by
    \begin{align*}
      \text{ARE}&(q_n,\lambda_n)  \\
      &=  \left[ \frac{\int \left\{2(K_0*K_0)(u) -\int (K_0*K_0)(u+v) (K_0*K_0)(v) dv \right\}^2 du}{\int \left\{\int (K_0*K_0)(u+v) (K_0*K_0)(v) dv \right\}^2 du} \right]^{1/(2-3\omega)}.
      \end{align*}
      The asymptotic relative efficiency ARE$(q_n,\lambda_n)$ is larger than 1 for any kernel satisfying Condition \ref{as:2-ker-h} and $K(\cdot) \le 1$.
  \end{theorem}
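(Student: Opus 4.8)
The plan is to start from the asymptotic power expansions already established in Theorem~\ref{thm:glr-lft-h1} specialized to $p_j=0$. Under $H_n$ both standardized statistics are asymptotically $\mathcal{N}(0,1)$, so the level-$\alpha$ tests $\phi_{\lambda_n}$ and $\phi_{q_n}$ have asymptotic powers $\bm{\Phi}(\text{ncp}_\lambda - z_\alpha)$ and $\bm{\Phi}(\text{ncp}_q - z_\alpha)$, where the noncentralities are the mean shifts divided by the null standard deviations,
\[
\text{ncp}_\lambda = \frac{d_{2n}}{2\sigma^2\sigma_n}, \qquad \text{ncp}_q = \frac{b_{2n}}{\sigma^2\delta_n}.
\]
Because Theorem~\ref{thm:glr-lft-h1} gives $d_{2n}=b_{2n}=\sum_i M_n^2(X_{id})(1+o_p(1))$, the two mean shifts coincide and the entire comparison collapses to the ratio of the null scales $\sigma_n$ and $\delta_n$.

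The first concrete step is to evaluate $\sigma_n$ and $\delta_n$ in closed form when $p_d=0$. Here $\bm{S}=(\mu_0)=1$, so $s^{1,1}=1$ and $K_0=K$, and the double sums collapse to the single convolution $W:=K_0*K_0=K*K$. Substituting into the definitions of $\sigma_n^2$ and $\delta_n^2$ yields
\[
\sigma_n^2 = \frac{|\Omega_d|}{4h_d}\,A\,(1+o_p(1)), \qquad \delta_n^2 = \frac{|\Omega_d|}{h_d}\,B\,(1+o_p(1)),
\]
where $A=\int\{2W(u)-(W*W)(u)\}^2\,du$ and $B=\int\{(W*W)(u)\}^2\,du$ are precisely the two integrals in the statement (using that $\int W(u+v)W(v)\,dv=(W*W)(u)$ since $W$ is symmetric). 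Consequently $\text{ncp}_q/\text{ncp}_\lambda = 2\sigma_n/\delta_n = (A/B)^{1/2}$, which is exactly the $\omega\to 0$ value of the claimed efficiency.

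To pass from this scale ratio to Pitman's relative efficiency I would track the dependence of the noncentralities on the sample size through the coupling $h_d\propto n^{-\omega}$. Both $\sigma_n$ and $\delta_n$ are of order $h_d^{-1/2}$, and against the local alternative (\ref{eqn:h1-spl}) the common mean shift has a fixed $n$-rate; writing $\text{ncp}_\lambda=K_\lambda\,n^{\gamma(\omega)}$ and $\text{ncp}_q=K_q\,n^{\gamma(\omega)}$ with a shared exponent $\gamma(\omega)$ and $K_q/K_\lambda=(A/B)^{1/2}$, the sample sizes $n_\lambda,n_q$ delivering a common asymptotic power solve $K_\lambda n_\lambda^{\gamma}=K_q n_q^{\gamma}$, so that $\text{ARE}(q_n,\lambda_n)=n_\lambda/n_q=(K_q/K_\lambda)^{1/\gamma(\omega)}=(A/B)^{1/(2-3\omega)}$. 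The main obstacle is precisely this bookkeeping: one must identify the correct $n$-rate $\gamma(\omega)$ of the detectable alternative under (\ref{eqn:h1-spl}), keeping the nuisance directions inert via $E\{g(X_d)\mid X_1,\dots,X_{d-1}\}=0$, so that both statistics share one rate and differ only through the kernel-dependent constants $K_\lambda,K_q$; it is this coupling, rather than the noncentrality ratio itself, that manufactures the factor $3$ in $2-3\omega$.

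Finally, the positivity $\text{ARE}>1$ reduces to $A>B$, since $\tfrac{1}{2-3\omega}>0$ for $\omega\in(0,1/5)$. Expanding the square gives $A-B=4\int W^2-4\int W\,(W*W)$, so it suffices to show $\int W^2>\int W\,(W*W)$. I would argue this by Plancherel: with $\widehat{W}=\widehat{K}^2$ and $\widehat{W*W}=\widehat{K}^4$ (real because $K$ is symmetric), $\int W^2=\tfrac{1}{2\pi}\int\widehat{K}^4$ and $\int W\,(W*W)=\tfrac{1}{2\pi}\int\widehat{K}^6$, whence
\[
A-B=\frac{2}{\pi}\int \widehat{K}(\xi)^4\bigl(1-\widehat{K}(\xi)^2\bigr)\,d\xi.
\]
Because $K$ is a probability density, $\widehat{K}(0)=1$ and $|\widehat{K}(\xi)|\le 1$ for all $\xi$, so the integrand is nonnegative and strictly positive on the set where $0<\widehat{K}^2<1$, which has positive Lebesgue measure for any nondegenerate kernel satisfying Condition~\ref{as:2-ker-h}. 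Hence $A-B>0$ and $\text{ARE}(q_n,\lambda_n)>1$. The stated hypothesis $K(\cdot)\le 1$ is a convenient sufficient condition (it gives $W=K*K\le 1$), but the Plancherel computation shows the decisive inequality $A>B$ in fact holds under Condition~\ref{as:2-ker-h} alone.
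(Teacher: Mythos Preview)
Your overall architecture matches the paper's: reduce to equating the two noncentrality parameters from Theorem~\ref{thm:glr-lft-h1}, identify the ratio as $2\sigma_n/\delta_n$, and then prove the kernel inequality $A>B$ with $A=\int\{2W-(W*W)\}^2$ and $B=\int(W*W)^2$, $W=K*K$. Two points deserve comment.

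\textbf{The exponent $2-3\omega$.} You correctly flag the bookkeeping as the ``main obstacle'' but do not carry it out, and the sketch you give (a single shared rate $\gamma(\omega)$ with $K_\lambda n_\lambda^\gamma=K_q n_q^\gamma$) hides where the extra $\omega$ enters. The paper's device is to take \emph{two} sample sizes $n_1,n_2$ with $h_{d_i}=cn_i^{-\omega}$ and require the local alternatives $n_i^{-1/2}h_{d_i}^{-1/2}g_i(x_d)$ to coincide asymptotically; this forces $(n_1/n_2)^{1-\omega}=\sum g_1^2/\sum g_2^2$. Equating the noncentralities $\xi=\psi$ gives $\sum g_1^2/\sum g_2^2=2\sigma_{n_1}/\delta_{n_2}$, and because $\sigma_{n_1}^2\propto n_1^{\omega}A/4$ and $\delta_{n_2}^2\propto n_2^{\omega}B$ one gets $(n_1/n_2)^{2(1-\omega)}=(n_1/n_2)^{\omega}(A/B)$, i.e.\ $(n_1/n_2)^{2-3\omega}=A/B$. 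The factor $3$ thus comes from the bandwidth entering \emph{both} the alternative scaling and the null variance; your single-rate formulation obscures this and would need to be made explicit to close the argument.

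\textbf{The inequality $A>B$.} Here you take a genuinely different and cleaner route than the paper. The paper argues via Jensen (using that $W$ is a probability density) that $\|W*W\|_2\le\|W\|_2$, and then the reverse triangle inequality gives $\|2W-W*W\|_2\ge 2\|W\|_2-\|W*W\|_2\ge\|W\|_2\ge\|W*W\|_2$. Your Plancherel computation $A-B=\tfrac{2}{\pi}\int\widehat K^4(1-\widehat K^2)\,d\xi\ge 0$ is more direct, makes strict positivity transparent (since $|\widehat K|\le 1$ with equality only at $\xi=0$ for a nondegenerate compactly supported density), and, as you note, shows that the hypothesis $K\le 1$ is in fact superfluous.
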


\begin{remark}
Theorem \ref{thm:are} shows that the Pitman's relative efficiency of the LF test over the GLR test is larger than 1 for $p_j=0$, $j=1,\ldots,d$, which means that the LF test is asymptotically more efficient than the GLR test. Given the complicated expressions of $\sigma_n$ and $\delta_n$, the extension of Theorem \ref{thm:are}  to general $p_j=1,2,3$, is not straightforward. We defer this for future research.  
\end{remark}

  \begin{remark}
  The result in Theorem \ref{thm:are} does not imply that the GLR test is not useful. The GLR test is a natural extension of classical Likelihood Ratio test with many desirable features and has been widely used in the literature. As stated in \citet{hong2013loss}, same bandwidths and same kernel functions $K(\cdot)$ are required for the relative efficiency of $q_n$ over $\lambda_n$ to hold. Therefore, it might be possible that both test statistics achieve similar efficiencies under different bandwidths and kernel functions. In our simulations in Section \ref{sec:simul}, we observe that the statistic $q_n$  achieves larger powers than the statistic $\lambda_n$.   
\end{remark}

  \begin{remark}
    The result in Theorem \ref{thm:are} is new to the literature. While Theorem 4 in \citet{hong2013loss} discuss the asymptotic relative efficiency of $q_n$ over $\lambda_n$  for  Nadaraya-Watson estimator in an univariate model, the proposed Theorem \ref{thm:are} discuss the relative efficiency for similar type of estimators using $\bm{H}_{0,j}^{*}$, $j=1,\ldots,d$, in additive models.       
  \end{remark}

\section{Numerical Comparison of GLR and LF Tests}\label{sec:numeric}
In this section, we evaluate the performance of GLR and LF tests in finite samples. Using simulations, we demonstrate the Wilks phenomenon and examine the effect of error distribution on the performances of  GLR and  LF tests. Local linear smoothing with Gaussian kernel is considered in all the simulations. We use software \texttt{Julia} \citep{bezanson2017julia} to carry out simulations and data analysis. We also illustrate the usefulness of the proposed statistics using Boston housing data. Additional results are presented in Section S3.2 of supplementary Material. 

\subsection{Simulations} \label{sec:simul}

We mimic the simulation designs in \citet{fan2005nonparametric} and \citet{huang2018}. Consider the additive model,
\begin{align}
  Y= m_1(X_1) + m_2(X_2) + m_3(X_3) + m_4(X_4) + \epsilon,
  \label{eq:sim-model}
\end{align}
where $m_1(X_1)= 0.5-X_1^2+3X_1^3$, $m_2(X_2)=\sin(\pi X_2)$, $m_3(X_3)=X_3(1-X_3)$, $m_4(X_4)=\exp(2X_4-1)$, and  $\epsilon$ is distributed as $\mathcal{N}(0,1)$. For the covariates $X_1$, $X_2$, $X_3$ and $X_4$, we first simulate normally distributed random variables $Z_1$, $Z_2$, $Z_3$ and $Z_4$ with mean $[0, 0,0,0]$ and covariance $0.4\bm{I}_4+0.6\bm{1}\bm{1}^T$,  and project them back on to $[-1, 1]$ using the transformation $X_i= 2\tan^{-1}(Z_i)/\pi$, $i=1,2,3,4$. 

We consider the null hypothesis $H_0: m_2(x_2)=0$ and treat $m_1(x_1)$, $m_3(x_3)$ and $m_4(x_4)$ as nuisance functions. For comparison purpose, we implement the GLR test in \citet{fan2005nonparametric} which is based on classical backfitting; hereafter, denoted as GLR(FJ) and the corresponding test statistic as $\lambda_n(FJ)$. We also implement the backfitting based test (\ref{eqn:sn}) proposed in \citet{mammen2022backfitting}; hereafter, denoted as SB. The SB statistic in (\ref{eqn:sn}) is approximated using Riemann sum as 
\begin{align*}
  S_n &= \sum_{i=1}^n \widehat{m}^2_2(X_{(i)2}) \widehat{f}_2(X_{(i)2}) (X_{(i)2}-X_{(i-1)2}), ~ \text{  with  }  ~ X_{(0)2}:=X_{(1)2},
\end{align*}
where $X_{(i)2}$ is the $i$th order statistic of $X_2$.
An \texttt{R} \citep{team2013r} package \emph{wsbackfit} \citep{wsbackfit} is used to estimate the  additive component $\widehat{m}_2$ and kernel density estimate  $\widehat{f}_2$  which uses the bandwidth considered for $\widehat{m}_2(\cdot)$. 

 We compute the optimal bandwidths using the following cross-validation procedure which is defined similar to \citet{nielsen2005smooth}.
 \begin{enumerate}
  \item Fit the additive model for initial values of bandwidths. 
  \item For any direction (covariate), consider the corresponding partial residual as a response variable and use an Akaike Information Criterion based smoothing parameter selection method \citep{hurvich1998smoothing} to determine the optimal bandwidth in univariate local linear regression.
  \item  Refit the model with the updated bandwidths and proceed to the step 2 choosing a different direction.  
  \item Obtain, the optimal bandwidths at the convergence of the above procedure. 
 \end{enumerate}

   To demonstrate Wilks phenomenon for  GLR, LF, and F tests, we choose three levels of bandwidths $h_1= h_{1,\text{opt}}/3, h_{1, \text{opt}}, 1.5 h_{1,\text{opt}}$ and $h_2={h_{2,\text{opt}}}$, $h_3={h_{3,\text{opt}}}$ and $h_4={h_{4,\text{opt}}}$. Similarly,  we consider three levels of $m_1(X_1)$ to show that the proposed tests do not depend on the nuisance function $m_1(X_1)$:
\begin{align*}
   m_{1,\beta} (X_1) &= \left[ 1 + \beta \sqrt{\text{var}(0.5-X_1^2+3X_1^3)} \right](0.5-X_1^2+3X_1^3),
\end{align*}
where $\beta= -1.5, 0, 1.5$. For LF statistic, we consider the following class of LINEX functions \citep{hong2013loss}:
\begin{align}
  d(z) &= \frac{t}{s^2} \left[ \exp(sz)-(1+sz) \right],
\label{eqn:linex-def}
\end{align}
 where $d(z)$ is an asymmetric loss function for each pair of parameters $(s,t)$. The magnitude of $s$, which is a shape parameter, controls the degree of asymmetry. The parameter $t$ is a scale factor.

We draw 1000 samples of 100 observations from (\ref{eq:sim-model}) and for each sample, we compute the scaled GLR, LF, and F test statistics. The distributions of scaled GLR, LF, and F test statistics among 1000 simulations are obtained via a kernel estimate using the rule of thumb bandwidth $h= 1.06sn^{-.2}$, where $s$ is the standard deviation of the test statistics. Figure \ref{fig:sim1-1a} shows the estimated densities for the scaled  GLR and LF, and F test statistics. Plots in the top row show that the null distributions of scaled GLR and LF statistics follow a  chi-squared distribution over a wide range of bandwidth values for $h_1$. It is interesting to note that $r_k \approx s_k$, $\mu_n \approx v_n$ and $M=1/2$. Due to the extra scaling constant $M=1/2$, the distribution of $s_kM^{-1}q_n$ seems like a scaled version of the distribution of $r_k\lambda_n$. Both F statistics, $F_{\lambda}$ and $F_q$, are computed using the $\bm{C}$, $\bm{D}$ and $\bm{E}$ matrices defined in (\ref{eqn:cmat},\ref{eqn:dmat},\ref{eqn:emat}); the results also illustrate that they  provide a good approximation.  Similarly, plots from the bottom row demonstrate the Wilks phenomenon for scaled GLR, LF, and F test statistics, as their null distributions are nearly the same for three different choices of the nuisance functions for $m_1(\cdot)$. For LF test, we consider the LINEX loss function (\ref{eqn:linex-def}) with the choice $s=0$ and $t=1$. 

\begin{figure}[t]
  \centering
  \makebox{\includegraphics[scale=0.4]{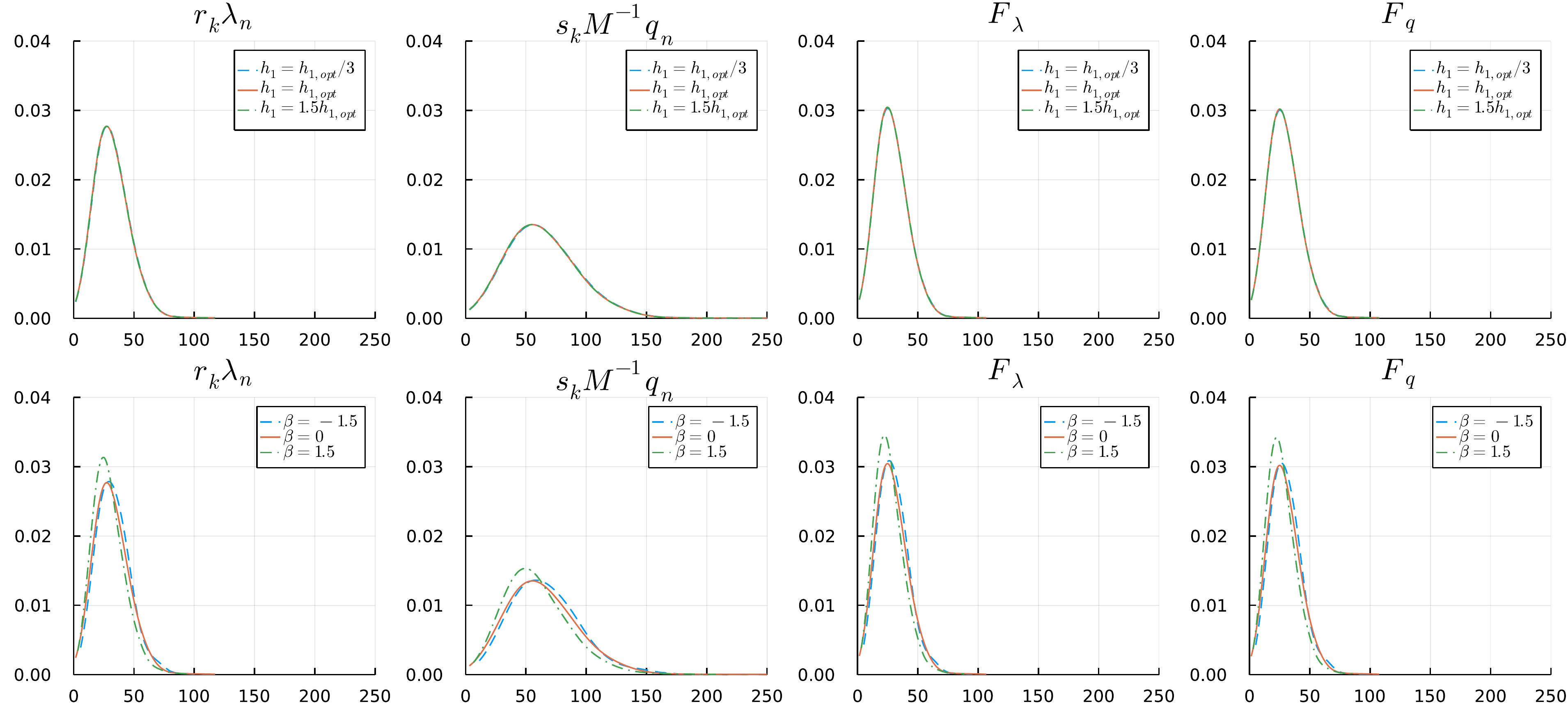}}
   \caption{Estimated densities for the scaled GLR, LF,  and F test statistics among 1000 simulations. (Top row) With fixed $(h_2,h_3,h_4)$ $=\{h_{2,\text{opt}},h_{3,\text{opt}},h_{4,\text{opt}}\}$, but different bandwidths for $h_1$ (- - $h_1=h_{1,\text{opt}}/3$; ---$ h_1=h_{1,\text{opt}}$; $-\cdot- h_1=1.5h_{1,\text{opt}}$). (Bottom row) With different nuisance functions and optimal bandwidths $h_j=h_{j,\text{opt}}$, $j=1,2,3,4$, (- - $\beta=-1.5$; ---$\beta=0$; $- \cdot- \beta=1.5$). The LF test considers the class of LINEX functions (\ref{eqn:linex-def})with $s=0$ and $t=1$. }
   \label{fig:sim1-1a}
\end{figure}

For power comparison among GLR, LF, F, GLR(FJ), and SB tests, we evaluate the power for a sequence of alternative models indexed by $\theta$,
\begin{align}
  H_{\theta}: m_{2,\theta} = \theta \sin(\pi X_2), \qquad 0 \le \theta \le 1,
  \label{eqn:htheta}
\end{align}
where $E(\sin(\pi X_2)|X_1,X_3,X_4)=0$, $\theta=0$ gives the null model and $\theta>0$ makes the alternative reasonably far away from the null model. For each given value of $\theta$, we consider $2000$ Monte Carlo replicates for calculation of the critical values via conditional bootstrap method which is described in Section S3 of Supplementary Material.
The rejection percentage values are computed based on $500$ simulations. When $\theta=0$, the alternative is identical to the null hypothesis and the power is approximately equal to the significance level $\alpha= 0.05$ or $0.01$.
Furthermore, to illustrate the influence of different error distributions on the power of GLR, LF, and F tests, we consider model (\ref{eq:sim-model}) with different error distributions of $\epsilon$, namely, $\mathcal{N}(0,1)$, $t(5)$, $\chi^2(5)$ and $\chi^2(10)$. The distributions of test statistics among 1000 simulations are provided in Figure \ref{fig:sim1-1b} in Supplementary Material. The estimated densities are approximately similar across different error distributions.   

The power of GLR, LF, F, GLR(FJ), and SB tests for the alternative model sequence in (\ref{eqn:htheta}) at the significance level $\alpha=0.05$ is provided in Figure \ref{fig:sim-five-bseven}  for  $n=100$ and $n=400$.  Figure \ref{fig:sim-five-bseven} illustrates that both GLRT and LFT differentiate the null and alternative hypotheses with high power while not being sensitive to error distributions. When $\theta=0$, the alternative is identical to the null and hence, the power should be approximately equal to $\alpha$ ($0.05$); this is evident from the results. This gives an indication that Monte Carlo approach yields the correct estimator of the null distribution. Based on Theorem \ref{thm:opt-test}, we consider the bandwidths that are optimal for testing $h_j=S_{X_j}n^{-2/17}$, $j=1,2,3,4$. Here $S_{X_j}$ is the standard deviation of $X_j$. One important observation is that the results for the statistic $F_{\lambda}$ exhibit some variation. However, in Figure \ref{fig:sim-five-bopt} where the optimal bandwidths for model estimation (cross-validation) are considered, we observe that $F_{\lambda}$ performs very similar to other statistics. We note that the optimal bandwidths are larger than $S_{X_j}n^{-2/17}$; it seems $F_{\lambda}$ is not stable for smaller bandwidths due to approximation. Overall, Figures \ref{fig:sim-five-bseven} and \ref{fig:sim-five-bopt} illustrate that the proposed methods in the study work well with the finite samples and comparable to other existing methods in the literature.


\begin{figure}
  \centering
  \includegraphics[scale=0.35]{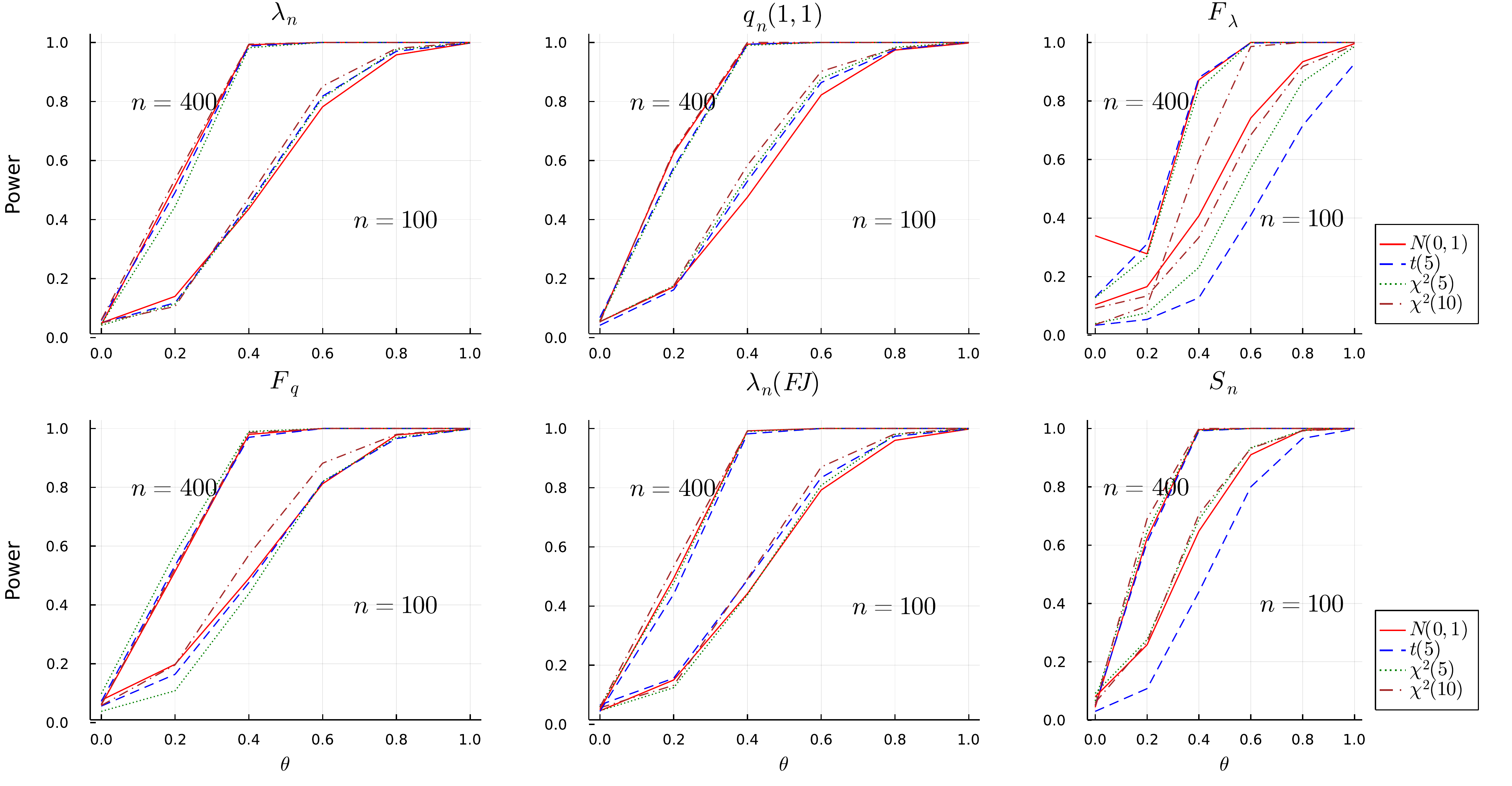}
  \caption{Power of the tests under alternative model sequence (\ref{eqn:htheta}) using optimal bandwidths for testing, $S_X n^{-2/17}$, at 5\% level of significance. Only the LF test with LINEX loss function (\ref{eqn:linex-def}) for $s=1,t=1$ is reported. The power values are similar for other choices of $s$ and $t$. }
  \label{fig:sim-five-bseven}
\end{figure}
We also provide the power comparison of the above methods at 1\% level of significance. The results are available in Figures \ref{fig:sim-one-bseven} and \ref{fig:sim-one-bopt} in Supplementary Material. The findings remain similar. 

\begin{figure}
  \centering
  \includegraphics[scale=0.35]{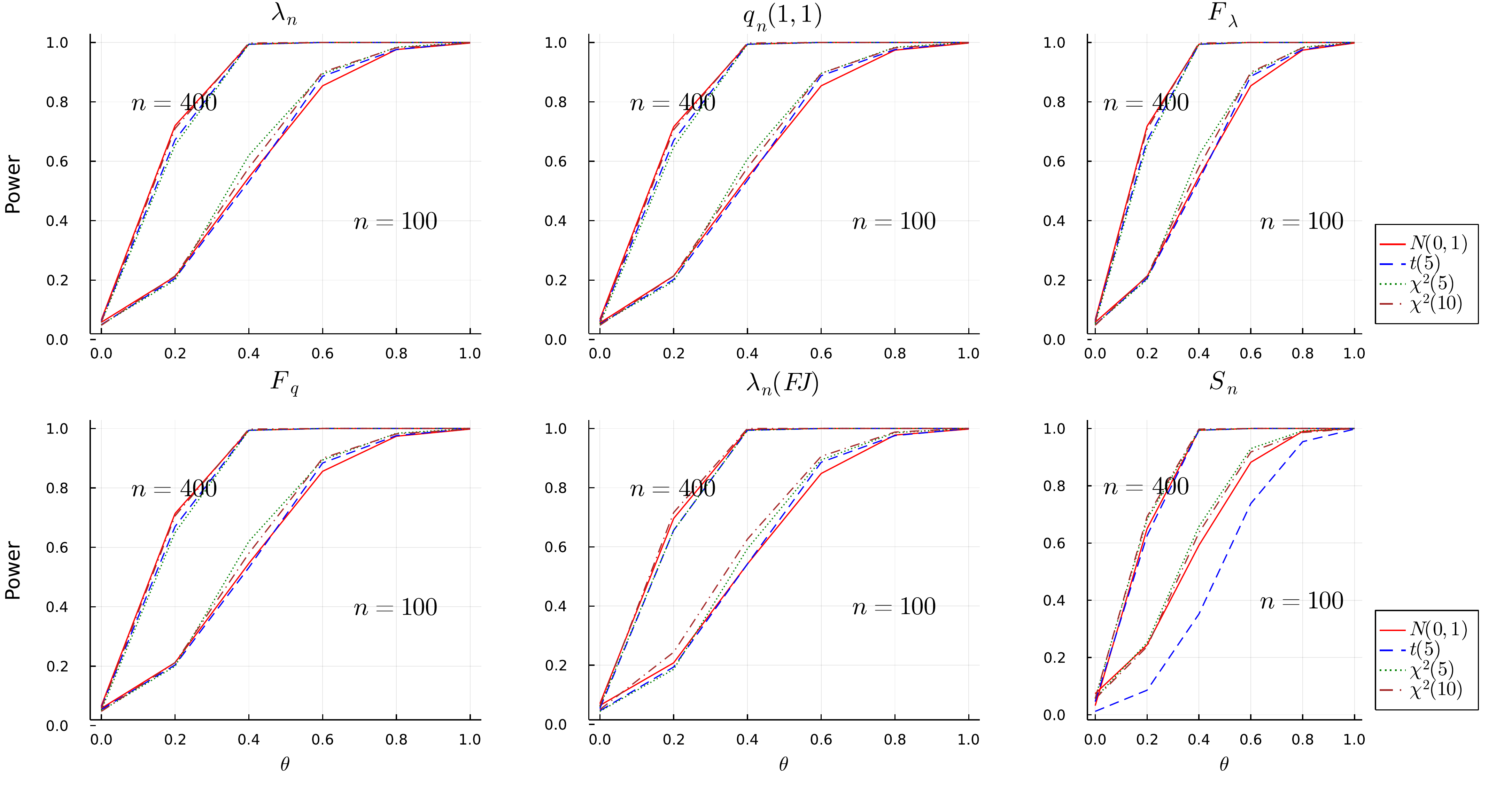}
  \caption{Power of the tests under alternative model sequence (\ref{eqn:htheta}) using optimal bandwidths for estimation (cross-validation) at 5\% level of significance. Only the LF test with LINEX loss function (\ref{eqn:linex-def}) for $s=1,t=1$ is reported. The power values are similar for other choices of $s$ and $t$. }
  \label{fig:sim-five-bopt}
\end{figure}

\subsection{Boston Housing Data Analysis}
To demonstrate the usefulness of the proposed GLR and LF tests, we consider the Boston housing data.  This data include the information collected by the U.S Census Service regarding housing in the area of Boston Mass, and originally published in \citet{harrison1978hedonic}. It contain the median values of 506 homes along with 13 sociodemographic and related variables. This data has been previously used in the literature to benchmark different algorithms and to illustrate different methodologies. For example, please see \citet{belsley2005regression}, \citet{breiman1985estimating}, \citet{opsomer1998fully} and \citet{fan2005nonparametric}. For the sake of easy comparison,  we consider the same dependent and independent variables used in \citet{fan2005nonparametric}.

\begin{itemize}
  \item \emph{MV}, median value of owner-occupied homes in \$1,000's
  \item \emph{RM}, average number of rooms per dwelling
  \item \emph{TAX}, full-value property tax rate $(\$/\$10,000)$
  \item \emph{PTRATIO}, pupil/teacher ratio by town school district
  \item \emph{LSTAT}, proportion of population that is of ``lower status" $(\%)$.
\end{itemize}

\citet{opsomer1998fully} and \citet{fan2005nonparametric} analyze this data by considering the following four-dimensional additive model,
\begin{align}
  E[MV \mid X_1, X_2, X_3, X_4] &= m_0 + m_1(X_1) +m_2(X_2) +m_3(X_3) + m_4(X_4),
  \label{eqn:bd-alt}
\end{align}
where $X_1=RM$, $X_2=\log(TAX)$, $X_3=PTRATIO$, and $X_4=\log(LSTAT)$. We use simplified smooth backfitting algorithm in Section \ref{subsubsec:simple-sb} with local linear smoothing to estimate model (\ref{eqn:bd-alt}) after six outliers $(\widehat{\epsilon}_i < -11 \text{ or } \widehat{\epsilon}_i > 12)$ are removed.  To alleviate the effect of sample size on $p$-value, we take a random sample $n=200$ observations for hypothesis testing. The optimal bandwidths  are selected using the cross-validation procedure described in Section \ref{sec:simul} which uses the AIC to find optimal bandwidth in each direction. 
For comparison, we also fit model (\ref{eqn:bd-alt}) using classical backfitting in \citet{fan2005nonparametric}, smooth backfitting \citep{mammen2022backfitting,wsbackfit} with \emph{wsbackfit} package in \texttt{R}, and penalized splines approach with \emph{mgcv} package in \texttt{R} \citep{wood2015package,team2013r}.

Figure \ref{fig:bos-pr-500} shows the estimated additive functions along with the partial residuals. The simplified smooth backfitting algorithm estimates the additive functions $\widehat{m}_j$ as a sum of two functions i.e. $\widehat{m}_j^*$ and $\widehat{g}_j$, where $\widehat{m}_j^*$ is the purely nonparametric part and  $\widehat{g}_j$ is the parametric part corresponding to the eigenvectors of eigenvalue 1 of the smoother $\bm{H}_{1,j}^*$. For comparison,  Figure \ref{fig:bos-pr-200_cbmg} includes the fits from gam() function in \emph{mgcv} package in \texttt{R},  from sback() function in \emph{wsbackfit} package in \texttt{R},  and from the classical backfitting of \citet{fan2005nonparametric}. Figure \ref{fig:bos-pr-200_cbmg} shows that the fits from the these methods are very similar. From both Figures \ref{fig:bos-pr-500} and \ref{fig:bos-pr-200_cbmg} we find that the  additive components for all the variables except $RM$ exhibit the following parametric forms:
\begin{align}
  m_i(X_i) &= a_i + b_i X_i \qquad \text{     for } i=2,3,4.
  \label{eqn:bos-sempar}
\end{align}
 This confirms with the observations of \citet{opsomer1998fully} and \citet{fan2005nonparametric}.

We use the proposed GLR and LF statistics to test whether the semiparametric null model (\ref{eqn:bos-sempar}) holds against the additive alternative model (\ref{eqn:bd-alt}). For the LF test statistic, we consider the family of  LINEX loss functions (\ref{eqn:linex-def}) with parameters $(s=\{0,0.2,0.5,1\}, t=1)$. For comparison, we include the results for the  GLR test  in \citet{fan2005nonparametric}, which we refer as GLR(FJ). Further, we also include the results from the backfit test (SB) defined in \citet{mammen2022backfitting}. For convenience, the test statistic for SB method is computed as
\begin{align*}
 S_n &=    \sum_{j=2}^4 \sum_{i=1}^n \{ \widehat{m}_j(X_{(i)j})-\widehat{g}_j(X_{(i)j})\}^2 \widehat{f}_j(X_{(i)j}) (X_{(i)j}-X_{(i-1)j}), ~ \text{  with  }  ~ X_{(0)j}:=X_{(1)j},
\end{align*}
where $X_{(i)j}$ is the $i$th order statistic of $X_j$, $\widehat{g}_j(\cdot)$ is the corresponding parametric part. The null distributions of the test statistics $\lambda_n$,  $q_n$, $F_{\lambda}$, $F_{q}$, $\lambda_n(FJ)$, and $S_n$ are necessary to compute their $p$-values. Therefore, we use the conditional bootstrap method described in Supplementary Material to obtain the null distributions of the test statistics.  The optimal bandwidths  $\bm{h}_{\text{opt}}=(0.40,0.20,0.59,0.39)^T$ are computed using the procedure described in Section \ref{sec:simul}. 
\begin{figure}[h]
  \centering
  \makebox{\includegraphics[scale=0.3]{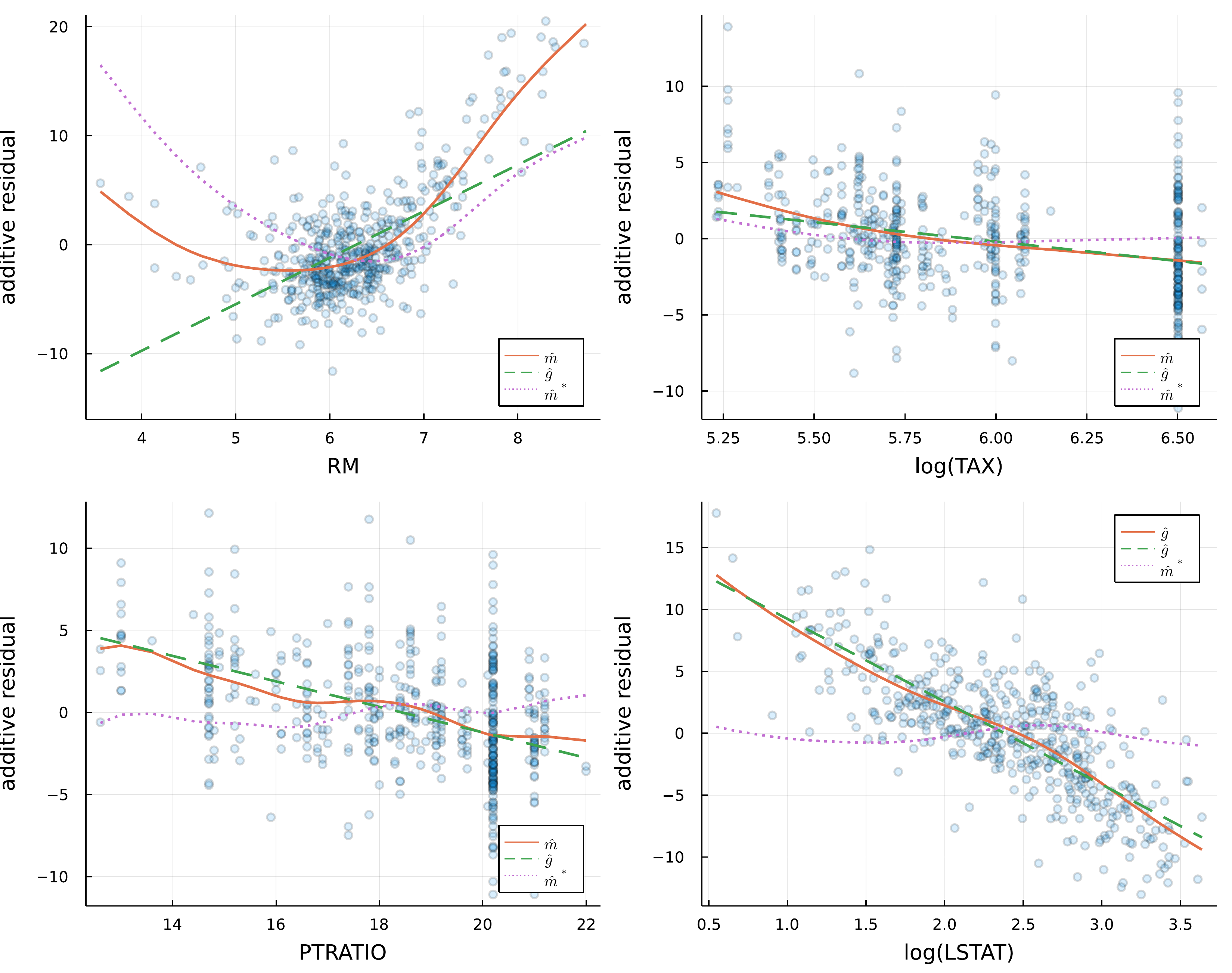}}
  \caption{Partial residual plots along with fitted regression curves for the Boston housing dataset.  The solid lines represent the estimated additive functions $\widehat{m}_j=\widehat{m}_j^*+\widehat{g}_j$, $j=1,2,3,4$; dotted lines indicate the purely nonparametric functions $\widehat{m}_j^*$, dashed lines represent the parametric part $\widehat{g}_j$, for $j=1,2,3,4$.}
  \label{fig:bos-pr-500}
\end{figure}
%
\begin{figure}[h]
  \centering
  \makebox{\includegraphics[scale=0.3]{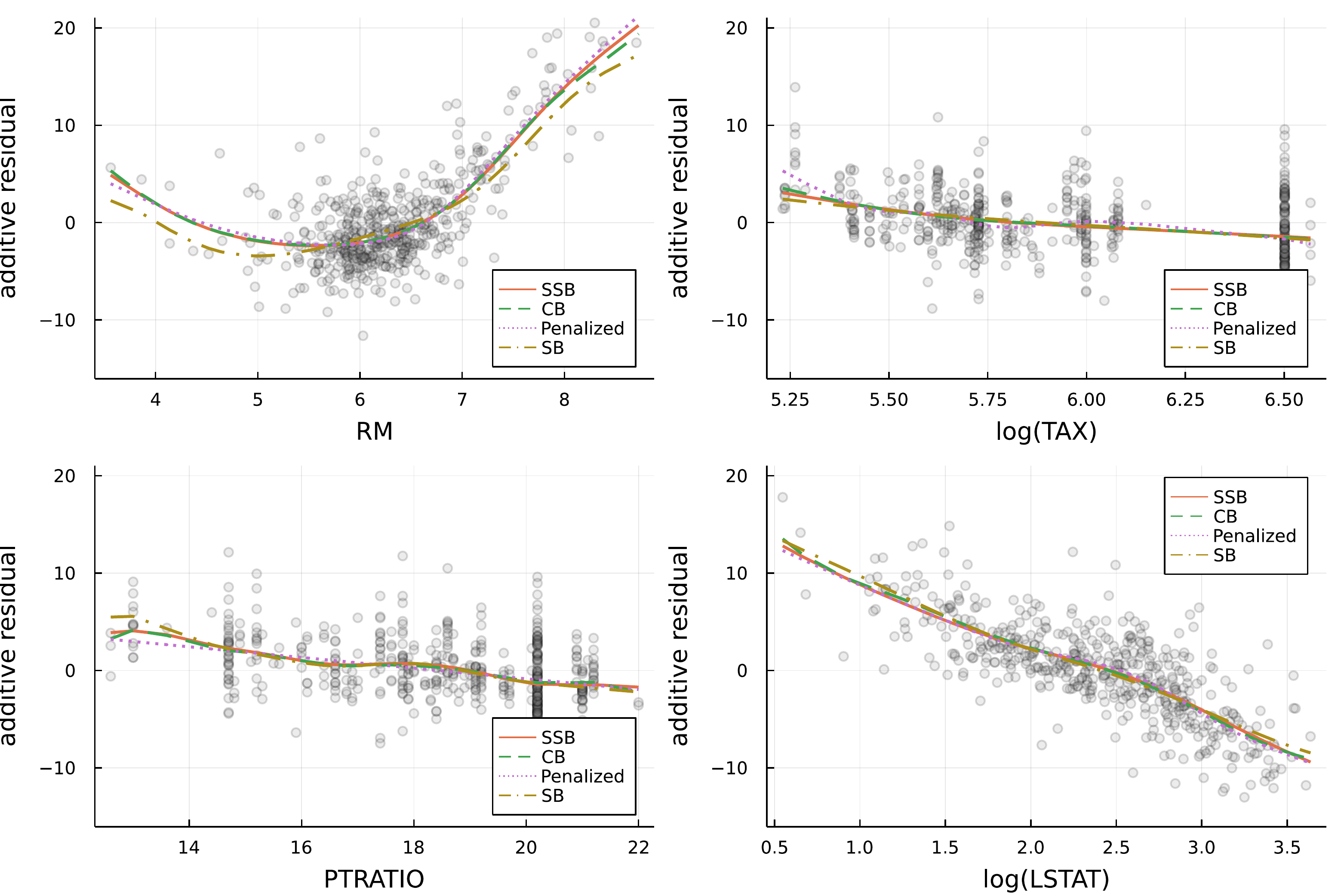}}
  \caption{Comparison of fits for the Boston housing dataset.  SSB: (solid)   $\widehat{m}_j$, $j=1,2,3,4$; Penalized: (dotted) fits from gam() function in \emph{mgcv} package in \texttt{R}; CB: (dashed) fits from the classical backfitting of \citet{fan2005nonparametric}; SB: (dash dot)  fits from  sback() function in \emph{wsbackfit} package in \texttt{R}.}
  \label{fig:bos-pr-200_cbmg}
\end{figure}

Table \ref{tab:bos-200} provides 
the $p$-values for statistics $\lambda_n$,  $q_n$, $F_{\lambda}$, $F_{q}$, $\lambda_n(FJ)$, and $S_n$ with the following five different bandwidths $(\frac{1}{2}\bm{h}_{\text{opt}}, \frac{2}{3}\bm{h}_{\text{opt}}, \bm{h}_{\text{opt}}, \frac{3}{2}\bm{h}_{\text{opt}}, 2\bm{h}_{\text{opt}})^T$ and using $1000$ bootstrap replications to compute null distributions. These results indicate that  the semiparametric model (\ref{eqn:bos-sempar}) is appropriate for this dataset within the additive models. For smaller bandwidths (undersmoothing) there is some evidence to reject the null hypothesis which is not surprising. For larger bandwidths, the estimated additive functions look more like parametric models and therefore the evidence is in favor of the null hypothesis. For the optimal bandwidths considered for estimation, the proposed GLR  and LF tests conclude that semiparametric additive model is appropriate at $0.01$ and $0.1$ significance levels, respectively. This result also validates our finding that the LF test is asymptotically more powerful than the GLR test. The $p$-values of the statistic $\lambda_n(FJ)$ are the smallest among all. We note  that the optimal bandwidths are computed using simplified smooth backfitting and the same are used for GLR(FJ) as well.

\begin{table}
  \caption{\label{tab:bos-200} P-values of statistics $\lambda_n$,  $q_n$, $F_{\lambda}$, $F_{q}$, $\lambda_n(FJ)$, and $S_n$ test statistics for a random sample of 200 observations from the Boston housing data. The LF test statistic uses the family of LINEX loss functions (\ref{eqn:linex-def}). We consider 1000 bootstrap replications to compute the null distributions of respective statistics.}\par
  \centering

  \begin{tabular}{|lccccccccc|}
      \hline
   Bandwidth &  $\lambda_n$ & $q_n(0,1)$ & $q_n(0.2,1)$ & $q_n(0.5,1)$ & $q_n(1,1)$ & $F_{\lambda}$ & $F_q$ & $\lambda_n(FJ)$ & $S_n$ \\
   \hline
   $\frac{1}{2}\bm{h}_{\text{opt}}$ & $0.001$ & $0.007$ & $0.01$ & $0.022$ & $0.024$ & $0.101$ & $0.125$ & $0.0$ & $0.044$ \\
   \hline
   $\frac{2}{3}\bm{h}_{\text{opt}}$ &  $0.005$ & $0.02$ & $0.032$ & $0.05$ & $0.047$ & $0.183$ & $0.232$ & $0.0$ & $0.237$ \\
   \hline
    $\bm{h}_{\text{opt}}$ &  $0.023$ & $0.09$ & $0.092$ & $0.104$ & $0.111$ & $0.218$ & $0.228$ &$0.01$ & $0.574$ \\
   \hline
    $\frac{3}{2}\bm{h}_{\text{opt}}$ & $0.08$ & $0.134$ & $0.15$ & $0.171$ & $0.205$ & $0.194$ & $0.155$ & $0.055$ & $0.418$ \\
   \hline
    $2\bm{h}_{\text{opt}}$ & $0.124$ & $0.081$ & $0.086$ & $0.102$ & $0.129$ & $0.15$ & $0.091$ &$0.079$ & $0.191$ \\                                       
      \hline
    \end{tabular}
  \end{table}

To sum up, the results in this section indicate that both GLR and LF tests are very useful in practical applications. While their performances  are sensitive to the choice of bandwidth, it is not straightforward to find optimal bandwidths for these statistics. Additionally, the finite sample performance of LF test is mildly sensitive to the choice of the loss function. Therefore, in practice, it is advisable to use both frameworks for a given hypothesis testing problem. This helps minimizing errors associated with hypothesis testing.    

\section{ Summary and Conclusions}

In this study, we develop a hypothesis testing framework for additive models using GLR and LF tests where simplified smooth backfitting is used for model estimation.  While the properties of GLR test are available in the literature for additive models estimated via classical backfitting \citep{opsomer2000asymptotic}, it is not the case for additive models estimated with simplified smooth backfitting \citep{huang2018}. Similarly, the results for the LF test are not available for additive models. We fill this void by proposing inference methods using GLR and LF tests when a model uses simplified smooth backfitting for estimation.  Under some regularity conditions, we show both the test statistics achieve Wilks phenomenon and have optimal power properties.  Furthermore, LF test is asymptotically more powerful than GLR test. This result is a new addition to the existing literature. The numerical performance of test statistics is also very similar across different bandwidths, and robust to different error distributions to some extent.

One possible direction for future research is to propose similar testing frameworks for generalized additive models. The LF test is asymptotically more powerful than the GLR test in linear additive models. It will be interesting to see whether the same result holds in generalized additive models.


\par
\begin{acknowledgements}
The author would like to thank the associate editor and two anonymous referees for their constructive feedback, which resulted in major changes to the paper's presentation. The  author would  also like to  thank Prof. Li-Shan Huang for offering a postdoctoral position and for sharing her research on simplified smooth backfitting, which provided the required framework for this article. 
The author gratefully acknowledges the support from grants 107-2811-M-007-014, 105-2118-M-007-006-MY2, and  107-2811-M-007-1047 by the Ministry Of Science and Technology (MOST) in Taiwan (R.O.C).
\end{acknowledgements}

\bibliographystyle{agsm}
\bibliography{SBL}

@article{tjostheim1994nonparametric,
  title={Nonparametric identification of nonlinear time series: projections},
  author={Tj{\o}stheim, Dag and Auestad, Bj{\o}rn H},
  journal={Journal of the American Statistical Association},
  volume={89},
  number={428},
  pages={1398--1409},
  year={1994},
  publisher={Taylor \& Francis}
}

@Manual{wsbackfit,
    title = {wsbackfit: Weighted Smooth Backfitting for Structured Models},
    author = {Javier Roca-Pardinas and Maria Xose Rodriguez-Alvarez and Stefan Sperlich},
    year = {2021},
    note = {R package version 1.0-5},
    url = {https://CRAN.R-project.org/package=wsbackfit},
  }

@article{mammen2022backfitting,
  title={Backfitting tests in generalized structured models},
  author={Mammen, Enno and Sperlich, S},
  journal={Biometrika},
  volume={109},
  number={1},
  pages={137--152},
  year={2022},
  publisher={Oxford University Press}
}

@article{wood2015package,
  title={Package ‘mgcv’},
  author={Wood, Simon and Wood, Maintainer Simon},
  journal={R package version},
  volume={1},
  pages={29},
  year={2015}
}

@Manual{team2013r,
    title = {R: A Language and Environment for Statistical Computing},
    author = {{R Core Team}},
    organization = {R Foundation for Statistical Computing},
    address = {Vienna, Austria},
    year = {2021},
    url = {https://www.R-project.org/},
  }

@article{bezanson2017julia,
  title={Julia: A fresh approach to numerical computing},
  author={Bezanson, Jeff and Edelman, Alan and Karpinski, Stefan and Shah, Viral B},
  journal={SIAM review},
  volume={59},
  number={1},
  pages={65--98},
  year={2017},
  publisher={SIAM},
  url={https://doi.org/10.1137/141000671}
}

@article{lian2015separation,
  title={Separation of covariates into nonparametric and parametric parts in high-dimensional partially linear additive models},
  author={Lian, Heng and Liang, Hua and Ruppert, David},
  journal={Statistica Sinica},
  pages={591--607},
  volume          = {25},
  number          = {},
  year={2015},
  publisher={JSTOR}
}

@article{lian2012identification,
  title={Identification of partially linear structure in additive models with an application to gene expression prediction from sequences},
  author={Lian, Heng and Chen, Xin and Yang, Jian-Yi},
  journal={Biometrics},
  volume={68},
  number={2},
  pages={437--445},
  year={2012},
  publisher={Wiley Online Library}
}

@article{horowitz2013penalized,
  title={Penalized estimation of high-dimensional models under a generalized sparsity condition},
  author={Horowitz, Joel L and Huang, Jian},
  journal={Statistica Sinica},
  pages={725--748},
  volume={23},
  number={},
  year={2013},
  publisher={JSTOR}
}

@article{meier2009high,
  title={High-dimensional additive modeling},
  author={Meier, Lukas and Van de Geer, Sara and B{\"u}hlmann, Peter},
  journal={The Annals of Statistics},
  volume={37},
  number={6B},
  pages={3779--3821},
  year={2009},
  publisher={Institute of Mathematical Statistics}
}

@article{friedman1981projection,
  title={Projection pursuit regression},
  author={Friedman, Jerome H and Stuetzle, Werner},
  journal={Journal of the American statistical Association},
  volume={76},
  number={376},
  pages={817--823},
  year={1981},
  publisher={Taylor \& Francis}
}

@book{belsley2005regression,
  title={Regression diagnostics: Identifying influential data and sources of collinearity},
  author={Belsley, David A and Kuh, Edwin and Welsch, Roy E},
  volume={571},
  year={2005},
  publisher={John Wiley \& Sons}
}

@article{opsomer1998fully,
  title={A fully automated bandwidth selection method for fitting additive models},
  author={Opsomer, Jean D and Ruppert, David},
  journal={Journal of the American Statistical Association},
  volume={93},
  number={442},
  pages={605--619},
  year={1998},
  publisher={Taylor \& Francis Group}
}

@article{harrison1978hedonic,
  title={Hedonic housing prices and the demand for clean air},
  author={Harrison Jr, David and Rubinfeld, Daniel L},
  journal={Journal of environmental economics and management},
  volume={5},
  number={1},
  pages={81--102},
  year={1978},
  publisher={Elsevier}
}

@book{pitman2018some,
  title={Some basic theory for statistical inference: Monographs on applied probability and statistics},
  author={Pitman, Edwin JG},
  year={2018},
  publisher={CRC Press}
}

@article{de1987central,
  title={A central limit theorem for generalized quadratic forms},
  author={de Jong, Peter},
  journal={Probability Theory and Related Fields},
  volume={75},
  number={2},
  pages={261--277},
  year={1987},
  publisher={Springer}
}

@article{huang2010analysis,
  title={Analysis of variance and F-tests for partial linear models with applications to environmental health data},
  author={Huang, Li-Shan and Davidson, Philip W},
  journal={Journal of the American Statistical Association},
  volume={105},
  number={491},
  pages={991--1004},
  year={2010},
  publisher={Taylor \& Francis}
}

@book{hart2013nonparametric,
  title={Nonparametric smoothing and lack-of-fit tests},
  author={Hart, Jeffrey},
  year={2013},
  publisher={Springer Science \& Business Media}
}

@article{ingster1993asymptotically,
  title={Asymptotically minimax hypothesis testing for nonparametric alternatives. I, II, III},
  author={Ingster, Yuri I},
  journal={Math. Methods Statist},
  volume={2},
  number={2},
  pages={85--114},
  year={1993}
}

@article{fan2005nonparametric,
	title={Nonparametric inferences for additive models},
	author={Fan, Jianqing and Jiang, Jiancheng},
	journal={Journal of the American Statistical Association},
	volume={100},
	number={471},
	pages={890--907},
	year={2005},
	publisher={Taylor \& Francis}
}

@article{roca2005testing,
  title={Testing for interactions in generalized additive models: application to SO 2 pollution data},
  author={Roca-Pardi{\~n}as, Javier and Cadarso-Su{\'a}rez, Carmen and Gonz{\'a}lez-Manteiga, Wenceslao},
  journal={Statistics and Computing},
  volume={15},
  number={4},
  pages={289--299},
  year={2005},
  publisher={Springer}
}

@book{hastie1990generalized,
  title={Generalized Additive Models},
  author={Hastie, T.J. and Tibshirani, R.J.},
  isbn={9780412343902},
  lccn={99015185},
  series={Chapman \& Hall/CRC Monographs on Statistics \& Applied Probability},
  year={1990},
  publisher={Taylor \& Francis}
}

@article{hardle2004bootstrap,
  title={Bootstrap inference in semiparametric generalized additive models},
  author={H{\"a}rdle, Wolfgang and Huet, Sylvie and Mammen, Enno and Sperlich, Stefan},
  journal={Econometric Theory},
  volume={20},
  number={2},
  pages={265--300},
  year={2004},
  publisher={Cambridge University Press}
}

@article{hong2013loss,
  title={A loss function approach to model specification testing and its relative efficiency},
  author={Hong, Yongmiao and Lee, Yoon-Jin},
  journal={Annals of Statistics},
  volume={41},
  number={3},
  pages={1166--1203},
  year={2013},
  publisher={Institute of Mathematical Statistics}
}

@article{spokoiny1996adaptive,
  title={Adaptive hypothesis testing using wavelets},
  author={Spokoiny, Vladimir G and others},
  journal={Annals of Statistics},
  volume={24},
  number={6},
  pages={2477--2498},
  year={1996},
  publisher={Institute of Mathematical Statistics}
}

@article{fan2001generalized,
	title={Generalized likelihood ratio statistics and Wilks phenomenon},
	author={Fan, Jianqing and Zhang, Chunming and Zhang, Jian},
	journal={Annals of statistics},
  volume={29},
  number={1},
	pages={153--193},
	year={2001},
	publisher={JSTOR}
}

@article{breiman1985estimating,
	title={Estimating optimal transformations for multiple regression and correlation},
	author={Breiman, Leo and Friedman, Jerome H},
	journal={Journal of the American statistical Association},
	volume={80},
	number={391},
	pages={580--598},
	year={1985},
	publisher={Taylor \& Francis Group}
}

@article{nielsen2005smooth,
  title={Smooth backfitting in practice},
  author={Nielsen, Jens Perch and Sperlich, Stefan},
  journal={Journal of the Royal Statistical Society: Series B (Statistical Methodology)},
  volume={67},
  number={1},
  pages={43--61},
  year={2005},
  publisher={Wiley Online Library}
}

@article{hurvich1998smoothing,
  title={Smoothing parameter selection in nonparametric regression using an improved Akaike information criterion},
  author={Hurvich, Clifford M and Simonoff, Jeffrey S and Tsai, Chih-Ling},
  journal={Journal of the Royal Statistical Society: Series B (Statistical Methodology)},
  volume={60},
  number={2},
  pages={271--293},
  year={1998},
  publisher={Wiley Online Library}
}

@article{huang2018,
	title={Classical Backfitting for Smooth-backfitting Additive Models},
	author={Huang, Li-Shan and Yu, Chung-Hsin},
	journal={Journal of Computational and Graphical Statistics},
	pages={386--400},
  volume={28},
  number={2},
	year={2019},
	publisher={Taylor \& Francis}
	
}

@incollection{lin2010probability,
	title={Probability Inequalities of Random Variables},
	author={Lin, Zhengyan and Bai, Zhidong},
	booktitle={Probability Inequalities},
	pages={37--50},
	year={2010},
	publisher={Springer}
}

@article{opsomer2000asymptotic,
	title={Asymptotic properties of backfitting estimators},
	author={Opsomer, Jean D},
	journal={Journal of Multivariate Analysis},
	volume={73},
	number={2},
	pages={166--179},
	year={2000},
	publisher={Elsevier}
}

@book{fan1996local,
  title={Local polynomial modelling and its applications: monographs on statistics and applied probability 66},
  author={Fan, Jianqing and Gijbels, Irene},
  volume={66},
  year={1996},
  publisher={CRC Press}
}

@book{wood2017generalized,
  title={Generalized additive models: an introduction with R},
  author={Wood, Simon N},
  year={2017},
  publisher={CRC press}
}

@article{linton1995kernel,
  title={A kernel method of estimating structured nonparametric regression based on marginal integration},
  author={Linton, Oliver and Nielsen, Jens Perch},
  journal={Biometrika},
  volume={82},
  number={1},
  pages={93--100},
  year={1995},
  publisher={JSTOR}
}

@article{buja1989linear,
  title={Linear smoothers and additive models},
  author={Buja, Andreas and Hastie, Trevor and Tibshirani, Robert},
  journal={The Annals of Statistics},
  volume={17},
  number={2},
  pages={453--510},
  year={1989},
  publisher={JSTOR}
}

@article{hua14local,
  title={Local polynomial and penalized trigonometric series regression},
  author={Huang, Li-Shan and Chan, Kung-Sik},
  journal={Statistica Sinica},
  volume={24},
  pages={1215--1238},
  year={2014},
  publisher={JSTOR}
}

@article{hua08analysis,
  title={Analysis of variance, coefficient of determination and F $-$ test for local polynomial regression},
  author={Huang, Li-Shan and Chen, Jianwei},
  journal={The Annals of Statistics},
  volume={36},
  number={5},
  pages={2085--2109},
  year={2008},
  publisher={JSTOR}
}

@article{mam99existence,
  title={The existence and asymptotic properties of a backfitting projection algorithm under weak conditions},
  author={Mammen, Enno and Linton, Oliver and Nielsen, J},
  journal={The Annals of Statistics},
  volume={27},
  number={5},
  pages={1443--1490},
  year={1999},
  publisher={Institute of Mathematical Statistics}
}

\newpage 

\begin{center}
\Huge{Supplementary Material}
\end{center}

\section*{Introduction}
This article develops a hypothesis testing framework for additive models. For a random sample $
\{Y_i,X_{i1},\ldots,X_{id} \}_{i=1}^n$, we consider
\begin{align}
  Y_i &= \alpha_0+ \sum_{j=1}^d m_j(X_{ij})+ \epsilon_i, \quad  i=1,\ldots,n,
  \label{eqn:int-model-suppl}
  \end{align}	
  where $\{\epsilon_i, i=1, \dots, n\}$ is a sequence of i.i.d. random variables with mean zero and
  finite variance $\sigma^2$. Each additive component function $m_j(\cdot)$, $j=1,\ldots,d$, is assumed to be an unknown smooth function and identifiable subject to the constraint, $E[m_j(\cdot)]=0$.  For simplicity in presentation,  the following hypothesis testing problem is considered 
  \begin{align}
    H_0: m_d(x_d)=0 \quad \text{vs.} \qquad   H_1: m_d(x_d) \ne 0, 
    \label{eqn:h1-suppl}
  \end{align}
  which tests whether the $d$th covariate is significant or not. 
  
  For readability, we repeat some notations and definitions that are provided in the main document. Let $\bm{m}_j=(m_j(X_{1j}),\ldots, m_j(X_{nj}))^T$ and $\bm{x}_j=(X_{1j},\ldots,X_{nj})^T$ for $j=1,\ldots,d$. Let  $\mathbb{X}_j=[\bm{1} ~ \bm{x}_j ~\cdots ~ \bm{x}_j^{p_j}]$ for $j=1,\ldots,d$, and $\mathbb{X}=[\bm{1} ~ \bm{x}_1 ~ \cdots ~\bm{x}_d ~ \cdots ~ \bm{x}_1^{p_1}~ \cdots ~ \bm{x}_d^{p_d}]$, where $\bm{1}$ is the vector of ones. Let $\mathbb{X}^{[-0]}=[\bm{x}_1 ~ \cdots ~ \bm{x}_d ~ \cdots ~ \bm{x}_1^{p_1}~ \cdots ~ \bm{x}_d^{p_d}]$ which is same as $\mathbb{X}$ without the column of ones. For any matrix $\bm{A}$, define $\bm{A}^{\perp}=\bm{I}-\bm{A}$ and $\bm{P}_{\bm{A}}=\bm{A}(\bm{A}^T\bm{A})^{-1}\bm{A}^T$.
  
  The following definitions are needed for the theoretical results.  Let $\mathcal{M}_1(\bm{H}_{p_j,j}^*)$ be a space spanned by the
  eigenvectors of $\bm{H}_{p_j,j}^*$ with eigenvalue 1. It includes polynomials of $\bm{x}_j$ up to $p_j$th order because $\bm{H}_{p_j,j}^* \bm{x}_j^k=\bm{x}_j^k$, $k=0,1\ldots,p_j$, and $j=1,\ldots,d$.  Suppose $\bm{G}$ is an orthogonal projection onto the space $\mathcal{M}_1(\bm{H}_{p_1,1}^*)+ \cdots + \mathcal{M}_1(\bm{H}_{p_d,d}^*)$ and $\bm{G}_j$ is an orthogonal projection onto the space $\mathcal{M}_1(\bm{H}_{p_j,j}^*)$, $j=1,\ldots,d$. Then,
  \begin{align}
    \bm{G} = \bm{P}_{\mathbb{X}} = \bm{P}_{\bm{1}} + \bm{P}_{\bm{P}_{\bm{1}}^{\perp} \mathbb{X}^{[-0]}}, \qquad \bm{G}_j = \bm{P}_{\mathbb{X}_j}, \label{eqn:back-gmat-def}
  \end{align}
  where $\bm{P}_{\mathbb{X}}=\mathbb{X}(\mathbb{X}^T\mathbb{X})^{-1}\mathbb{X}$ and $\bm{P}_{\bm{1}}$, $\bm{P}_{\mathbb{X}_j}$ and $\bm{P}_{\bm{P}_{\bm{1}}^{\perp} \mathbb{X}^{[-0]}}$ are defined similarly.  Let $\bm{G}_{[-d]}=\bm{P}_{\mathbb{X}^{[-d]}}$ where $\mathbb{X}^{[-d]}=[\bm{1} ~ \bm{x}_1 ~ \cdots ~ \bm{x}_{d-1} ~ \cdots ~ \bm{x}_1^{p_1} ~ \cdots ~ \bm{x}_{d-1}^{p_{d-1}}]$ and $\bm{x}_j^k=(X_{1j}^k, \ldots, X_{nj}^k)^T$ for  $k=0,1,\ldots,p_j$, as in (\ref{eqn:back-gmat-def}). 
  
  Define
  \begin{align}
  \bm{C} &= \bm{P}_{\bm{G}_{[-d]}^{\perp}\mathbb{X}_d^{[-0]}}+\bm{G}^{\perp}\bm{H}_{p_d,d}^*+\bm{H}_{p_d,d}^*\bm{G}^{\perp}-\bm{G}^{\perp} \bm{H}_{p_d,d}^* \bm{H}_{p_d,d}^* \bm{G}^{\perp} + O(n^{-1}h_d^{-1}\bm{I}+ n^{-1}\bm{J}),\label{eqn:cmat}\\
  \bm{D} &= \bm{G}^{\perp}- \sum_{j=1}^d \bigg\{ \bm{H}_{p_j,j}^*\bm{G}^{\perp} + O(n^{-1}h_j^{-1}\bm{I}+n^{-1}\bm{J})\bigg\}, \label{eqn:dmat}\\
  \bm{E} &= \bm{P}_{\bm{G}_{[-d]}^{\perp}\mathbb{X}_d^{[-0]}} + \bm{H}_{p_d,d}^* \bm{G}^{\perp}+O(n^{-1}h_d^{-1}\bm{I}+n^{-1}\bm{J}),\label{eqn:emat}
  \end{align}
  where $\bm{P}_{\bm{G}_{[-d]}^{\perp}\mathbb{X}_d^{[-0]}}=\bm{G}_{[-d]}^{\perp}\mathbb{X}_d^{[-0]} \left(\mathbb{X}_d^{{[-0]}^{T}} \bm{G}_{[-d]}^{\perp} \mathbb{X}_d^{[-0]} \right)^{-1}\mathbb{X}_d^{{[-0]}^{T}} \bm{G}_{[-d]}^{\perp}$, and  $\mathbb{X}_d^{[-0]}=[\bm{x}_d  \cdots  \bm{x}_d^{p_d}]$, $\bm{J}$ is the matrix of ones, and $\bm{I}$ is an identity matrix of size $n$.

  \subsection{Generalized Likelihood Ratio test:} The GLR test statistic is defined as
  \begin{align}
  \lambda_n(H_0) = [\ell(H_1)-\ell(H_0)] \approxeq \frac{n}{2} \log\frac{RSS_0}{RSS_1} \approx \frac{n}{2} \frac{RSS_0-RSS_1}{RSS_1},
  \label{eqn:glrt}
  \end{align}
  where $RSS_0$ and $RSS_1$ are the residual sum of squares under the null and alternative, respectively, and  reject the null hypothesis when $\lambda_n(H_0)$ is large. Analogously, the following F-type of test \citep{huang2010analysis} is also developed
  \begin{align}
    F_{\lambda} & = \frac{\bm{y}^T \bm{C} \bm{y}}{\bm{y}^T \bm{D} \bm{y}} \frac{\text{tr}(\bm{D})}{\text{tr}(\bm{C})}, \label{eqn:f-lambda}
  \end{align}
  where $\text{tr}(\cdot)$ denotes the trace.

  \subsection{Loss Function test:}
The LF test statistic is defined as
\begin{align}
q_n(H_0) &= \frac{Q_n}{n^{-1}RSS_1}\approx \frac{d''(0)/2 \sum_{i=1}^n(\widehat{m}_+(X_{i1},\ldots,X_{id})- \widetilde{m}_{+}^{(-d)}(X_{i1},\ldots,X_{i(d-1)}))^2+R}{n^{-1}RSS_1},
\label{eqn:lft}
\end{align}
where $\widehat{m}_{+}$ and $\widetilde{m}_{+}^{(-d)}$ are the fitted values of the models under null and alternative, respectively, $RSS_1$ is the residual sum of squares under alternative and $R$ is the remainder term in the Taylor expansion of $d(\cdot)$. We reject the null hypothesis  when $q_n(H_0)$ is large.
The corresponding F-type of test statistic is define as 
\begin{align}
  F_q &= \frac{\bm{y}^T\bm{E}^T\bm{E}\bm{y}}{\bm{y}^T\bm{D}\bm{y}} \frac{\text{tr}(\bm{D})}{\text{tr}(\bm{E}^T\bm{E})}, \label{eqn:f-q}
\end{align}
for $\bm{E}$ defined in (\ref{eqn:emat}).

\section{Assumptions} We repeat the assumptions that are outlined in the main document.
\begin{assumption}\label{as:1-x}  The 
	densities $f_j(\cdot)$ of $X_j$  are Lipschitz-continuous and bounded away
	from 0 and have bounded support $\Omega_j$ for $j=1,\ldots,d$. The joint density of $X_j$ and $X_{j'}$, $f_{j,j'}(\cdot,\cdot)$, for $1 \le j \neq j' \le d$, is also Lipschitz continuous and have bounded support.
\end{assumption}
\begin{assumption} \label{as:2-ker-h}  The kernel $K(\cdot)$ is a bounded
	symmetric density function with bounded support and satisfies Lipschitz
	condition. The bandwidth $h_j \rightarrow 0$   and $n h_j^2 / (\ln n)^2 \rightarrow \infty$, $j=1,\ldots,d$, as $n \rightarrow \infty$.
\end{assumption}
\begin{assumption} \label{as:3-m}
	The $(2p_j+2)-$th derivative of $m_j(\cdot)$, $j=1,\ldots,d$, exists. 
\end{assumption}

\begin{assumption} \label{as:4-e}
	The error $\epsilon$ has mean 0, variance
	$\sigma^2$, and  finite fourth moment.
\end{assumption}

\begin{assumption}\label{as:5-d}
	The loss function $d: \mathbb{R} \rightarrow \mathbb{R}^+$ has a unique minimum at $0$, and
	$d(z)$ is monotonically nondecreasing as $|z| \rightarrow \infty$.
	Furthermore, $d(z)$ is twice continuously differentiable at $0$ with $d(0)=0$,
	$d'(0)=0$, $M=\frac{1}{2}d''(0) \in (0,\infty)$, and $|d''(z)-d''(0)| \le
	C|z|$ for any $z$ near $0$.
\end{assumption}

 \section{Required Lemmas and Proofs}

 The explicit expressions for the estimators $\widehat{\bm{m}}_j^*$, $j=1,\ldots,d$ are provided as follows. Let
 \begin{align}
  \bm{A}_j=(\bm{I}-(\bm{H}_{p_j,j}^*- \bm{G}_j))^{-1}(\bm{H}_{p_j,j}^*- \bm{G}_j)~ \text{ and }  \bm{A}=\sum_{j=1}^d\bm{A}_j. \label{eqn:def_aj}
 \end{align}
  By Proposition 3 in \citet{buja1989linear}, we obtain $\widehat{\bm{m}}_j^* = \bm{A}_j \left(\bm{I}+\bm{A}\right)^{-1}\bm{G}^{\perp}\bm{y}$. Therefore, the fitted response under the alternative can be written as 
  \begin{align}
    \widehat{\bm{y}}(H_1) &= \sum_{j=1}^d \widehat{\bm{m}}_j^* + \bm{G}\bm{y}=  \left(\sum_{j=1}^d \bm{A}_j \left(\bm{I}+\bm{A}\right)^{-1}\bm{G}^{\perp} + \bm{G} \right) \bm{y} :=  \bm{W} \bm{y}. \label{eqn:w}
  \end{align}
  Analogously, the fitted response under the null can be written as 
\begin{align}
  \widehat{\bm{y}} (H_0) &= \left(\sum_{j=1}^{d-1} \bm{A}_j \left(\bm{I}+\bm{A}\right)^{-1}\bm{G}_{[-d]}^{\perp} + \bm{G}_{[-d]} \right) \bm{y} :=  \bm{W}^{[-d]} \bm{y}. \label{eqn:w-d}
\end{align}

The following lemma simplifies the expressions for the $RSS_0-RSS_1$.
\begin{lemma} \label{lem:an12}
	Denote $A_{n1}=(\bm{I}-\bm{W}^{[-d]})^T(\bm{I}-\bm{W}^{[-d]})$ and
	$A_{n2}=(\bm{I}-\bm{W})^T(\bm{I}-\bm{W})$ where $\bm{W}$  and $\bm{W}^{[-d]}$ are defined in (\ref{eqn:w}) and (\ref{eqn:w-d}), respectively. If assumptions
	\ref{as:1-x}--\ref{as:3-m} hold, then
	\begin{align}
	RSS_0-RSS_1 &= \bm{y}^T(A_{n1}-A_{n2})\bm{y}
	\end{align}
	and
	\begin{align}
	A_{n1}-A_{n2} &= \bm{P}_{\bm{G}_{[-d]}^{\perp}\mathbb{X}_d^{[-0]}}+\bm{G}^{\perp}\bm{H}_{p_d,d}^*+\bm{H}_{p_d,d}^*\bm{G}^{\perp}-\bm{G}^{\perp} \bm{H}_{p_d,d}^* \bm{H}_{p_d,d}^* \bm{G}^{\perp} + O(n^{-1}h_d^{-1}\bm{I}+ n^{-1}\bm{J}),
  \end{align}
  where $\bm{P}_{\bm{G}_{[-d]}^{\perp}\mathbb{X}_d^{[-0]}}=\bm{G}_{[-d]}^{\perp}\mathbb{X}_d^{[-0]} \left(\mathbb{X}_d^{{[-0]}^{T}} \bm{G}_{[-d]}^{\perp} \mathbb{X}_d^{[-0]} \right)^{-1}\mathbb{X}_d^{{[-0]}^{T}} \bm{G}_{[-d]}^{\perp}$, and  $\mathbb{X}_d^{[-0]}=[\bm{x}_d  \cdots  \bm{x}_d^{p_d}]$, $\bm{J}$ is the matrix of ones, and $\bm{I}$ is an identity matrix of size $n$.
\end{lemma}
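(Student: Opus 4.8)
The statement has two parts, and the first is essentially bookkeeping. By the fitted-value representations (\ref{eqn:w}) and (\ref{eqn:w-d}) we have $RSS_1=\|(\bm{I}-\bm{W})\bm{y}\|^2=\bm{y}^TA_{n2}\bm{y}$ and $RSS_0=\|(\bm{I}-\bm{W}^{[-d]})\bm{y}\|^2=\bm{y}^TA_{n1}\bm{y}$, so subtracting gives $RSS_0-RSS_1=\bm{y}^T(A_{n1}-A_{n2})\bm{y}$ at once. Hence the entire content is the asymptotic simplification of the matrix $A_{n1}-A_{n2}$, which I would establish as an identity valid up to the error matrix $O(n^{-1}h_d^{-1}\bm{I}+n^{-1}\bm{J})$, equalities being understood in the sense required by the quadratic forms that later use $\bm{C}$.

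The first substantive step is to put the residual operators into tractable closed form. Writing $\bm{S}_j=\bm{H}_{p_j,j}^*-\bm{G}_j$ and using $\bm{A}_j=(\bm{I}-\bm{S}_j)^{-1}\bm{S}_j$ from (\ref{eqn:def_aj}), a short computation based on $\bm{I}-\bm{A}(\bm{I}+\bm{A})^{-1}=(\bm{I}+\bm{A})^{-1}$ gives the exact identities $\bm{I}-\bm{W}=(\bm{I}+\bm{A})^{-1}\bm{G}^{\perp}$ and $\bm{I}-\bm{W}^{[-d]}=(\bm{I}+\bm{A}^{[-d]})^{-1}\bm{G}_{[-d]}^{\perp}$, where $\bm{A}^{[-d]}=\sum_{j=1}^{d-1}\bm{A}_j$. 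I would then invoke the smoother properties established in \citet{hua08analysis,hua14local}---symmetry, asymptotic idempotency $\bm{H}_{p_j,j}^{*2}=\bm{H}_{p_j,j}^*+O(n^{-1}h_d^{-1}\bm{I}+n^{-1}\bm{J})$, and the asymptotic orthogonality of smoothers acting in distinct directions under Assumption \ref{as:1-x} (so that $\bm{G}^{\perp}\bm{H}_{p_j,j}^*\bm{H}_{p_k,k}^*\bm{G}^{\perp}$, $j\neq k$, is of the stated error order)---to reduce the inverse to its leading term and obtain $\bm{I}-\bm{W}=\bm{G}^{\perp}-\sum_{j=1}^d\bm{H}_{p_j,j}^*\bm{G}^{\perp}+O(n^{-1}h_d^{-1}\bm{I}+n^{-1}\bm{J})$ together with its reduced-model analogue. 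This is precisely the denominator matrix $\bm{D}$ of (\ref{eqn:dmat}), which serves as an internal consistency check.

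With these expansions I would compute $A_{n1}-A_{n2}=(\bm{I}-\bm{W}^{[-d]})^T(\bm{I}-\bm{W}^{[-d]})-(\bm{I}-\bm{W})^T(\bm{I}-\bm{W})$ by multiplying out. The order-one part of the difference is the projection difference $\bm{G}_{[-d]}^{\perp}-\bm{G}^{\perp}=\bm{G}-\bm{G}_{[-d]}=\bm{P}_{\bm{G}_{[-d]}^{\perp}\mathbb{X}_d^{[-0]}}$, obtained from the orthogonal decomposition $\bm{G}=\bm{G}_{[-d]}+\bm{P}_{\bm{G}_{[-d]}^{\perp}\mathbb{X}_d^{[-0]}}$; this yields the first term of the claim. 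For the smoother-level terms I would apply asymptotic idempotency to the nuisance directions $j=1,\dots,d-1$, so that each $\bm{H}_{p_j,j}^{*2}$ collapses and, after using $\bm{G}_j\bm{G}^{\perp}=\bm{0}=\bm{G}_j\bm{G}_{[-d]}^{\perp}$ (valid since $\mathcal{M}_1(\bm{H}_{p_j,j}^*)\subseteq\mathrm{range}(\bm{G})\cap\mathrm{range}(\bm{G}_{[-d]})$ for $j<d$), the $j<d$ contributions from the two models cancel to the error order; I would keep the $d$-direction terms intact, crucially retaining $\bm{H}_{p_d,d}^*\bm{H}_{p_d,d}^*$, which survives as $-\bm{G}^{\perp}\bm{H}_{p_d,d}^*\bm{H}_{p_d,d}^*\bm{G}^{\perp}$. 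Collecting the surviving cross terms as $\bm{G}^{\perp}\bm{H}_{p_d,d}^*+\bm{H}_{p_d,d}^*\bm{G}^{\perp}$---where the discrepancy between the sandwiched form $\bm{G}^{\perp}\bm{H}_{p_d,d}^*\bm{G}^{\perp}$ arising naturally and this one-sided form is exactly the finite-rank remainder $\bm{G}^{\perp}\bm{H}_{p_d,d}^*\bm{G}+\bm{G}\bm{H}_{p_d,d}^*\bm{G}^{\perp}$ absorbed into the error---produces the stated expression.

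The main obstacle is the error-term bookkeeping in the two reduction steps: showing that the gap between $(\bm{I}+\bm{A})^{-1}$ and its leading approximation, the cross-direction products $\bm{H}_{p_j,j}^*\bm{H}_{p_k,k}^*$ for $j\neq k$, and the finite-rank interactions $\bm{G}\bm{H}_{p_j,j}^*\bm{G}^{\perp}$ are all uniformly of order $O(n^{-1}h_d^{-1}\bm{I}+n^{-1}\bm{J})$. Because $\bm{A}$ has eigenvalues that are not small, this reduction cannot go through a naive Neumann series in $\bm{A}$; instead I would rely on the entrywise asymptotics of $\bm{H}_{p_j,j}^*$ from \citet{hua08analysis,hua14local}, driven by the boundary-corrected kernel, the bandwidth conditions of Assumption \ref{as:2-ker-h}, and the density regularity of Assumption \ref{as:1-x}, to quantify the decoupling across directions. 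This quantification is the technically demanding part of the argument.
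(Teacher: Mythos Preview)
Your proposal is correct and follows essentially the same route as the paper: both arguments reduce $(\bm{I}+\bm{A})^{-1}$ to its leading term via the entrywise asymptotics of the smoothers from \citet{hua08analysis,hua14local,huang2018}, use the orthogonal decomposition $\bm{G}=\bm{G}_{[-d]}+\bm{P}_{\bm{G}_{[-d]}^{\perp}\mathbb{X}_d^{[-0]}}$ to produce the projection term, and then let the $j<d$ contributions cancel between $A_{n1}$ and $A_{n2}$ while retaining the $d$-direction pieces. The only cosmetic difference is that you first record the exact identity $\bm{I}-\bm{W}=(\bm{I}+\bm{A})^{-1}\bm{G}^{\perp}$ before expanding, whereas the paper expands $\bm{A}_j$ and $\bm{A}(\bm{I}+\bm{A})^{-1}$ directly; the subsequent bookkeeping and the appeal to entrywise orders for the error control are the same.
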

\begin{proof}
  As shown in \citet{huang2018}, the diagonal elements of $\bm{H}_{p_j,j}^*$ and $\bm{H}_{p_j,j}^*\bm{H}_{p_j,j}^*$, $j=1,\ldots,d$, are of order $O(n^{-1}h_j^{-1})$ and the off-diagonal elements are of order $O(n^{-1})$. Similarly, the elements of $\bm{H}_{p_j,j}^*\bm{H}_{p_l,l}^*$ are of order $O(n^{-1})$ for $j \neq l$. Since the elements of $\bm{H}_{p_j,j}^*$ are of smaller order, we can write
  \begin{align}
    \begin{split}
    \bm{A}_j &= \left[\bm{I}-(\bm{H}_{p_j,j}^*-\bm{G}_j)\right]^{-1}(\bm{H}_{p_j,j}^*-\bm{G}_j) =\bm{H}_{p_j,j}^*-\bm{G}_j + O(n^{-1}h_j^{-1}\bm{I}+n^{-1}\bm{J}),\\
    \bm{A}(\bm{I}+\bm{A})^{-1} &= \left[\sum\bm{A}_j \right] \left[\bm{I}+\sum\bm{A}_j\right]^{-1}= 
    \sum_{j=1}^d \left\{ \bm{H}_{p_j,j}^*-\bm{G}_j + O(n^{-1}h_j^{-1}\bm{I}+n^{-1}\bm{J}) \right\},  
    \end{split}
    \label{eqn:lem1-a}
  \end{align}
  where $\bm{I}$ is the identity matrix and $\bm{J}$ is the matrix of 1's of size $n$. Since $\bm{G}_j \in \mathcal{M}_1(\bm{H}_{p_j,j}^*)$, it follows that $\bm{H}_{p_j,j}^*\bm{G}_j=\bm{G}_j$. Therefore,
  \begin{align*}
    \bm{H}_{p_j,j}^*-\bm{G}_j = \bm{H}_{p_j,j}^*(\bm{I}-\bm{G}_j)= \bm{H}_{p_j,j}^*\bm{G}_j^{\perp}.
  \end{align*}   
  Consequently, we write (\ref{eqn:w}) as 
  \begin{align}
    \bm{W} &= \bm{A}(\bm{I}+\bm{A})^{-1} \bm{G}^{\perp} + \bm{G} \nonumber \\
    &= \sum_{j=1}^d \left\{ \bm{H}_{p_j,j}^*\bm{G}_j^{\perp} + O(n^{-1}h_j^{-1}\bm{I}+n^{-1}\bm{J}) \right\} \bm{G}^{\perp} + \bm{G} \nonumber \\
    &= \sum_{j=1}^d \left\{ \bm{H}_{p_j,j}^*\bm{G}^{\perp} + O(n^{-1}h_j^{-1}\bm{I}+n^{-1}\bm{J}) \right\}  + \bm{G},
    \label{lem1:w-s1}
  \end{align}
  where the last step uses the fact that $\bm{G}_j^{\perp}\bm{G}^{\perp}=\bm{G}^{\perp}$. Let $\bm{G}_{[-d]}$ be the parametric projection matrix of the first $d-1$ components defined similar to (\ref{eqn:back-gmat-def}). Based on the properties of the projection matrices, we obtain
  \begin{align}
    \bm{G} &= \bm{G}_{[-d]} + \bm{P}_{\bm{G}_{[-d]}^{\perp}\mathbb{X}_d^{[-0]}}, \nonumber \\
    \bm{G}^{\perp} &= \bm{G}_{[-d]}^{\perp} -\bm{P}_{\bm{G}_{[-d]}^{\perp}\mathbb{X}_d^{[-0]}},
    \label{lem1:g-ortho}
  \end{align}
  where $\bm{P}_{\bm{G}_{[-d]}^{\perp}\mathbb{X}_d^{[-0]}}$ defined in (\ref{eqn:emat}).  
    Combination of  (\ref{lem1:w-s1}) and (\ref{lem1:g-ortho}) and some rearrangement of terms yields
  \begin{align*}
    \bm{I}-\bm{W} &= \bm{G}^{\perp}- \left[ \sum_{j=1}^{d-1} \left\{ \bm{H}_{p_j,j}^* \bm{G}_{[-d]}^{\perp} + O(n^{-1}h_j^{-1}\bm{I}+n^{-1}\bm{J}) \right\} + \bm{H}_{p_d,d}^*\bm{G}^{\perp} + O(n^{-1}h_d^{-1}\bm{I}+ n^{-1}\bm{J}) \right]. 
  \end{align*}
  Observe that $\bm{H}_{p_j,j}^* \bm{G}_{[-d]}^{\perp}=\bm{H}_{p_j,j}^* \bm{G}_{j}^{\perp} \bm{G}_{[-d]}^{\perp}$, for $j=1,\ldots, d-1$.  The elements of $\bm{G}_{j}^{\perp} \bm{H}_{p_j,j}^* \bm{H}_{p_j,j}^* \bm{G}_{j}^{\perp}$ are of smaller order than the elements of  $\bm{H}_{p_j,j}^* \bm{G}_{j}^{\perp}$ since the latter has eigenvalues in $[0,1)$. Therefore,
  \begin{align*}
    (\bm{I}&-\bm{W})^T(\bm{I}-\bm{W})\\ &= \bm{G}^{\perp} -\bigg[ \sum_{j=1}^{d-1} \left\{  \bm{G}_{[-d]}^{\perp}\bm{H}_{p_j,j}^* +\bm{H}_{p_j,j}^*\bm{G}_{[-d]}^{\perp} -\bm{G}_{[-d]}^{\perp} \bm{H}_{p_j,j}^* \bm{H}_{p_j,j}^* \bm{G}_{[-d]}^{\perp}  + O(n^{-1}h_j^{-1}\bm{I}+n^{-1}\bm{J}) \right\} \\ & \qquad + \bm{G}^{\perp}\bm{H}_{p_d,d}^*+\bm{H}_{p_d,d}^*\bm{G}^{\perp} -\bm{G}^{\perp} \bm{H}_{p_d,d}^* \bm{H}_{p_d,d}^* \bm{G}^{\perp} + O(n^{-1}h_d^{-1}\bm{I}+ n^{-1}\bm{J}) \bigg]. 
  \end{align*}
  Similar computations yield
  \begin{align*}
    (\bm{I}-&\bm{W}^{[-d]})^T(\bm{I}  -\bm{W}^{[-d]}) \\ &= \bm{G}_{[-d]}^{\perp}- \bigg[ \sum_{j=1}^{d-1} \left\{\bm{G}_{[-d]}^{\perp}\bm{H}_{p_j,j}^*+ \bm{H}_{p_j,j}^* \bm{G}_{[-d]}^{\perp}-\bm{G}_{[-d]}^{\perp} \bm{H}_{p_j,j}^* \bm{H}_{p_j,j}^* \bm{G}_{[-d]}^{\perp}  + O(n^{-1}h_j^{-1}\bm{I}+n^{-1}\bm{J}) \right\} \bigg]. 
  \end{align*}
Hence,
\begin{align*}
	A_{n1}-A_{n2} &=\bm{P}_{\bm{G}_{[-d]}^{\perp}\mathbb{X}_d^{[-0]}}+\bm{G}^{\perp}\bm{H}_{p_d,d}^*+\bm{H}_{p_d,d}^*\bm{G}^{\perp}-\bm{G}^{\perp} \bm{H}_{p_d,d}^* \bm{H}_{p_d,d}^* \bm{G}^{\perp} + O(n^{-1}h_d^{-1}\bm{I}+ n^{-1}\bm{J}).
  \end{align*}

\end{proof}

\begin{lemma}\label{lem:d1n}
 If assumptions \ref{as:1-x}--\ref{as:4-e} hold, then under $H_0:\bm{m}_d=0$
          \begin{align}
            d_{1n} &\equiv \bm{m}_+^T(A_{n1}-A_{n2})\bm{m}_+ + 2\epsilon^T(A_{n1}-A_{n2})\bm{m}_+\nonumber\\&=O_p\left(1+\sum_{j=1}^d nh_j^{4(p_j+1)}+\sum_{j=1}^d\sqrt{n}h_j^{2(p_j+1)}\right),
            \label{eqn:lem2-res}
          \end{align}
          where $A_{n1}$ and $A_{n2}$ are defined in Lemma \ref{lem:an12} and $\bm{m}_+=\bm{m}_1+\ldots+\bm{m}_d$. 
        \end{lemma}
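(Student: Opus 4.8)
The plan is to split $d_{1n}$ into a deterministic quadratic form in the signal and a mean-zero linear term in the noise, and to control each by exploiting that, under $H_0$, the relevant signal $\bm{m}_+=\sum_{j=1}^{d-1}\bm{m}_j$ is a genuinely $(d-1)$-additive smooth function. Writing $\bm{C}=A_{n1}-A_{n2}$ and invoking Lemma~\ref{lem:an12}, I would first record the operator identities $\bm{m}_+^TA_{n1}\bm{m}_+=\|(\bm{I}-\bm{W}^{[-d]})\bm{m}_+\|^2$ and $\bm{m}_+^TA_{n2}\bm{m}_+=\|(\bm{I}-\bm{W})\bm{m}_+\|^2$, so that the quadratic part of $d_{1n}$ is the difference of two squared fitting biases. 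Reformulating through the exact residual operators $\bm{I}-\bm{W}^{[-d]}$ and $\bm{I}-\bm{W}$, rather than the approximate $\bm{C}$, is what keeps the bookkeeping clean.

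For the quadratic part I would appeal to the oracle/interior-bias property of simplified smooth backfitting recalled in Section~\ref{sec:back} (from \citet{huang2018}): applied to the smooth additive signal $\bm{m}_+$, both the reduced fit $\bm{W}^{[-d]}\bm{m}_+$ and the full fit $\bm{W}\bm{m}_+$ reproduce $\bm{m}_+$ up to a pointwise bias of order $\sum_{j=1}^d h_j^{2(p_j+1)}$ on the interior (the spurious $d$th component simply contributes its own bias, since $\bm{m}_d=0$). Hence each squared Euclidean norm is $O_p\big(n\sum_{j=1}^d h_j^{4(p_j+1)}\big)$, and so is their difference; the residual finite-rank contribution from the projection $\bm{P}_{\bm{G}_{[-d]}^{\perp}\mathbb{X}_d^{[-0]}}$ (rank $p_d$) together with the remainder matrices $O(n^{-1}h_d^{-1}\bm{I}+n^{-1}\bm{J})$ in (\ref{eqn:cmat}) leaves the baseline $O_p(1)$ term.

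For the linear part, condition on $\mathcal{X}$ and use Assumption~\ref{as:4-e}: $2\bm{\epsilon}^T\bm{C}\bm{m}_+$ then has mean $0$ and variance $4\sigma^2\,\bm{m}_+^T\bm{C}^2\bm{m}_+$. Since $\bm{H}_{p_d,d}^*$ is shrinking with operator norm at most $1$ and the remaining blocks of $\bm{C}$ are orthogonal projections, $\|\bm{C}\|_{\mathrm{op}}=O(1)$, and approximate nonnegativity of $\bm{C}$ from nestedness of the null model in the alternative gives $\bm{m}_+^T\bm{C}^2\bm{m}_+\le\|\bm{C}\|_{\mathrm{op}}\,\bm{m}_+^T\bm{C}\bm{m}_+=O_p\big(n\sum_j h_j^{4(p_j+1)}\big)$. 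A Chebyshev bound then yields $2\bm{\epsilon}^T\bm{C}\bm{m}_+=O_p\big(\sqrt{n}\sum_{j=1}^d h_j^{2(p_j+1)}\big)$, using $\sqrt{\sum_j h_j^{4(p_j+1)}}\le\sum_j h_j^{2(p_j+1)}$. Adding the three orders $O_p(1)$, $O_p\big(n\sum_j h_j^{4(p_j+1)}\big)$ and $O_p\big(\sqrt{n}\sum_j h_j^{2(p_j+1)}\big)$ gives (\ref{eqn:lem2-res}).

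The main obstacle is the quadratic part. A naive termwise estimate that feeds $\bm{m}_+$ directly into the approximate $\bm{C}$ of (\ref{eqn:cmat}) produces a spurious $O_p(n)$ contribution, because $\bm{m}_+^T\bm{P}_{\bm{G}_{[-d]}^{\perp}\mathbb{X}_d^{[-0]}}\bm{m}_+=\|\bm{G}_{[-d]}^{\perp}\bm{m}_+\|^2-\|\bm{G}^{\perp}\bm{m}_+\|^2$ measures how much the $X_d$-polynomials explain the (correlated) nuisance signal and is generically of exact order $n$. This term must cancel at leading order against $\bm{G}^{\perp}\bm{H}_{p_d,d}^*+\bm{H}_{p_d,d}^*\bm{G}^{\perp}-\bm{G}^{\perp}\bm{H}_{p_d,d}^*\bm{H}_{p_d,d}^*\bm{G}^{\perp}$, reflecting that the $X_d$-smoother extracts essentially the same information as the $X_d$-polynomials (up to the smoothing bias) when applied to a function of the remaining covariates. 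Routing the argument through the exact residual operators and the oracle bias bound is precisely what makes this cancellation automatic, and is the step I would be most careful to justify.
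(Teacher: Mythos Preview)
Your proposal is correct and follows essentially the same strategy as the paper: bound $\|(\bm{I}-\bm{W})\bm{m}_+\|^2$ and $\|(\bm{I}-\bm{W}^{[-d]})\bm{m}_+\|^2$ separately via the oracle bias of simplified smooth backfitting, rather than feeding $\bm{m}_+$ into the approximate $\bm{C}$. Two small points of divergence are worth noting. First, the paper's $O_p(1)$ baseline does not come from the finite-rank projection $\bm{P}_{\bm{G}_{[-d]}^{\perp}\mathbb{X}_d^{[-0]}}$ as you suggest, but from the centering contribution $\bm{G}_j\bm{m}_j=\bm{1}\cdot O_p(n^{-1/2})$, which yields $(\bm{I}-\bm{W})\bm{m}_+=\bm{1}\cdot O_p\big(\sum_j h_j^{2(p_j+1)}\big)+\bm{1}\cdot O_p(n^{-1/2})$ and hence a squared norm of $O_p\big(1+n\sum_j h_j^{4(p_j+1)}\big)$. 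Second, for the linear term the paper avoids your operator-norm/nonnegativity route and simply writes $\bm{\epsilon}^TA_{n2}\bm{m}_+=[(\bm{I}-\bm{W})\bm{\epsilon}]^T[(\bm{I}-\bm{W})\bm{m}_+]$, pairing $(\bm{I}-\bm{W})\bm{\epsilon}=\bm{\epsilon}+\bm{1}\cdot o_p(1)$ with the constant-vector bias bound just derived; this sidesteps the need to justify approximate nonnegativity of $\bm{C}$, which is the one soft spot in your argument.
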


\begin{proof}
          From \cite{hua14local}, we have $\bm{H}_{p_j,j}^* \bm{m}_j= \bm{m}_j + \bm{1} \cdot O_p(h_j^{2(p_j+1)})$ for $p_j =0, 1,2,3$,  and
          $\bm{G}_{j}\bm{m}_{j}= \bm{1} \cdot O_p(1/\sqrt{n})$ for $j=1,\ldots,d$, where $\bm{1}$ is the vector of ones. The calculations analogous to Lemma \ref{lem:an12} yield, under $H_0:\bm{m}_d=0$, that
          \begin{align}
            (\bm{I}-\bm{W})\bm{m}_+ &= \left(\bm{I}- \bm{G} -\sum_{j=1}^d\left\{\bm{H}_{p_j,j}^* \bm{G}^{\perp}+O(n^{-1}h_j^{-1}\bm{I}+n^{-1}\bm{J}) \right\} \right) \bm{m}_+ \nonumber \\
                                    &= \bm{m}_+-\bm{m}_+ + \bm{1} \cdot O_p\left(\sum_{j=1}^d h_j^{2(p_j+1)} \right)+ \bm{1} \cdot O_p \left( 1/\sqrt{n} \right)  \nonumber \\
            &= \bm{1} \cdot O_p\left(\sum_{j=1}^d h_j^{2(p_j+1)} \right)+ \bm{1} \cdot O_p \left( 1/\sqrt{n} \right).
            \label{eqn:lem2-mm}
          \end{align}
          Consequently,
          \begin{align}
            \bm{m}_+^T(\bm{I}-\bm{W})^T(\bm{I}-\bm{W})\bm{m}_+=O_p\left(1+\sum_{j=1}^d nh_j^{4(p_j+1)}\right), 
\\
            \bm{m}_+^T(\bm{I}-\bm{W}^{[-d]})^T(\bm{I}-\bm{W}^{[-d]})\bm{m}_+=O_p\left(1+ \sum_{j=1}^{d-1} nh_j^{4(p_j+1)}\right). \nonumber
                      \end{align}
          Moreover,
          \begin{align*}
            (\bm{I}-\bm{W})\bm{\epsilon} &= \bm{\epsilon} + \bm{1}. o_p\left(1\right)
          \end{align*}
          which implies that under assumption \ref{as:4-e}
          \begin{align}
            \bm{\epsilon}^T(\bm{I}-\bm{W})^T(\bm{I}-\bm{W})\bm{m}_+=O_p(1+\sum_{j=1}^d\sqrt{n}h_j^{2(p_j+1)}).
            \label{eqn:lem2-em}
          \end{align}
          Hence, the stated result (\ref{eqn:lem2-res}) follows from (\ref{eqn:lem2-mm}) and (\ref{eqn:lem2-em}).
        \end{proof}

        \begin{theorem} \label{thm:glr1} (GLR test)
          Suppose that conditions \ref{as:1-x}--\ref{as:4-e} hold and $0 \le p_j \le 3$, $j=1,\ldots,d$. Then, under
          $H_0$ for the testing problem (\ref{eqn:h1})
          \begin{align}
          P\left\{ \sigma_{n}^{-1}\left(  \lambda_n(H_0)-\mu_n-\frac{1}{2\sigma^2}d_{1n} \right) < t |\mathcal{X} \right\} \xrightarrow{d} \bm{\Phi}(t),
          \label{eqn:th1-glr-n}
          \end{align}
          where  $d_{1n}=O_p\left(1+\sum_{j=1}^dnh_j^{4(p_j+1)}+\sum_{j=1}^d\sqrt{n}h_j^{2(p_j+1)}\right)$ and $\bm{\Phi}(\cdot)$ is the standard normal distribution. Furthermore, if $nh_j^{4(p_j+1)}h_d \rightarrow 0$ for $j=1,\ldots,d$, conditional on the sample space $\mathcal{X}$, 
          $ r_k\lambda_n(H_0) \xrightarrow{} \chi^2_{r_k\mu_n}$ as $n \rightarrow \infty$.
          Similarly,
          \begin{align}
          F_{\lambda} &= \frac{2\lambda_n(H_0) \text{tr}(\bm{D})}{n \text{tr}(\bm{C})} \xrightarrow{} F_{\text{tr}(C),\text{tr}(D)},
          \label{eqn:th1-glr-f}
          \end{align}
          as $n \rightarrow \infty$, where $\text{tr}(C)$ and $\text{tr}(D)$ are the degrees of freedom.
        \end{theorem}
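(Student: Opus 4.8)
The plan is to reduce $\lambda_n(H_0)$ to a single quadratic form in the errors and then invoke a central limit theorem for quadratic forms, everything conditional on $\mathcal{X}$. Under $H_0$ we have $\bm{y}=\alpha_0\bm{1}+\bm{m}_++\bm{\epsilon}$ with $\bm{m}_d=\bm{0}$. By Lemma~\ref{lem:an12}, $RSS_0-RSS_1=\bm{y}^T\bm{C}\bm{y}$ with $\bm{C}=A_{n1}-A_{n2}$, while $RSS_1=\bm{y}^T\bm{D}\bm{y}$. First I would show that $n^{-1}\bm{y}^T\bm{D}\bm{y}\xrightarrow{p}\sigma^2$: since the diagonal entries of each $\bm{H}_{p_j,j}^*$ are $O(n^{-1}h_j^{-1})$ and the range of $\bm{G}$ has dimension $O(1)$, one gets $\mathrm{tr}(\bm{D})=n(1+o(1))$, and the law of large numbers for quadratic forms together with the bias bounds behind Lemma~\ref{lem:d1n} gives the claim. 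Consequently $\lambda_n(H_0)=\{2\sigma^2\}^{-1}\bm{y}^T\bm{C}\bm{y}\,(1+o_p(1))$. Because $\bm{C}\bm{1}=\bm{0}$ (the constant vector lies in the ranges of both $\bm{G}$ and $\bm{G}_{[-d]}$), the intercept drops out and I can decompose
\begin{align*}
\bm{y}^T\bm{C}\bm{y}=\bm{m}_+^T\bm{C}\bm{m}_++2\bm{\epsilon}^T\bm{C}\bm{m}_++\bm{\epsilon}^T\bm{C}\bm{\epsilon}=d_{1n}+\bm{\epsilon}^T\bm{C}\bm{\epsilon},
\end{align*}
where $d_{1n}$ is exactly the bias collection controlled by Lemma~\ref{lem:d1n}.

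The heart of the argument is the conditional central limit theorem for the stochastic term, namely
\begin{align*}
\sigma_n^{-1}\,\frac{\bm{\epsilon}^T\bm{C}\bm{\epsilon}-\sigma^2\sum_i c_{ii}}{2\sigma^2}\xrightarrow{d}N(0,1).
\end{align*}
Writing $\bm{\epsilon}^T\bm{C}\bm{\epsilon}=\sum_i c_{ii}\epsilon_i^2+\sum_{i\neq j}c_{ij}\epsilon_i\epsilon_j$, the diagonal fluctuation $\sum_i c_{ii}(\epsilon_i^2-\sigma^2)$ has conditional variance of order $\sum_i c_{ii}^2=O((nh_d^2)^{-1})$ by Assumption~\ref{as:4-e}, which is $o(\sigma_n^2)$ because $\sigma_n^2\asymp h_d^{-1}$ and $nh_d\to\infty$; hence it is negligible after dividing by $\sigma_n$. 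The off-diagonal part $\sum_{i\neq j}c_{ij}\epsilon_i\epsilon_j=2\sum_{i<j}c_{ij}\epsilon_i\epsilon_j$ is a clean (degenerate) quadratic form with conditional variance $4\sigma^4\sum_{i<j}c_{ij}^2=4\sigma^4\sigma_n^2$, so the displayed standardization has variance one; I would establish its asymptotic normality through the martingale central limit theorem for quadratic forms (de Jong), whose Lindeberg-type condition reduces to showing $\max_i\sum_j c_{ij}^2=o(\sigma_n^2)$ and $\mathrm{tr}(\bm{C}^4)=o(\sigma_n^4)$. Both follow from the equivalent-kernel bounds on the entries of $\bm{H}_{p_d,d}^*$ (giving $\max_i\sum_j c_{ij}^2=O((nh_d)^{-1})$ and, since the eigenvalues of $\bm{C}$ are $O(1)$, $\mathrm{tr}(\bm{C}^4)=O(h_d^{-1})$). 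Slutsky's theorem then combines this with the first step; the only interaction term, $\mu_n\,(n^{-1}\bm{y}^T\bm{D}\bm{y}/\sigma^2-1)=O_p(h_d^{-1}n^{-1/2})$, is $o_p(\sigma_n)$ precisely because $nh_d\to\infty$, so the first conclusion of the theorem follows with the $d_{1n}$ recentering retained.

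A separate, more computational ingredient is the evaluation of the conditional moments $\mu_n=\tfrac12 E(\sum_i c_{ii})$ and $\sigma_n^2=\sum_{i<j}c_{ij}^2$ and the verification that they equal the stated kernel-convolution integrals up to $o_p(h_d^{-1})$. From (\ref{eqn:cmat}), $\bm{C}$ is assembled from $\bm{P}_{\bm{G}_{[-d]}^{\perp}\mathbb{X}_d^{[-0]}}$ and products of $\bm{H}_{p_d,d}^*$ with $\bm{G}^{\perp}$; substituting the double-smoothing equivalent-kernel form of the entries of $\bm{H}_{p_d,d}^*$ (expressed through the inverse moment matrix $\bm{S}^{-1}$ with entries $s^{(m+1),(l+1)}$ and the convolutions $K_l*K_m$) and replacing design averages by integrals against $f_d$ under Assumption~\ref{as:1-x} produces the displayed formulas, with $|\Omega_d|$ entering through the integration over the support of $X_d$.

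Finally, for the chi-square and $F$ forms, under the stronger condition $nh_j^{4(p_j+1)}h_d\to 0$ the binding bias contribution is the cross term $2\bm{\epsilon}^T\bm{C}\bm{m}_+=O_p(\sum_j\sqrt{n}\,h_j^{2(p_j+1)})$, for which $d_{1n}/\sigma_n=O_p(\sum_j\sqrt{n}\,h_j^{2(p_j+1)}h_d^{1/2})=o_p(1)$ exactly under this condition, so the recentering by $d_{1n}$ may be dropped. A rescaled chi-square $\chi^2_{r_k\mu_n}$ with $r_k=2\mu_n/\sigma_n^2$ matches both the mean $\mu_n$ and the variance $\sigma_n^2$ of $\lambda_n(H_0)$, and since its degrees of freedom $r_k\mu_n=2\mu_n^2/\sigma_n^2\asymp h_d^{-1}\to\infty$ it is itself asymptotically normal with these two moments; hence $r_k\lambda_n(H_0)$ and $\chi^2_{r_k\mu_n}$ share the limiting law. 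For $F_\lambda=\{\bm{y}^T\bm{C}\bm{y}/\mathrm{tr}(\bm{C})\}/\{\bm{y}^T\bm{D}\bm{y}/\mathrm{tr}(\bm{D})\}$, I would treat numerator and denominator as asymptotically independent rescaled chi-squares on $\mathrm{tr}(\bm{C})$ and $\mathrm{tr}(\bm{D})$ effective degrees of freedom, using $\bm{C}\bm{D}\approx\bm{0}$ so the two quadratic forms decorrelate, whence the ratio matches $F_{\mathrm{tr}(\bm{C}),\mathrm{tr}(\bm{D})}$ to leading order. The main obstacle throughout is the off-diagonal quadratic-form CLT together with the precise moment evaluations: verifying the Lindeberg condition and computing the convolution integrals that define $\mu_n$ and $\sigma_n^2$ is where essentially all the work lies.
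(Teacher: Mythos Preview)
Your proposal is correct and follows essentially the same route as the paper: the decomposition of $RSS_0-RSS_1$ into $d_{1n}+\bm{\epsilon}^T\bm{C}\bm{\epsilon}$ via Lemmas~\ref{lem:an12} and~\ref{lem:d1n}, the diagonal/off-diagonal split $L_1+L_2$, the application of de~Jong's CLT to $L_2$, the analysis $n^{-1}RSS_1=\sigma^2+o_p(1)$, and the asymptotic-orthogonality argument for $F_\lambda$ all mirror the paper exactly. Your write-up is slightly more explicit than the paper in verifying the de~Jong conditions and in handling the Slutsky interaction term $\mu_n(n^{-1}RSS_1/\sigma^2-1)$, but the architecture is identical.
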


\begin{proof}: Recall that 
  \begin{align}
    \lambda_n(H_0) \approx \frac{n}{2} \frac{RSS_0-RSS_1}{RSS_1}.
    \end{align}
     
\underline{\textbf{Proof of \ref{eqn:th1-glr-n}}:}

\vspace{1em}
\textbf{(i) Asymptotic Expression for $RSS_0-RSS_1$:}

Using the notation from Lemma \ref{lem:an12}, we write 
            \begin{align}
              RSS_0-RSS_1 &= \bm{y}^T(A_{n1}-A_{n2})\bm{y} \nonumber \\
                          &= \bm{\epsilon}^T(A_{n1}-A_{n2})\bm{\epsilon} + \left[ \bm{m}_+^T(A_{n1}-A_{n2})\bm{m}_+ + 2\bm{\epsilon}^T(A_{n1}-A_{n2})\bm{m}_+ \right] \nonumber\\
                          &=\bm{\epsilon}^T\bm{C}\bm{\epsilon} + d_{1n},
                            \label{eqn:th1-rs01}
            \end{align}
            where 
            \begin{align} 
              \bm{C} &= A_{n1}-A_{n2} \nonumber \\ &= \bm{P}_{\bm{G}_{[-d]}^{\perp}\mathbb{X}_d^{[-0]}}+\bm{G}^{\perp}\bm{H}_{p_d,d}^*+\bm{H}_{p_d,d}^*\bm{G}^{\perp}-\bm{G}^{\perp} \bm{H}_{p_d,d}^* \bm{H}_{p_d,d}^* \bm{G}^{\perp} + O(n^{-1}h_d^{-1}\bm{I}+ n^{-1}\bm{J}) \nonumber\\
              &=(c_{ij})_{1\le i,j \le n}, \label{eq:glr-c} 
            \end{align}
            and $d_{1n}=\bm{m}_+^T(A_{n1}-A_{n2})\bm{m}_+ + 2\bm{\epsilon}^T(A_{n1}-A_{n2})\bm{m}_+$.
            With the help of Lemma \ref{lem:d1n}, we can bound the bias term $d_{1n}$ by
            $O_p\left(1+\sum_{k=1}^dnh_k^{4(p_k+1)}+\sum_{k=1}^d\sqrt{n}h_k^{2(p_k+1)}\right)$. We write
            \begin{align}
              \bm{\epsilon}^T\bm{C}\bm{\epsilon} &= \sum_{i=1}^n \epsilon_i^2 c_{ii} +\sum_{i \ne j}^n \epsilon_i\epsilon_j c_{ij} 
              =L_1+L_2. \label{eqn:th1-l1l2}
            \end{align}
            Since the leading terms of $\bm{C}$ in (\ref{eq:glr-c}) come from $\bm{H}_{p_d,d}^*$,  we obtain
            $c_{ii}=O(n^{-1}h_d^{-1}+n^{-1})$. Combination of  Assumption \ref{as:4-e} and Chebyshev inequality yields $L_1=\sigma^2
            E(\sum_{i=1}^nc_{ii})+ O_p(1/\sqrt{n}h_d)$. After some algebra, 
            \begin{align*}
E(\sum_{i=1}^nc_{ii}) &=\frac{2| \Omega_d |}{h_d} \bigg(\sum_{l=0}^{p_d}\sum_{m=0}^{p_d} v_{l+m} s^{(m+1),(l+1)} \\ & \qquad - \frac{1}{2} \int \bigg\{ \sum_{l=0}^{p_d}\sum_{m=0}^{p_d} (K_l*K_m)(u) (-1)^m s^{(m+1),(l+1)} \bigg\}^2 du  \bigg)+ o_p(h_d^{-1}), \end{align*}
where $| \Omega_d |$ is the length of the support of the density $f_d(x_d)$ of $X_d$. It remains to show that $L_2$
            converges to normal in distribution. Note that $E[L_2]=0$ and
            \begin{align*}
              \text{Var}(L_2 | \mathcal{X}) = \text{var}\left( \sum_{i \neq j}^n \epsilon_i\epsilon_j c_{ij} \right) = 4\sigma^4 \sigma_{n}^2,              \end{align*}
              where 
              \begin{align*}
              \sigma_{n}^2&= \sum_{i<j}c_{ij}^2= \frac{| \Omega_d |}{h_d} \int \bigg\{ \sum_{l=0}^{p_d}\sum_{m=0}^{p_d} (K_l*K_m)(u) (-1)^m s^{(m+1),(l+1)} \\&\qquad -\frac{1}{2} \int \bigg[ \sum_{l=0}^{p_d} \sum_{m=0}^{p_d} (K_l * K_m) (u+v) (-1)^m s^{(m+1),(l+1)} \bigg] \\ & \qquad \qquad \times \bigg[ \sum_{l=0}^{p_d} \sum_{m=0}^{p_d} (K_l * K_m) (v) (-1)^m s^{(m+1),(l+1)} \bigg] dv \bigg\}^2 du + o_p(h_d^{-1}).
              \end{align*}
            Application of Proposition 3.2 of \citet{de1987central} yields
            \begin{align}
              \frac{1}{2\sigma^2} \sigma_{n}^{-1} L_2 | \mathcal{X} \xrightarrow{d} N(0,1).
              \label{eqn:th1-clt}
            \end{align}

            \textbf{(ii) Asymptotic Expression for $RSS_1/n$:} By the definition of
            $RSS_!$,
            \begin{align*}
              RSS_1 &= \bm{\epsilon}^T A_{n2} \bm{\epsilon} + \bm{m}_+^T A_{n2}\bm{m}_+ + 2\bm{\epsilon}^TA_{n2}\bm{m}_+\\
              &=\bm{\epsilon}^T A_{n2} \bm{\epsilon} +d_{0n}.
            \end{align*}
            The arguments analogous to Lemma \ref{lem:d1n} yields
            \begin{align}
              d_{0n} &= \bm{m}_+^T A_{n2}\bm{m}_+ + 2\bm{\epsilon}^TA_{n2}\bm{m}_+ \nonumber \\
                     &= O_p\left(1+\sum_{k=1}^d nh_k^{4(p_k+1)}+\sum_{k=1}^d \sqrt{n}h_k^{2(p_k+1)}\right)\nonumber.
            \end{align}
            Note that, under the condition \ref{as:2-ker-h}, 
            $d_{0n}/n=o_p(1)$. Thus, it remains to show that
            $n^{-1}\bm{\epsilon}^TA_{n2}\bm{\epsilon}=\sigma^2 + o_p(1)$. From
            the proof of Lemma $\ref{lem:an12}$, we obtain
            \begin{align*}
              n^{-1}\bm{\epsilon}^TA_{n2}\bm{\epsilon} &=n^{-1}\bm{\epsilon}^T\left(\bm{I}- \bm{G}- \left(\sum_{j=1}^d \left\{ \bm{H}_{p_j,j}^*\bm{G}^{\perp} + O(n^{-1}h_j^{-1}\bm{I}+n^{-1}\bm{J}) \right\}  \right) \right)\bm{\epsilon}+o_p(1) \\
               &=n^{-1}\sum_{i=1}^n\epsilon_i^2  +o_p(1)\\
              &= \sigma^2 +o_p(1),
            \end{align*}
            which follows from the Chebyshev inequality and using the arguments
            analogous to the derivation of variance for (\ref{eqn:th1-l1l2}).

            \textbf{(iii) Conclusion :} By part(i), part(ii) and definition of
            $\lambda_n(H_0)$, we have
            \begin{align*}
              \lambda_n(H_0) &\approxeq  \frac{RSS_0-RSS_1}{2RSS_1/n} \\
                             &= \frac{d_{1n}+L_1+L_2}{2\sigma^2}\\
                             &= \frac{d_{1n}+ \sigma^2E(\sum_{i=1}^nc_{ii})+ L_2}{2\sigma^2}+o_p(h_d^{-1})\\
              &\approxeq \frac{d_{1n}}{2\sigma^2} + \mu_n + \frac{L_2}{2\sigma^2},
            \end{align*}
            where $\mu_n=E(\sum_{i=1}^nc_{ii})/2$.
            Therefore,  (\ref{eqn:th1-clt}) implies
            \begin{align*}
              P\left\{ \sigma_n^{-1}\left(  \lambda_n(H_0)-\mu_n-\frac{1}{2\sigma^2}d_{1n}\right) < t | \mathcal{X}\right\} \xrightarrow{d} \Phi(t).
            \end{align*}
           If $nh_k^{4(p_k+1)}h_d \rightarrow 0$ for $k=1,\ldots,d$, then
           $d_{1n}=o_p(h_d^{-1})$ which is dominated by $\mu_n=O(h_d^{-1})$. Then $r_k\lambda_n(H_0)|\mathcal{X} \xrightarrow{} \chi^2_{r_k\mu_n}$ as $n \rightarrow \infty$.
            
 \vspace{2em}

            \underline{\textbf{Proof of  (\ref{eqn:th1-glr-f}):}}

            By virtue of Lemma \ref{lem:an12}, the GLR test statistic is defined as
            \begin{align*}
              \lambda_n(H_0) &\approxeq \frac{n\bm{y}^T(A_{n1}-A_{n2})\bm{y}}{2 \bm{y}^TA_{n2}\bm{y}} \\
                             &=\frac{n\bm{y}^T\bm{C}\bm{y}}{2 \bm{y}^T \bm{D} \bm{y}},
            \end{align*}
            for $\bm{D}=\bm{G}^{\perp}- \left(\sum_{j=1}^d \left\{ \bm{H}_{p_j,j}^*\bm{G}^{\perp} + O(n^{-1}h_j^{-1}\bm{I}+n^{-1}\bm{J})\right\}\right)$ and $\bm{C}$ defined in (\ref{eq:glr-c}).
            As discussed in \citet{huang2010analysis}, for $F-$type statistics,
            \begin{align}
              F &=\frac{\bm{y}^T\bm{C}\bm{y}/\text{tr}(\bm{C})}{\bm{y}^T\bm{D}\bm{y}/\text{tr}(\bm{D})},
              \end{align}
            the $F-$ distribution is warranted if $\bm{C}$ and $\bm{D}$ are both
            projection matrices (symmetric and idempotent) and they are
            orthogonal to each other. Clearly, both $\bm{C}$ and $\bm{D}$ are not projection matrices and not orthogonal to each other. However, following \citet{hua08analysis}, we show these properties hold asymptotically.
            It is straightforward to show both  $\bm{C}$ and $\bm{D}$ are asymptotically
            idempotent. Now it remains to show that they are asymptotically orthogonal. 
            Observe
            \begin{align*}
              E\{\bm{C} \bm{D} \bm{y} | \mathcal{X}\} &= \bm{C} \left(\sum_{j=1}^d \left[O(h_j^{2(p_j+1)}) + O_p(1/\sqrt{nh_j}+1/\sqrt{n}) \right] \right) =o(1).
            \end{align*}
            Based on the definition of asymptotic orthogonality in \citet{hua08analysis}, we claim  $\bm{C}$
            and $\bm{D}$ are asymptotically orthogonal. Therefore,
            \begin{align*}
              F &=\frac{\bm{y}^T\bm{C}\bm{y}/\text{tr}(\bm{C})}{\bm{y}^T\bm{D}\bm{y}/\text{tr}(\bm{D})} 
                = \frac{2\lambda_n(H_0)}{n} \frac{\text{tr}(\bm{D})}{\text{tr}(\bm{C})}  = \frac{2\lambda_n(H_0)}{\text{tr}(\bm{C})}, 
            \end{align*}
            because $\text{tr}(\bm{D})/n=
            \frac{1}{n}\text{tr}\left(\bm{G}^{\perp}- \left(\sum_{j=1}^d \left\{ \bm{H}_{p_j,j}^*\bm{G}^{\perp} + O(n^{-1}h_j^{-1}\bm{I}+n^{-1}\bm{J})\right\}\right)\right) \xrightarrow{} 1 $ as $nh_d \rightarrow
            \infty$ and $n \rightarrow \infty$.
        \end{proof}


%
\begin{theorem}\label{thm:loss1} (LF test)
  Suppose that conditions \ref{as:1-x}--\ref{as:5-d} hold and $0 \le p_j \le 3$, $j=1,\ldots,d$. Then, under
  $H_0$ for the testing problem (\ref{eqn:h1})
  \begin{align}
  P\left\{ \delta_n^{-1} \left(  \frac{q_n(H_0)}{M}-\nu_n-\frac{1}{\sigma^2}b_{1n} \right) < t |\mathcal{X} \right\} \xrightarrow{d} \bm{\Phi}(t), \label{eqn:th2-n}
  \end{align}
  where  $b_{1n}=O_p\left(1+\sum_{j=1}^d nh_j^{4(p_j+1)}\right)$. Furthermore, if $nh_j^{4(p_j+1)}h_d \rightarrow 0$ for $j=1,\ldots,d$, conditional on $\mathcal{X}$, 
  $ s_k M^{-1}q_n(H_0) \xrightarrow{} \chi^2_{s_k\nu_n}$ as $n \rightarrow \infty$.
  Similarly,
  \begin{align}
  F_q &= \frac{q_n(H_0) \text{tr}(\bm{D})}{M n \text{tr}(\bm{E}^T\bm{E})} \xrightarrow{} F_{\text{tr}(\bm{E}^T\bm{E}),\text{tr}(\bm{D})}, \label{eqn:th2-f}
  \end{align}	
  as $n \rightarrow \infty$.
\end{theorem}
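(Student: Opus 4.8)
The argument runs parallel to the proof of Theorem \ref{thm:glr1}, with the matrix $\bm{E}^T\bm{E}$ taking over the role of $\bm{C}$, so I would organize it into the same three stages: reduce $q_n(H_0)/M$ to a ratio of quadratic forms in $\bm{y}$, separate a central quadratic form in $\bm{\epsilon}$ from a bias term, and apply a central limit theorem to the off-diagonal part. First I would show that the numerator of $q_n(H_0)/M$ equals $\bm{y}^T\bm{E}^T\bm{E}\bm{y}$ up to negligible terms. Since $\widehat{\bm{y}}(H_1)-\widehat{\bm{y}}(H_0)=(\bm{W}-\bm{W}^{[-d]})\bm{y}$, the computation in Lemma \ref{lem:an12} (using $\bm{G}-\bm{G}_{[-d]}=\bm{P}_{\bm{G}_{[-d]}^{\perp}\mathbb{X}_d^{[-0]}}$ and absorbing $\sum_{j<d}\bm{H}_{p_j,j}^*(\bm{G}^{\perp}-\bm{G}_{[-d]}^{\perp})$ into the $O(n^{-1}h_d^{-1}\bm{I}+n^{-1}\bm{J})$ remainder) gives $\bm{W}-\bm{W}^{[-d]}=\bm{E}$, hence $\sum_{i=1}^n(\widehat{m}_+-\widetilde{m}_+^{(-d)})^2=\bm{y}^T\bm{E}^T\bm{E}\bm{y}$. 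The Taylor remainder $R$ in (\ref{eqn:lft}) is controlled by the Lipschitz bound $|d''(z)-d''(0)|\le C|z|$ of Assumption \ref{as:5-d}, making it of smaller order than the leading quadratic term, so that $q_n(H_0)/M=\bm{y}^T\bm{E}^T\bm{E}\bm{y}\,/(n^{-1}RSS_1)\,(1+o_p(1))$.

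Next I would split $\bm{y}^T\bm{E}^T\bm{E}\bm{y}=\bm{\epsilon}^T\bm{E}^T\bm{E}\bm{\epsilon}+b_{1n}$ with $b_{1n}=\bm{m}_+^T\bm{E}^T\bm{E}\bm{m}_++2\bm{\epsilon}^T\bm{E}^T\bm{E}\bm{m}_+$, and bound $b_{1n}$ by an analog of Lemma \ref{lem:d1n}. Under $H_0$ one has $\bm{E}\bm{m}_+=(\bm{I}-\bm{W}^{[-d]})\bm{m}_+-(\bm{I}-\bm{W})\bm{m}_+=\bm{1}\cdot O_p\big(\sum_{j}h_j^{2(p_j+1)}\big)+\bm{1}\cdot O_p(n^{-1/2})$, so $\bm{m}_+^T\bm{E}^T\bm{E}\bm{m}_+=\|\bm{E}\bm{m}_+\|^2=O_p\big(1+\sum_{j}nh_j^{4(p_j+1)}\big)$. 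The key difference from $d_{1n}$ is that the cross term carries no extra $\sqrt{n}\,h_j^{2(p_j+1)}$ contribution: because the leading parts $\bm{P}_{\bm{G}_{[-d]}^{\perp}\mathbb{X}_d^{[-0]}}$ and $\bm{H}_{p_d,d}^*\bm{G}^{\perp}$ of $\bm{E}$ send the constant vector $\bm{1}$ to $\bm{0}$ (as $\bm{G}_{[-d]}^{\perp}\bm{1}=\bm{0}$ and $\bm{G}^{\perp}\bm{H}_{p_d,d}^*\bm{1}=\bm{G}^{\perp}\bm{1}=\bm{0}$), the quantity $(\bm{E}\bm{\epsilon})^T(\bm{E}\bm{m}_+)$ collapses to $\bm{\epsilon}^T\bm{E}^T\bm{1}$ times a bias factor and is therefore of the same (smaller) order, yielding the stated $b_{1n}=O_p\big(1+\sum_{j}nh_j^{4(p_j+1)}\big)$.

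Writing $\bm{\epsilon}^T\bm{E}^T\bm{E}\bm{\epsilon}=\sum_i\epsilon_i^2\,\bm{e}_i^T\bm{e}_i+\sum_{i\neq j}\epsilon_i\epsilon_j\,\bm{e}_i^T\bm{e}_j=:L_1+L_2$, the diagonal $L_1$ concentrates at $\sigma^2\nu_n=\sigma^2E(\text{tr}(\bm{E}^T\bm{E}))$ by Chebyshev's inequality under Assumption \ref{as:4-e}, and the stated convolution form of $\nu_n$ follows from the same change-of-variable and Riemann-sum computation used for $\mu_n$. For the degenerate quadratic form $L_2$ I would invoke Proposition 3.2 of \citet{de1987central}, giving $L_2/\sqrt{\text{Var}(L_2\mid\mathcal{X})}\xrightarrow{d}N(0,1)$, where $\text{Var}(L_2\mid\mathcal{X})$ reduces to a constant multiple of $\delta_n^2=\sum_{j\neq j'}(\bm{e}_j^T\bm{e}_{j'})^2$ once the identities for $K_l*K_m$ are applied; combining with $n^{-1}RSS_1=\sigma^2+o_p(1)$ (verbatim from part (ii) of Theorem \ref{thm:glr1}) gives (\ref{eqn:th2-n}). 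When $nh_j^{4(p_j+1)}h_d\to0$, $b_{1n}=o_p(h_d^{-1})$ is dominated by $\nu_n=O(h_d^{-1})$, and matching the first two moments of $s_kM^{-1}q_n(H_0)$ to those of $\chi^2_{s_k\nu_n}$ gives the rescaled chi-square limit. Finally (\ref{eqn:th2-f}) follows exactly as (\ref{eqn:th1-glr-f}): $\bm{E}^T\bm{E}$ and $\bm{D}$ are asymptotically idempotent \citep{hua08analysis} and asymptotically orthogonal, since $E\{\bm{E}^T\bm{E}\,\bm{D}\,\bm{y}\mid\mathcal{X}\}=o(1)$, while $\text{tr}(\bm{D})/n\to1$, so the ratio is asymptotically $F_{\text{tr}(\bm{E}^T\bm{E}),\text{tr}(\bm{D})}$.

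I expect the main obstacle to be the central limit theorem for $L_2$: verifying de Jong's conditions for a generalized quadratic form whose kernel $\bm{e}_i^T\bm{e}_j$ carries the intricate double-convolution structure of $\bm{E}^T\bm{E}$, and checking that its conditional variance collapses to the closed form $\delta_n^2$; the Lyapunov/fourth-moment negligibility there rests on $h_d\to0$ together with $nh_d^2/(\ln n)^2\to\infty$ from Assumption \ref{as:2-ker-h}. A secondary subtlety is the bias bookkeeping that removes the $\sqrt{n}\,h_j^{2(p_j+1)}$ term from $b_{1n}$, which is precisely what lets the LF test attain the Wilks phenomenon under the bandwidth condition $nh_j^{4(p_j+1)}h_d\to0$.
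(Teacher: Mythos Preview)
Your proposal follows essentially the same route as the paper's proof: Taylor-expand $d(\cdot)$ to extract the quadratic form $M\bm{y}^T\bm{E}^T\bm{E}\bm{y}$, split into $\bm{\epsilon}^T\bm{E}^T\bm{E}\bm{\epsilon}+b_{1n}$, then decompose the $\bm{\epsilon}$-quadratic into diagonal plus off-diagonal ($T_{n1}+T_{n2}$ in the paper's notation) with Chebyshev on the diagonal and de Jong's Proposition~3.2 on the off-diagonal; the $F$-part is handled identically via asymptotic idempotence and orthogonality.

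Two comments. First, you single out the de Jong CLT as the main obstacle, but the paper treats that as routine; the one place where the paper does explicit work that you wave through is the Taylor remainder $\sum_i R_i$. From $R_i\le C|\sum_j e_{ij}Y_j|^3$ it is not automatic that the sum over $i$ is $o_p(h_d^{-1})$, and the paper uses $E|x|^3\le (E|x|^4)^{3/4}$ together with the element-wise orders of $e_{ij}$ to obtain $\sum_i R_i=O_p(n^{-2}h_d^{-3})+O_p(1/h_d\sqrt{nh_d})=O_p(1/h_d\sqrt{nh_d})$, which is the needed smallness under Assumption~\ref{as:2-ker-h}. Your ``Lipschitz bound makes it smaller order'' is correct in spirit but would need this computation to be complete. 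Second, your explanation for why $b_{1n}$ lacks the $\sqrt{n}\,h_j^{2(p_j+1)}$ term present in $d_{1n}$---via the annihilation of $\bm{1}$ by the leading parts of $\bm{E}$---is more explicit than the paper, which simply asserts the order in (\ref{eqn:th2-qn-dec}); note that the relevant object in $(\bm{E}\bm{\epsilon})^T(\bm{E}\bm{m}_+)$ is $\bm{E}^T\bm{1}$ rather than $\bm{E}\bm{1}$, though both vanish at leading order for the same reasons you give.
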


\begin{proof} \underline{\textbf{Proof of (\ref{eqn:th2-n}):}}

          Consider the LF test statistic in (\ref{eqn:lft})
         \begin{align*}
           q_n(H_0) &= \frac{Q_n}{n^{-1}SSR1}=\frac{\sum_{i=1}^n d\left\{\sum_{j=1}^n e_{ij}Y_j\right\}}{n^{-1} RSS_1},
         \end{align*}
         where  $d(\cdot)$ is the loss function defined in Assumption \ref{as:5-d} and $e_{ij}$ is the $(i, j)$th, $1 \le i,j \le n$, element of $ \bm{P}_{\bm{G}_{[-d]}^{\perp}\mathbb{X}_d^{[-0]}}+\bm{H}_{p_d,d}^* \bm{G}^{\perp}+O(n^{-1}h_d^{-1}\bm{I}+n^{-1}\bm{J})$. The arguments analogous to Lemma
         \ref{lem:an12} yield that , under $H_0$,
         \begin{align*}
           \sum_{j=1}^n e_{ij}m_{+j} &=  O\left(\sum_{k=1}^d h_k^{2(p_k+1)}\right) +  O_p\left(1/\sqrt{n}\right).
         \end{align*}
         Note that the dominant orders for the elements $e_{ij}$'s come from $\bm{H}_{p_d,d}^*$. Therefore, diagonal elements $e_{ii}$'s are of order $O(n^{-1}h_d^{-1}+n^{-1})$ and the off-diagonal elements $e_{ii'}$, $i \neq i'$, are of order $O(n^{-1})$. By Taylor series expansion of loss function $d(\cdot)$ in the neighborhood of $0$, we obtain
         \begin{align*}
            d(z) &\approx d(0) + d'(0)z+ M z^2+1/2(d''(\bar{z})-d''(0))z^2 = Mz^2+R,
         \end{align*}
         where $d(0)=0$, $d'(0)=0$, $M=d''(0)/2! \in (0,\infty)$ and $\bar{z}$ lies between $0$ and $z$.  Assumption \ref{as:5-d} implies $R \le Cz^3$. Therefore
         \begin{align}
           \sum_{i=1}^n d\left\{\sum_{j=1}^n e_{ij}Y_j\right\} &= M\sum_{i=1}^n \left( \sum_{j=1}^n e_{ij}\epsilon_j \right)^2 + O_p\left(1+\sum_{k=1}^d nh_k^{4(p_k+1)}\right) + \sum_{i=1}^n R_i,
                                                                      \label{eqn:th2-qn-dec}
         \end{align}
         where each $R_i \le C|\sum_{j=1}^n e_{ij}\epsilon_j|^3$. The idea is to
         show that the first term in (\ref{eqn:th2-qn-dec}) converges to normal distribution
         and the third term is of smaller order. Using the relation $E|x|^3 \le
         [E|x|^4]^{3/4}$, we obtain
         \begin{align}
           \sum_{i=1}^nR_i \le C \sum_{i=1}^nE|\sum_{j=1}^n e_{ij}\epsilon_j|^3 &\le C \sum_{i=1}^n \left\{ E\left\vert \sum_{j=1}^n e_{ij}\epsilon_j \right\vert^4 \right\}^{3/4} \nonumber \\
                                                                               & \le CC^*  \sum_{i=1}^n \left\{ \sum_{j=1}^n e_{ij}^4E[\epsilon_j ]^4 \right\}^{3/4}+ CC^* \sum_{i=1}^n \left\{  \sum_{j\ne j'}^n   e_{ij}^2 e_{ij'}^2 E[\epsilon_j \epsilon_{j'}]^2 \right\}^{3/4} \nonumber \\
                                                                               &= O_p\left( n (1/n^3h_d^3) + n(n(n-1)/n^4h_d^2)^{3/4} \right) \nonumber\\
           &=O_p(n^{-2}h_d^{-3}) + O_p(1/h_d\sqrt{nh_d})= O_p(1/h_d\sqrt{nh_d}), \label{eqn:th2-ri} 
         \end{align} 
         where $C^*$ is some positive constant and the exact value of it can be calculated using the expression in page 101 of \citet{lin2010probability}.
         Note that, the first term in (\ref{eqn:th2-qn-dec}) can be written as
         \begin{align}
           M \sum_{i=1}^n \left( \sum_{j=1}^n e_{ij}\epsilon_j \right)^2 &= M\sum_{i=1}^n \sum_{j=1}^n e_{ij}^2 \epsilon_j^2 + M \sum_{i=1}^n \sum_{j\neq j'}^n e_{ij} e_{ij'}\epsilon_j \epsilon_{j'} = T_{n1}+T_{n2}. \label{eqn:th2-tn1n2} 
         \end{align}
         After some algebra, we obtain
         \begin{align*}
          \sum_{i , j} e_{ij}^2=\frac{| \Omega_d |}{h_d} \int \left\{ \sum_{l=0}^{p_d}\sum_{m=0}^{p_d} (K_l*K_m)(u) (-1)^m s^{(m+1),(l+1)} \right\}^2 du + o(h_d^{-1}).
          \end{align*}
          Application of Chebyshev inequality yields that $T_{n1}= M\sigma^2\nu_n+O_p(1/\sqrt{n }h_d)$ where $\nu_n=E(\sum_{i,j}e_{ij}^2)$. Now it remains to show that $T_{n2}$ converges to normal in distribution. Observe that $E(T_{n2})=0$ and
\begin{align}
  var(T_{n2}| \mathcal{X})  &= M^2 \sigma^4 \sum_{j \neq j'} \left( \sum_{i=1}^n e_{ij}e_{ij'} \right)^2 = M^2 \sigma^4 \sum_{j \neq j'} (\bm{e}_{j}^T\bm{e}_{j'})^2= M^2 \sigma^4 \delta_{n}^2 \nonumber,
\end{align}
where $\bm{e}_k=(e_{1k},\ldots,e_{nk})^T$ and $\delta_{n}^2 = \sum_{j \neq j'}(\bm{e}_j^T\bm{e}_{j'})^2$. We note that the leading terms of 
\begin{align*}
  \delta_{n}^2  &= \frac{| \Omega_d |}{h_d} \int \bigg\{ \int \bigg[ \sum_{l=0}^{p_d} \sum_{m=0}^{p_d} (K_l * K_m) (u+v) (-1)^m s^{(m+1),(l+1)} \bigg] \\ & \qquad \times \bigg[ \sum_{l=0}^{p_d} \sum_{m=0}^{p_d} (K_l * K_m) (v) (-1)^m s^{(m+1),(l+1)} \bigg] dv \bigg\}^2 du + o_p(h_d^{-1}).
\end{align*}
Therefore, application of Proposition 3.2 of \citet{de1987central} yields
         \begin{align}
           \frac{1}{M\sigma^2} \delta_n^{-1} T_{n2} |\mathcal{X} \xrightarrow{d} N(0,1).
           \label{eqn:th2-clt}
           \end{align}
By plugging  (\ref{eqn:th2-ri}) and (\ref{eqn:th2-tn1n2}) in
         (\ref{eqn:th2-qn-dec}), we obtain
         \begin{align*}
           Q_n &= T_{n1}+T_{n2}+\sum_{i=1}^n R_i+O_p\left(1+\sum_{k=1}^d nh_k^{4(p_k+1)}\right).
         \end{align*}
         Since $n^{-1}RSS_1 =\sigma^2+o_p(1)$ and $q_n=Q_n/n^{-1}RSS_1$, we have,
         \begin{align}
           \frac{q_{n}(H_0)}{M}-\nu_n-\frac{b_{1n}}{\sigma^2}+o_p(h_d^{-1}) \approxeq \frac{T_{n2}}{M\sigma^2},
           \label{eqn:th2-qn-adj}
         \end{align}
         where $b_{1n}=O_p\left(1+\sum_{k=1}^d nh_k^{4(p_k+1)}\right)$.
Therefore, combination of (\ref{eqn:th2-clt}) and (\ref{eqn:th2-qn-adj})  yields
            \begin{align*}
              P\left\{ \delta_n^{-1} \left(  \frac{q_n(H_0)}{M}-\nu_n-\frac{1}{\sigma^2}b_{1n} \right) < t |\mathcal{X} \right\} \xrightarrow{d} \Phi(t).
            \end{align*}
If $nh_k^{4(p_k+1)}h_d \rightarrow 0$ for $k=1,\ldots,d$, then
$b_{1n}=o_p(h_d^{-1})$ which is dominated by $\nu_n$. Then $s_k M^{-1}q_n(H_0)|\mathcal{X} \xrightarrow{} \chi^2_{s_k\nu_n}$ as $n \rightarrow \infty$.            

\vspace{1em}

\underline{\textbf{Proof of (\ref{eqn:th2-f}):}}
Recall            
\begin{align*}
  q_n(H_0) &= \frac{Q_n}{n^{-1}SSR1}=\frac{\sum_{i=1}^n d\left\{\sum_{j=1}^n e_{ij}Y_j\right\}}{n^{-1} RSS_1}.
\end{align*}
By Taylor's expansion, as in part(a), the numerator can be written as
$M\bm{y}^T\bm{E}^T\bm{E}\bm{y}+R_n$ where $R_n$ is the remainder
term which is of order $o_p(h_d^{-1})$. As in part (b) of Theorem
  \ref{thm:glr1}, we show that $\bm{E}^T\bm{E}$ is asymptotically an idempotent matrix  and $\bm{E}^T\bm{E}$ and $\bm{D}$ are asymptotically orthogonal. 
Hence, the LFT statistic is
         \begin{align}
           q_n(H_0)  \approxeq \frac{M \bm{y}^T\bm{E}^T\bm{E}\bm{y}}{n^{-1}\bm{y}^T\bm{D}\bm{y}} = F \frac{Mtr(\bm{E}^T\bm{E})}{n^{-1}tr(\bm{D})},
         \end{align}
         which implies that, as in part(b) of Theorem \ref{thm:glr1},
         we have
         \begin{align*}
           \frac{q_n(H_0)}{M \text{tr}(\bm{E}^T\bm{E})} \xrightarrow{} F_{\text{tr}(\bm{E}^T\bm{E}),\text{tr}(\bm{D})},
         \end{align*}
         as $nh_d \rightarrow \infty$ and $n \rightarrow \infty$.        
    
\end{proof}
        


 For the following theorem, we consider the contiguous alternative of the form
 \begin{align}
   H_{1n}:m_d(X_d) &= M_n(X_d),
  \label{eqn:h1a}
 \end{align}
 where $M_n(X_d) \rightarrow 0$ and $M_n \in \mathcal{M}_n(\rho; \eta)$.
          
        
 \begin{theorem} \label{thm:glr-lft-h1}
            Suppose $E\{M_n(X_d)|X_1,\ldots,X_{d-1}\}=0$ and $h_d\cdot\sum_{i=1}^n M_n^2(X_{id}) \xrightarrow{P} C_M$ for some constant $C_M$. Suppose $0 \le p_j \le 3$, for $j=1,\ldots,d$.
            \begin{enumerate}[label=(\roman*)]
              \item{[GLR test]} Suppose that conditions \ref{as:1-x}-\ref{as:4-e} hold. Under $H_{1n}$ for the testing problem (\ref{eqn:h1}),
              \begin{align*}
              P\left\{\sigma_{n}^{-1}\left(\lambda_n(H_0)-\mu_n-\frac{d_{1n}+d_{2n}}{2\sigma^2}\right)<t|\mathcal{X}\right\} \xrightarrow{d} \bm{\Phi}(t),
              \end{align*}
              where $\mu_n$, $d_{1n}$ and $\sigma_n$ are same as those in Theorem \ref{thm:glr1} and
              \begin{align*}
              d_{2n} &= \sum_{i=1}^n M_n^2(X_{id}) (1+o_p(1)).
              \end{align*}
              \item{[LF test]} Suppose that conditions \ref{as:1-x}-\ref{as:5-d} hold. Under $H_{1n}$ for the testing problem (\ref{eqn:h1}),
              \begin{align*}
              P\left\{\delta_{n}^{-1}\left(\frac{q_n(H_0)}{M}-\nu_n-\frac{b_{1n}+b_{2n}}{\sigma^2}\right)<t|\mathcal{X}\right\} \xrightarrow{d} \bm{\Phi}(t),
              \end{align*}
              where $\nu_n$, $b_{1n}$ and $\delta_{n}$ are same as those in Theorem \ref{thm:loss1} and 
              \begin{align*}
              b_{2n} &= \sum_{i=1}^n M_n^2(X_{id}) (1+o_p(1)). 
              \end{align*} 
            \end{enumerate}
            
\end{theorem}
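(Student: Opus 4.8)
The plan is to re-run the null-case arguments of Theorems \ref{thm:glr1} and \ref{thm:loss1} almost verbatim, the only difference being that under $H_{1n}$ the mean vector carries an additional signal $\bm{M}_n=(M_n(X_{1d}),\ldots,M_n(X_{nd}))^T$ in the $d$th direction. Writing $\bm{y}=\bm{m}_+^{(-d)}+\bm{M}_n+\bm{\epsilon}$ with $\bm{m}_+^{(-d)}=\bm{m}_1+\cdots+\bm{m}_{d-1}$ (the constant is annihilated by $\bm{C}$ and $\bm{E}$, which act through $\bm{G}^{\perp}$), and recalling $RSS_0-RSS_1=\bm{y}^T\bm{C}\bm{y}$, I would expand the quadratic form and collect every term containing $\bm{M}_n$ into the new bias term
\begin{align*}
d_{2n}=\bm{M}_n^T\bm{C}\bm{M}_n+2(\bm{m}_+^{(-d)})^T\bm{C}\bm{M}_n+2\bm{\epsilon}^T\bm{C}\bm{M}_n,
\end{align*}
exactly as Lemma \ref{lem:d1n} collects the $\bm{m}_+$--terms into $d_{1n}$. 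The terms free of $\bm{M}_n$ reproduce $\mu_n$, the quadratic noise term $L_2$, and $d_{1n}$ unchanged, so the stochastic fluctuation is still governed by $L_2/(2\sigma^2)$; once I show $d_{2n}=\sum_{i=1}^nM_n^2(X_{id})(1+o_p(1))$, part (i) follows from the same central limit theorem (Proposition 3.2 of \citet{de1987central}) applied to $L_2$ and scaled by $\sigma_n$.

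The core step is the signal-recovery identity $\bm{M}_n^T\bm{C}\bm{M}_n=\sum_iM_n^2(X_{id})(1+o_p(1))$, which I would obtain through the representation $\bm{C}=A_{n1}-A_{n2}$ rather than the kernel expansion. Since $\bm{W}^{[-d]}$ omits the $d$th direction and $E\{M_n(X_d)\mid X_1,\ldots,X_{d-1}\}=0$, the reduced fit of $\bm{M}_n$ is asymptotically negligible, so $(\bm{I}-\bm{W}^{[-d]})\bm{M}_n=\bm{M}_n+\bm{1}\cdot O_p(\sum_{j<d}h_j^{2(p_j+1)}+n^{-1/2})$ as in Lemma \ref{lem:d1n}, whence $\bm{M}_n^TA_{n1}\bm{M}_n=\sum_iM_n^2(X_{id})(1+o_p(1))$. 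The full smoother captures the $d$th component, so $\bm{H}_{p_d,d}^*\bm{M}_n=\bm{M}_n+\bm{1}\cdot O_p(h_d^{2(p_d+1)})$ yields $\bm{M}_n^TA_{n2}\bm{M}_n=O_p(1+\sum_jnh_j^{4(p_j+1)})=o_p(\sum_iM_n^2(X_{id}))$ under the bandwidth conditions carried from Theorem \ref{thm:glr1} together with $h_d\sum_iM_n^2(X_{id})\to C_M$. The two cross terms then fall inside the $(1+o_p(1))$: the nuisance--signal term $(\bm{m}_+^{(-d)})^T\bm{C}\bm{M}_n$ is $o_p(\sum_iM_n^2(X_{id}))$ because the conditional-mean-zero assumption and the Wilks-type bandwidth conditions make its ratio to $\sum_iM_n^2(X_{id})$ of order $\sqrt{n}\,h_j^{2(p_j+1)}h_d^{1/2}+n^{-1/2}h_d^{1/2}\to0$, while $2\bm{\epsilon}^T\bm{C}\bm{M}_n$ is mean zero given $\mathcal{X}$ with conditional variance $4\sigma^2\bm{M}_n^T\bm{C}^2\bm{M}_n=O_p(\sum_iM_n^2(X_{id}))$, hence of order $(\sum_iM_n^2(X_{id}))^{1/2}=o_p(\sum_iM_n^2(X_{id}))$ since $\sum_iM_n^2(X_{id})\asymp h_d^{-1}\to\infty$.

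Part (ii) is identical after replacing $\bm{C}$ by $\bm{E}^T\bm{E}$ and $L_2$ by $T_{n2}$. Using the Taylor expansion $d(z)=Mz^2+R$ from the proof of Theorem \ref{thm:loss1}, the numerator is $M\sum_i(\sum_je_{ij}Y_j)^2+\sum_iR_i$ with $\sum_je_{ij}Y_j=(\bm{E}\bm{\epsilon})_i+(\bm{E}\bm{M}_n)_i+(\bm{E}\bm{m}_+^{(-d)})_i$. The decisive fact is again a signal-recovery identity, $\bm{E}\bm{M}_n=\bm{M}_n+\bm{1}\cdot o_p(1)$, which follows from $\bm{E}=\bm{P}_{\bm{G}_{[-d]}^{\perp}\mathbb{X}_d^{[-0]}}+\bm{H}_{p_d,d}^*\bm{G}^{\perp}+O(n^{-1}h_d^{-1}\bm{I}+n^{-1}\bm{J})$ combined with $\bm{H}_{p_d,d}^*\bm{M}_n\approx\bm{M}_n$ and $\bm{G}^{\perp}\bm{M}_n\approx\bm{M}_n$; this gives $b_{2n}=\bm{M}_n^T\bm{E}^T\bm{E}\bm{M}_n(1+o_p(1))=\sum_iM_n^2(X_{id})(1+o_p(1))$, the noise--signal cross term $2\bm{\epsilon}^T\bm{E}^T\bm{M}_n$ and the remainder $\sum_iR_i=O_p(h_d^{-1}(nh_d)^{-1/2})$ both being of smaller order. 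The central limit theorem for $T_{n2}$ then delivers the stated normal limit scaled by $\delta_n$.

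I expect the main obstacle to be the pair of signal-recovery identities $\bm{C}\bm{M}_n\approx\bm{M}_n$ and $\bm{E}\bm{M}_n\approx\bm{M}_n$, together with the bookkeeping that forces every $\bm{M}_n$-by-nuisance and $\bm{M}_n$-by-noise cross term to be strictly of smaller order than the noncentrality $\sum_iM_n^2(X_{id})$, so that it is absorbed into the $(1+o_p(1))$ rather than contaminating the leading variance. This balance rests simultaneously on the smoothing-bias rate $\bm{H}_{p_d,d}^*\bm{M}_n-\bm{M}_n=\bm{1}\cdot O_p(h_d^{2(p_d+1)})$, on the orthogonality supplied by $E\{M_n(X_d)\mid X_1,\ldots,X_{d-1}\}=0$, and on the critical scaling $h_d\sum_iM_n^2(X_{id})\to C_M$; keeping these three ingredients aligned is the technically demanding part of the argument.
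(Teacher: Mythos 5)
Your proposal is correct and follows essentially the same route as the paper's proof: the paper likewise expands $RSS_0-RSS_1=\bm{y}^T(A_{n1}-A_{n2})\bm{y}$ (resp.\ the Taylor-expanded loss with $\bm{E}$) under $H_{1n}$, uses the conditional-mean-zero assumption so that $(\bm{I}-\bm{W}^{[-d]})$ leaves $\bm{M}_n$ essentially intact while the full smoother annihilates it, and then reduces to the null-case CLTs of Theorems \ref{thm:glr1} and \ref{thm:loss1}. The only cosmetic difference is bookkeeping: the paper keeps the noise--signal term $C_n=\sum_i\epsilon_iM_n(X_{id})$ as a separate summand and absorbs it later (noting $C_n=O_p(\sqrt{d_{2n}})$), whereas you fold it directly into the $(1+o_p(1))$ factor of $d_{2n}$ and $b_{2n}$ — the two treatments are equivalent.
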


\begin{proof}  \underline{\textbf{Part (i):}}
          	Under $H_{1n}$ (\ref{eqn:h1a}), the arguments analogous to Lemma \ref{lem:an12} yields,  
          	\begin{align*}
          	(\bm{I}-\bm{W})\bm{m}_+  &=  (\bm{G}^{\perp} -\sum_{j=1}^d \bm{H}_{p_j,j}^* \bm{G}^{\perp}) (\bm{m}_1+\ldots+ \bm{m}_{d-1}+\bm{M}_n) + o_p(1)\\
          	&= \bm{1}\cdot O\left(\sum_{k=1}^d h_k^{2(p_k+1)}\right) + \bm{1}\cdot O_p\left(1/\sqrt{n}\right),
          	\end{align*}
          	where $\bm{M}_n(\bm{x}_d)=(M_n(X_{1d}),\ldots,M_n(X_{nd}))^T$, $\bm{M}_n \in \mathcal{M}_n(\rho; \eta)$ defined in (\ref{eqn:class-h1}) and $\bm{1}$ is the vector of ones of size $n$. Similarly,
          	\begin{gather*}
          	(\bm{I}-\bm{W}^{[-d]})\bm{m}_+  = \left( \bm{G}_{[-d]}^{\perp} -\sum_{j=1}^{d-1} \bm{H}_{p_j,j}^* \bm{G}_{[-d]}^{\perp}  \right)\bm{M}_n +\bm{1}\cdot O\left(\sum_{k=1}^{d-1} h_k^{2(p_k+1)}\right) + \bm{1}\cdot O_p\left(1/\sqrt{n}\right).
            \end{gather*}
            Observe that, same set of arguments yield $(\bm{I}-\bm{W})\bm{\epsilon} = \bm{\epsilon} + o_p(1)$   and    $(\bm{I}-\bm{W}^{[-d]})\bm{\epsilon} =  \bm{\epsilon} +o_p(1)$.
          	Consider,
          	   	\begin{eqnarray}
          	   	RSS_0-RSS_1 &=& \bm{y}^T(A_{n1}-A_{n2})\bm{y} \nonumber \\
          	   	&=& \bm{\epsilon}^T(A_{n1}-A_{n2})\bm{\epsilon} +  \bm{m}_+^T(A_{n1}-A_{n2})\bm{m}_+ + 2\bm{\epsilon}^T(A_{n1}-A_{n2})\bm{m}_+  \nonumber\\
          	   	&=&I_{n1}+I_{n2}+I_{n3}.
          	   	\label{eqn:th3-rs01}
          	   	\end{eqnarray}
          	straightforward computations  yield
          	\begin{align*}
          	I_{n2} &= \bm{M}_n^T\bm{M}_n + O_p\left(1+\sum_{k=1}^d nh_k^{4(p_k+1)}\right) \qquad \text{ and }\\
          	I_{n3} &= \bm{\epsilon}^T\bm{M}_n +O_p\left(1+ \sum_{k=1}^{d} \sqrt{n}h_{k}^{2(p_k+1)}\right).
          	\end{align*}
          	Plugging the above results and the $I_{n1}$ value from Theorem \ref{thm:glr1} in (\ref{eqn:th3-rs01}), we obtain
          	\begin{eqnarray}
          	RSS_0-RSS_1 &=& L_1+L_2+ C_n+ d_{2n} +d_{1n},
          	\end{eqnarray}
          where $L_1$, $L_2$, $d_{1n}$ are same as defined in Theorem \ref{thm:glr1},
          	\begin{align*}
          	d_{2n} &=  \sum_{i=1}^n M_n^2(X_{id})+o_p(h_d^{-1}) \qquad \text{ and } \\
          	C_n &=  \sum_{i=1}^n
          	\epsilon_i M_n(X_{id}).
          	\end{align*}
          	The proof follows by proceeding similar to Theorem \ref{thm:glr1}.
          
 
 \underline{\textbf{Part (ii):}}
The arguments analogous to Lemma \ref{lem:an12} yield that, under $H_1$,
	\begin{align*}
	(\bm{W}-\bm{W}^{[-d]})\bm{m}_+  &=  (\bm{P}_{\bm{G}_{[-d]}^{\perp}\mathbb{X}_d^{[-0]}}+\bm{H}_{p_d,d}^*\bm{G}^{\perp}) (\bm{m}_1+\ldots+\bm{m}_{d-1}+\bm{M}_n)+ o_p(1) \\ &=\bm{M}_n+\bm{1}\cdot O\left(\sum_{k=1}^d h_k^{2(p_k+1)}\right) + \bm{1}\cdot O_p\left(1/\sqrt{n}\right).
	\end{align*}
	Similarly, 
$(\bm{W}-\bm{W}^{[-d]})\bm{\epsilon} =(\sum_{j=1}^ne_{1j}\epsilon_j,\ldots,\sum_{j=1}^n e_{nj}\epsilon_j)^T$
	where $e_{ij}$ is the $(i,j)$th $1 \le i,j \le n$, element in the matrix $ \left\{\bm{P}_{\bm{G}_{[-d]}^{\perp}\mathbb{X}_d^{[-0]}}+\bm{H}_{p_d,d}^*\bm{G}^{\perp} + O(n^{-1}h_d^{-1}\bm{I}+ n^{-1}\bm{J}) \right\}$. Note that the leading terms of  $e_{ij}$'s are of the same order as the elements in  $\bm{H}_{p_d,d}^*$. By Taylor expansion,
	\begin{align}
    \sum_{i=1}^n d\left\{\sum_{j=1}^n e_{ij}Y_j\right\} &= M\sum_{i=1}^n \left( \sum_{j=1}^n e_{ij}\epsilon_j \right)^2 +M \sum_{i=1}^n M_n^2(X_{id}) \nonumber \\& +O_p\left(1+\sum_{k=1}^d nh_k^{4(p_k+1)}\right) + \sum_{i=1}^n R_i,
	\label{eqn:th4-qn-dec}
	\end{align}
	where each $R_i \le C|\sum_{j=1}^n e_{ij}\epsilon_j|^3$. The  proof follows by proceeding similar to Theorem \ref{thm:loss1}.
\end{proof}


\begin{theopargself}
  \begin{theorem}\label{thm:opt-test}
    Under conditions \ref{as:1-x}-\ref{as:5-d}, if
    $h_k^{2(p_k+1)}=O(h_d^{2(p_d+1)})$ and $0 \le p_k \le 3$, for $k=1,\ldots,d-1$, then for the testing
    problem (\ref{eqn:h1}), both GLR and LF tests can detect alternatives
    with rate  $\rho_n=n^{-\frac{4(p_d+1)}{8p_d+9}}$ when $h_d=c_*n^{-\frac{2}{8p_d+9}}$ for some
    constant $c_*$. 
  \end{theorem}
\end{theopargself}

  \begin{proof}
     The proof uses arguments analogous to Theorem 5 in \citet{fan2005nonparametric}. We provide proof only for the GLR
     test and similar arguments can be used to prove the LF test.  Under the contiguous alternative $H_{1n}:m_d(X_d)=M_n(X_d)$, it follows from (i) of Theorem \ref{thm:glr-lft-h1}, 
              \begin{align}
                \lambda_n(H_0) &= \mu_n +\frac{L_2}{2\sigma^2}+\frac{d_{2n}+C_n}{2\sigma^2}+O_p\left(1+ \sum_{k=1}^dnh_k^{4(p_k+1)}+\sum_{k=1}^d\sqrt{n}h_k^{2(p_k+1)}\right),
              \end{align}
              where $d_{2n}=\sum_{i=1}^n M_n^2(X_{id})$ and $C_n=\sum_{i=1}^n
              \epsilon_i M_n(X_{id})$. Since the probability of the type II
              error at $H_{1n}$ is defined as
              $\beta(\alpha,M_n)=P(\phi_h=0|m_d=M_n)$, it implies that
              \begin{align*}
                \beta(\alpha,M_n) &= P\{\sigma_n^{-1}\left( -\lambda_n(H_0) +\mu_n\right) \ge z_{\alpha} |\mathcal{X} \}\\
                                  &= P \left\{ \sigma_n^{-1}\left( -\frac{L_2}{2\sigma^2}-\frac{d_{2n}+C_n}{2\sigma^2}+O_p\left(1+ \sum_{k=1}^dnh_k^{4(p_k+1)}+\sum_{k=1}^d\sqrt{n}h_k^{2(p_k+1)}\right) \right) \ge z_{\alpha} |\mathcal{X} \right\} \\
                &= P_{1n}+P_{2n},
              \end{align*}
              with
              \begin{align*}
                P_{1n} &= P\left\{ \sigma_n^{-1}(-\frac{L_2}{2\sigma^2})+\sqrt{n}h_d^{(4p_d+5)/2}t_{1n}+nh_d^{(8p_d+9)/2}t_{2n}-\sqrt{h_d}t_{3n} \ge z_{\alpha}, |t_{1n}|\le M, |t_{2n}| \le M |  \mathcal{X} \right\},\\ P_{2n} &= P\left\{ \sigma_n^{-1}(-\frac{L_2}{2\sigma^2})+\sqrt{n}h_d^{(4p_d+5)/2}t_{1n}+nh_d^{(8p_d+9)/2}t_{2n}-\sqrt{h_d}t_{3n} \ge z_{\alpha}, |t_{1n}|\ge M, |t_{2n}|\ge M | \mathcal{X} \right\},
              \end{align*}
              and
              \begin{align*}
                t_{1n} &= \left( \sqrt{n}h_d^{(4p_d+5)/2}\sigma_n\right)^{-1} O_p\left(1+\sum_{k=1}^d\sqrt{n}h_k^{2(p_k+1)}\right)= O_p(1),\\
                t_{2n} &= \left( nh_d^{(8p_d+9)/2} \sigma_n\right)^{-1} O_p(\sum_{k=1}^dnh_k^{4(p_k+1)})= O_p(1),\\
                 t_{3n} &= (\sqrt{h_d}\sigma^2 \sigma_n)^{-1} \frac{1}{2}[d_{2n}+C_n]. 
              \end{align*}
             Note that $E[C_n|\mathcal{X}]=0$ and
             $var(C_n|\mathcal{X})= O(\sum_{i=1}^nM_n^2(X_{id}))$ and hence
             $C_n=O_p(\sqrt{d_{2n}})$. Analogous arguments to Lemma B.7 of
             \citet{fan2005nonparametric} lead to 
             \begin{align*}
               \sqrt{h_d} t_{3n} \rightarrow \infty \qquad \text{ only when  } \qquad n\sqrt{h_d}\rho^2 \rightarrow \infty.  
             \end{align*}
             We choose $h_d \le c_0^{-\frac{1}{2(p_d+1)}}n^{-\frac{1}{4(p_d+1)}}$.
             This implies, $\sqrt{n}h_d^{(4p_d+5)/2} \ge c_0 nh_d^{(8p_d+9)/2}$,
             $\sqrt{n}h_d^{(4p_d+5)/2} \rightarrow 0$, and $nh_d^{(8p_d+9)/2}
             \rightarrow 0$. Hence, for $h_d \rightarrow 0$ and $nh_d\rightarrow
             \infty$, it follows that $\beta(\alpha,\rho) \rightarrow 0$ only
             when $n\sqrt{h_d}\rho^2 \rightarrow +\infty$. This implies $\rho_n^2 =
 n^{-1}h_d^{-1/2}$ and the possible minimum value of $\rho_n$ in this setting is  
$n^{\frac{-(8p_d+7)}{16(p_d+1)}}$. When $nh_d^{4(p_d+1)} \rightarrow \infty$, for any $\delta>0$, there
exists a constant $M>0$ such that $P_{2n}<\frac{\delta}{2}$ uniformly in $M_n
\in \mathcal{M}_n(\rho; \eta)$. Then
\begin{align*}
  \beta(\alpha, \rho) \le \frac{\delta}{2} + P_{1n}.
\end{align*}
We note that $\underset{\mathcal{M}_n(\rho; \eta)}{\sup} P_{1n} \rightarrow 0$ only
when $B(h_d)\equiv nh_d^{(8p_d+9)/2}M-nh_d^{1/2}\rho^2 \rightarrow -\infty$. The function
$B(h_d)$ attains the  minimum value
\begin{align*}
-\frac{8(p_d+1)}{8p_d+9}[M (8p_d+9)]^{-\frac{1}{8(p_d+1)}}n \rho^{\frac{8p_d+9}{4(p_d+1)}}
\end{align*} at
$h_d=\left[ \frac{\rho^2}{M(8p_d+9)} \right]^{\frac{1}{4(p_d+1)}}$. With simple
algebra, in this setting, we obtain the corresponding minimum value of
$\rho_n=n^{-\frac{4(p_d+1)}{8p_d+9}}$ at $h_d=c_*n^{-\frac{2}{8p_d+9}}$ for some
constant $c_*$.
\end{proof}


\begin{theorem}
  \label{thm:are}[Relative efficiency] Suppose Conditions \ref{as:1-x}--\ref{as:5-d} hold, $h \propto n^{-\omega}$ for $\omega \in (0,1/(4p_d+5))$ and $p_j = 0$ for $j=1,\ldots,d$. Then Pitman's relative efficiency of the LF test over the GLR test under  $H_n$ in (\ref{eqn:h1-spl}) is given by
  \begin{align*}
    \text{ARE}&(q_n,\lambda_n)  \\
    &=  \left[ \frac{\int \left\{2(K_0*K_0)(u) -\int (K_0*K_0)(u+v) (K_0*K_0)(v) dv \right\}^2 du}{\int \left\{\int (K_0*K_0)(u+v) (K_0*K_0)(v) dv \right\}^2 du} \right]^{1/(2-3\omega)}.
    \end{align*}
    The asymptotic relative efficiency ARE$(q_n,\lambda_n)$ is larger than 1 for any kernel satisfying Condition \ref{as:2-ker-h} and $K(\cdot) \le 1$.
\end{theorem}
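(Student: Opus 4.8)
The plan is to push everything down to the local‑polynomial order $p_d=0$, where the matrix $\bm S$ collapses to the scalar $\mu_0=1$ and $s^{1,1}=1$, so that every double sum $\sum_{l,m}(\cdots)$ appearing in $\sigma_n^2,\delta_n^2,\mu_n,\nu_n$ reduces to its single $l=m=0$ term with $K_0=K$. Writing $a(u)=(K_0*K_0)(u)$ and $b(u)=\int a(u+v)a(v)\,dv$, the expressions preceding Theorems \ref{thm:glr1} and \ref{thm:loss1} become $\sigma_n^2=\frac{|\Omega_d|}{h_d}\int\{a-\tfrac12 b\}^2+o_p(h_d^{-1})$ and $\delta_n^2=\frac{|\Omega_d|}{h_d}\int b^2+o_p(h_d^{-1})$. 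First I would record these explicit forms and note that, since $K$ is symmetric, $a$ is symmetric and hence $b=a*a$.

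Next, from Theorem \ref{thm:glr-lft-h1} the level‑$\alpha$ tests $\phi_{\lambda_n}$ and $\phi_{q_n}$ have, under $H_n$ in (\ref{eqn:h1-spl}), asymptotic powers $\Phi(\mathrm{ncp}_\lambda-z_\alpha)$ and $\Phi(\mathrm{ncp}_q-z_\alpha)$ with $\mathrm{ncp}_\lambda=\frac{\sum_i M_n^2(X_{id})}{2\sigma^2\sigma_n}$ and $\mathrm{ncp}_q=\frac{\sum_i M_n^2(X_{id})}{\sigma^2\delta_n}$; the restriction $\omega\in(0,1/(4p_d+5))$ is exactly what makes the bias pieces $d_{1n},b_{1n}$ of smaller order than $\sigma_n,\delta_n$, so they drop out. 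Because the two noncentralities carry the \emph{same} signal $\sum_i M_n^2(X_{id})$, their ratio is the clean quantity $\mathrm{ncp}_q/\mathrm{ncp}_\lambda=2\sigma_n/\delta_n$. The algebraic heart is then the identity $4\{a-\tfrac12 b\}^2=\{2a-b\}^2$, which turns $4\sigma_n^2/\delta_n^2$ into precisely the bracketed ratio of the statement; call it $\kappa$, so that $(\mathrm{ncp}_q/\mathrm{ncp}_\lambda)^2=\kappa$.

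To convert this noncentrality ratio into Pitman's ARE I would follow \citet{hong2013loss}: express each test's local power as a function of a working sample size $N$ through $h\propto N^{-\omega}$ and the alternative scaling $M_n=n^{-1/2}h^{-1/2}g$ with $\sum_i g^2(X_{id})=O_p(h^{-1})$, so that the common noncentrality behaves like a power $N^{-(2-3\omega)/2}$; equating the two powers and solving for the sample‑size ratio then raises $\kappa$ to the power $1/(2-3\omega)$. The exponent arises from combining the factor $\tfrac12$ produced by squaring the ratio $2\sigma_n/\delta_n$ with the factor $(2-3\omega)/2$ coming from the $N$‑exponent of the noncentrality (signal $\propto n^{-1}h^{-2}$, standard deviation $\propto h^{-1/2}$, and the link $h\propto N^{-\omega}$). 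I expect this exponent bookkeeping — tracking the powers of $n$ and $h_d$ through the signal, the standard deviation and the bandwidth link, and fixing the direction convention of the efficiency ratio — to be the main obstacle; everything else is algebra.

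Finally, to obtain $\mathrm{ARE}>1$ it suffices to prove $\kappa>1$, i.e. $\int\{2a-b\}^2>\int b^2$. Expanding gives $\int\{2a-b\}^2-\int b^2=4\int a(a-b)$, so the claim reduces to $\int a^2>\int ab$. I would settle this by Plancherel: with $\widehat K$ real and even and $b=a*a$, one has $\int a^2=\frac{1}{2\pi}\int\widehat K^4$ and $\int ab=\frac{1}{2\pi}\int\widehat K^6$, hence $\int a^2-\int ab=\frac{1}{2\pi}\int\widehat K^4\bigl(1-\widehat K^2\bigr)$. Since $K$ is a probability density, $|\widehat K(t)|\le\int K=1$ with $\widehat K(t)^2<1$ for $t\neq0$, so the integrand is nonnegative and strictly positive on a set of positive measure, giving $\kappa>1$. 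The extra hypothesis $K\le1$ only serves as a convenient sufficient condition for this positivity and is not essential to the argument.
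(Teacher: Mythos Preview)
Your derivation of the ARE formula follows the same route as the paper: reduce to $p_d=0$, read off the noncentralities from Theorem~\ref{thm:glr-lft-h1}, and convert the noncentrality ratio $2\sigma_n/\delta_n$ into a sample-size ratio via $h\propto n^{-\omega}$, arriving at the exponent $1/(2-3\omega)$. The paper carries this out by letting $n_1,n_2$ be the two sample sizes, matching the local alternatives so that $(n_1/n_2)^{1-\omega}=\sum g_1^2/\sum g_2^2$, and then equating noncentralities to get $\sum g_1^2/\sum g_2^2=2\sigma_{n_1}/\delta_{n_2}$; combining and using $\sigma_{n_i},\delta_{n_i}\propto h_{d_i}^{-1/2}$ gives $(n_1/n_2)^{1-3\omega/2}=\sqrt{\kappa}$. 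Your heuristic that the noncentrality scales like $N^{-(2-3\omega)/2}$ has the sign reversed (it actually grows like $N^{(2-3\omega)/2}$ under the stated local alternatives), but since the exponent enters only through equating two such expressions this does not affect the final formula; still, you should track the direction carefully, as you anticipated.

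Where your argument genuinely differs is in proving $\kappa>1$. The paper shows $\int b^2\le\int a^2$ via Jensen and Fubini, then applies the reverse triangle inequality in $L^2$ to get $\|2a-b\|_2\ge 2\|a\|_2-\|b\|_2\ge\|a\|_2\ge\|b\|_2$; this yields only $\kappa\ge1$. Your expansion $\int(2a-b)^2-\int b^2=4\int a(a-b)$ together with Plancherel, giving $\int a^2-\int ab=\tfrac{1}{2\pi}\int\widehat K^4(1-\widehat K^2)$, is cleaner: it delivers the strict inequality directly (since $|\widehat K|<1$ on a set of positive measure for any nondegenerate density with bounded support) and, as you note, does not use the hypothesis $K\le1$ at all. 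So your route both sharpens the conclusion and clarifies that the extra kernel assumption is superfluous.
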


\begin{proof}
  Pitman's asymptotic relative efficiency of the LF test over the GLR test is the limit of the ratio of the sample sizes required by the two tests to have the same asymptotic power at the same significance level, under the same local alternative [\citet{pitman2018some}, Chapter 7]. Suppose $n_1$ and $n_2$ are the sample sizes required for the LF test and the GLR test, respectively. The Pitman's asymptotic relative efficiency of $q_n$ to $\lambda_n$ is defined as
  \begin{align*}
    \text{ARE}(q_n,\lambda_n) &= \lim_{n_1,n_2 \rightarrow \infty} \frac{n_1}{n_2},
  \end{align*}
  under the condition that $\lambda_n$ and $q_n$ have the same asymptotic power under the same local alternatives $n_1^{-1/2} h_{d_1}^{-1/2} g_1(x_d) \sim n_2^{-1/2} h_{d_2}^{-1/2} g_2(x_d)$ in the sense that
\begin{align*}
  \lim_{n_1,n_2 \rightarrow \infty} \frac{n_1^{-1/2}h_{d_1}^{-1/2}g_1(x_d)}{n_2^{-1/2}h_{d_2}^{-1/2}g_2(x_d)} &=1.
\end{align*}
Given $h_{d_i} =c n_i^{-\omega}$, $i=1,2$, we have $n_1^{-2\gamma} \sum_{i=1}^n g_1^2(X_{di}) \sim n_2^{-2\gamma} \sum_{i=1}^n g_2^2(X_{di})$, where $\gamma=(1-\omega)/2$. Hence,
\begin{align}
  \lim_{n_1,n_2 \rightarrow \infty} \left( \frac{n_1}{n_2} \right)^{2\gamma} &= \frac{\sum_{i=1}^n g_1^2(X_{di})}{\sum_{i=1}^n g_2^2(X_{di})}.
  \label{thm-are:c1}
\end{align}
From Theorem \ref{thm:glr-lft-h1}(i), we have
\begin{align*}
  \frac{\lambda_{n_1}(H_0)-\mu_{n_1}}{\sigma_{n_1}} \xrightarrow[]{d} N(\xi,1),
\end{align*}
under $H_{n_1}: m_d(x_d)=n_1^{-1/2}h_{d_1}^{-1/2}g_1(x_d)$, where $\xi= [\sum_{i=1}^n g_1^2(X_{di})]/(2\sigma^2 \sigma_{n_1})$ with $\sigma_{n_1}$ is defined in Theorem \ref{thm:glr1}. Also, from Theorem \ref{thm:glr-lft-h1}(ii), we have
\begin{align*}
  \frac{M^{-1}q_{n_2}(H_0)-\nu_{n_2}}{\delta_{n_2}} \xrightarrow[]{d} N(\psi,1),
\end{align*}
under $H_{n_2}: m_d(x_d)=n_2^{-1/2}h_{d_2}^{-1/2}g_2(x_d)$, where $\psi=  [\sum_{i=1}^n g_2^2(X_{di})]/(\sigma^2 \delta_{n_2})$ with $\delta_{n_2}$ is defined in Theorem \ref{thm:loss1}. To have the same asymptotic power, the noncentrality parameters must be equal which means $\xi=\psi$ or
\begin{align}
\frac{\sum_{i=1}^n g_1^2(X_{di})}{\sum_{i=1}^n g_2^2(X_{di})} =  \frac{2\sigma_{n_1}}{\delta_{n_2}}. 
\label{thm-are:c2}
\end{align}
Combination of (\ref{thm-are:c1}) and (\ref{thm-are:c2}) yields, for $p_j=0$, $j=1,\ldots,d$,
\begin{align*}
\text{ARE}(q_n,\lambda_n) &= \left[ \frac{2h_{d_1}^{1/2}\sigma_{n_1}}{h_{d_2}^{1/2}\delta_{n_2}} \right]^{2/(2-3\omega)}= \left[ \frac{4h_{d_1}\sigma_{n_1}^2}{h_{d_2}\delta_{n_2}^2} \right]^{1/(2-3\omega)}\\
&=  \left[ \frac{\int \left\{2(K_0*K_0)(u) -\int (K_0*K_0)(u+v) (K_0*K_0)(v) dv \right\}^2 du}{\int \left\{\int (K_0*K_0)(u+v) (K_0*K_0)(v) dv \right\}^2 du} \right]^{1/(2-3\omega)}.
\end{align*}
Now, we show $\text{ARE}(q_n,\lambda_n) \ge 1$ for any positive kernels with $K(\cdot) \le 1$. It is sufficient to show that 
\begin{align*}
  \int \bigg\{2(K_0*K_0)(u) &-\int (K_0*K_0)(u+v) (K_0*K_0)(v) dv \bigg\}^2 du \\ & \qquad \ge \int \left\{\int (K_0*K_0)(u+v) (K_0*K_0)(v) dv \right\}^2 du.
\end{align*}
From Jensen's inequality and Fubini's theorem we obtain
\begin{align}
  \int \bigg\{\int & (K_0*K_0)(u+v) (K_0*K_0)(v) dv \bigg\}^2 du \nonumber \\ & \qquad \le \int \int (K_0*K_0)^2(u+v) (K_0*K_0)(v) dv du \nonumber \\
  & \qquad = \int (K_0*K_0)^2(u) du. \label{thm-are:fu} 
\end{align}
Triangle inequality and (\ref{thm-are:fu}) yields that
\begin{align}
  \bigg\{ & \int  \bigg\{2(K_0*K_0)(u) -\int (K_0*K_0)(u+v) (K_0*K_0)(v) dv \bigg\}^2 du \bigg\}^{1/2} \nonumber \\
  &\qquad \ge  2 \bigg\{ \int (K_0*K_0)^2(u) du \bigg\}^{1/2}- \bigg\{ \int \bigg\{ \int (K_0*K_0)(u+v) (K_0*K_0)(v) dv \bigg\}^2 du \bigg\}^{1/2} \nonumber \\
  &\qquad \ge 2 \bigg\{ \int (K_0*K_0)^2(u) du \bigg\}^{1/2}- \bigg\{ \int (K_0*K_0)^2(u) du \bigg\}^{1/2} \nonumber \\
  &\qquad = \bigg\{ \int (K_0*K_0)^2(u) du \bigg\}^{1/2}. \label{thm-are:te}
\end{align}
Combination of (\ref{thm-are:te}) and (\ref{thm-are:fu}) yields
\begin{align*}
  \bigg\Vert & 2(K_0*K_0)(u) -\int (K_0*K_0)(u+v) (K_0*K_0)(v) dv \bigg\Vert_2 \\
  & \qquad  \ge \bigg\Vert (K_0*K_0)(u) \bigg\Vert_2 \ge \bigg\Vert \int (K_0*K_0)(u+v) (K_0*K_0)(v) dv \bigg\Vert_2. 
\end{align*} 
Hence, the LF test is asymptotically more efficient than the GLR test.

\end{proof}

\section*{Numerical Comparison- Extra results}

\subsection*{Conditional Bootstrap}
\begin{enumerate}[label=(\alph*)]
  \item Fix the bandwidths at their estimated values $(\widehat{h}_1, \widehat{h}_2,\widehat{h}_3, \widehat{h}_4)$ and then obtain the estimates of additive functions under both null and unrestricted additive models.
  \item Compute  $\lambda_n$, $q_n$, $\lambda_n(FJ)$, $F_{\lambda}$, $F_q$, $S_n$  and the residuals $\widehat{\epsilon}_i$, $i=1,\ldots,n$, from the unrestricted model.
  \item For each $(X_{1i},X_{2i},X_{3i},X_{4i})$, draw a bootstrap residual $\widehat{\epsilon}_i^*$ from the centered empirical distribution of $\widehat{\epsilon}_i$ and compute $Y_i^*= \widehat{m}_0+ \widehat{m}_1(X_{1i})+ \widehat{m}_3(X_{3i})+\widehat{m}_4(X_{4i})+ \widehat{\epsilon}_i^*$, where $\widehat{m}_1$, $\widehat{m}_3$ and $\widehat{m}_4$ are the estimated additive functions under the restricted model in step (a). This forms a conditional bootstrap sample $(Y_i^*, X_{1i},X_{2i},X_{3i},X_{4i})_{i=1}^n$.
  \item Using the bootstrap sample in step (c) and bandwidths in step (a), obtain  $\lambda_n^*$, $q_n^*$, $\lambda_n^*(FJ)$, $F^*_{\lambda}$, $F^*_q$, $S_n^*$.
  \item Repeat steps (c) and (d) for a total of $B$ times, where $B$ is large number. We then obtain a sample of statistics.
  \item Compute the bootstrap $P$ values $P_{\lambda}^*= B^{-1}\sum_{l=1}^B \bm{1}(\lambda_n < \lambda_{nl}^*)$ for all the statistics. Reject $H_0$ at a prespecified significance level $\alpha$ if and only if $P_{\lambda}^* <\alpha$. Repeat this process for the all the above statistics. 
\end{enumerate}


 %

%
\begin{figure}[!htbp]
  \centering
 \includegraphics[scale=0.3]{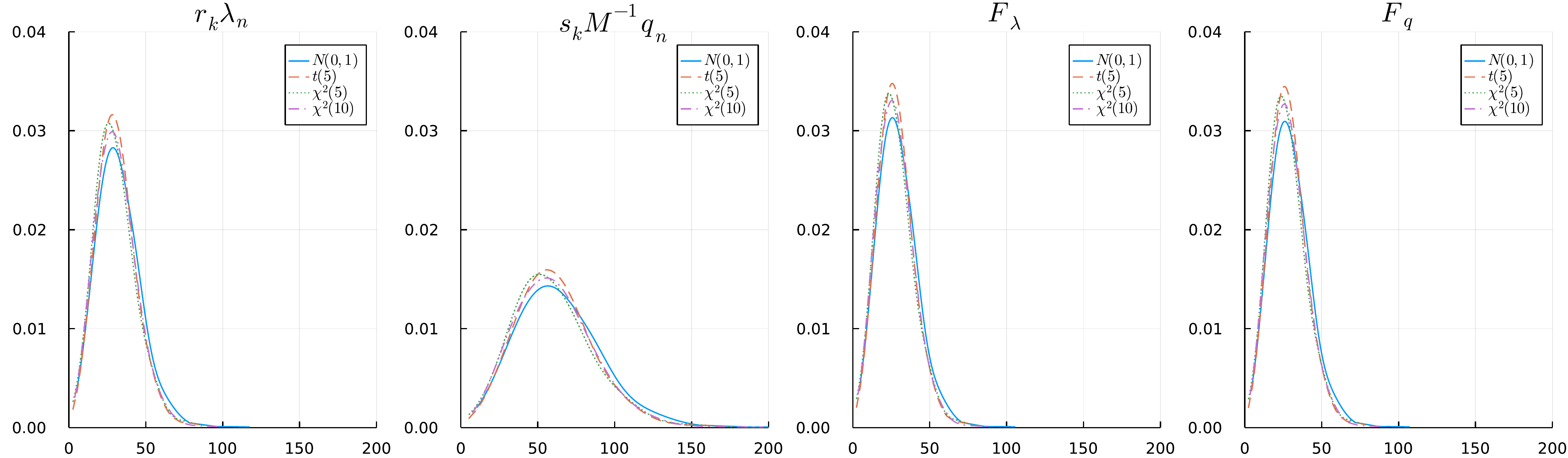}
   \caption{Estimated densities for scaled GLR  and   LF test statistics, and F statistics, among 1000 simulations under different errors (--- normal; $- - - t(5)$; $\cdots \chi^2(5)$; $-\cdot- \chi^2(10)$ ). Here, the errors except normal are scaled to have mean 0 and variance 1. }
   \label{fig:sim1-1b}
\end{figure}

\begin{figure}
  \centering
  \includegraphics[scale=0.35]{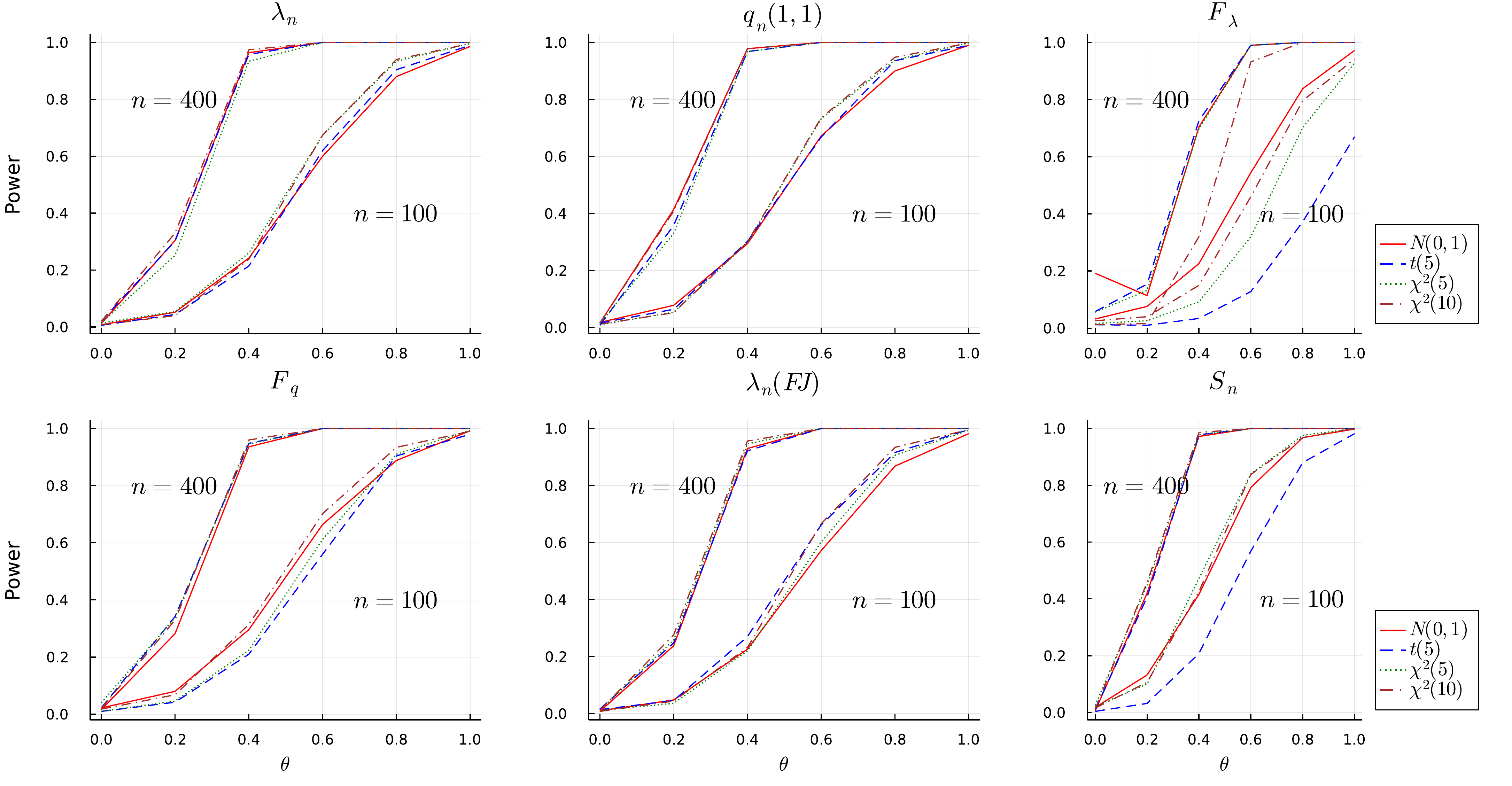}
  \caption{Power of the tests under alternative model sequence (\ref{eqn:htheta}) using bandwidths $S_X n^{-2/17}$ at 1\% level of significance. Only the LF test with LINEX loss function (\ref{eqn:linex-def}) for $s=1,t=1$ is reported. The power values are similar for other choices of $s$ and $t$. }
  \label{fig:sim-one-bseven}
\end{figure}
%

\begin{figure}
  \centering
  \includegraphics[scale=0.35]{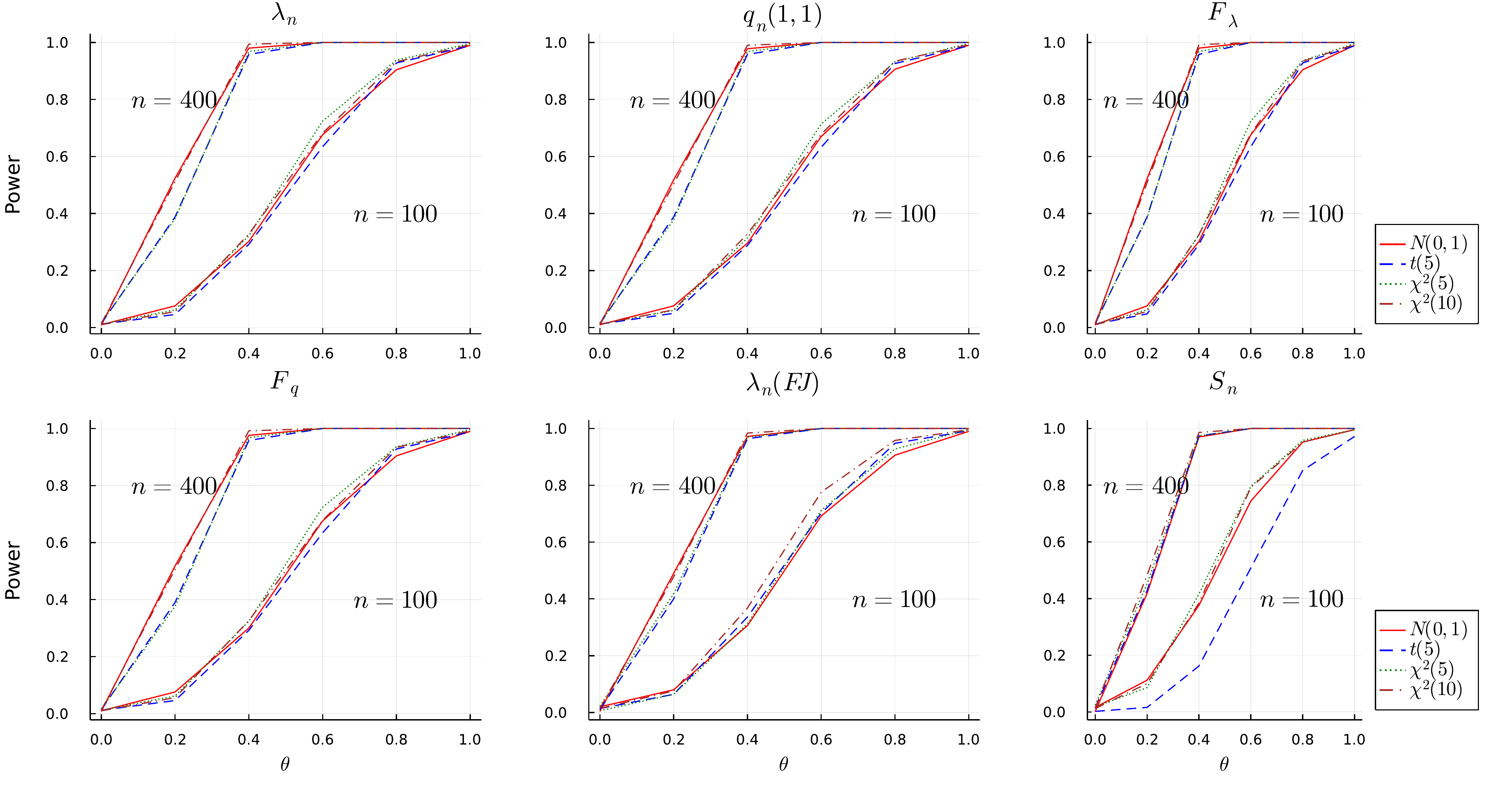}
  \caption{Power of the tests under alternative model sequence (\ref{eqn:htheta}) using optimal bandwidths (cross-validation) at 1\% level of significance. Only the LF test with LINEX loss function (\ref{eqn:linex-def}) for $s=1,t=1$ is reported. The power values are similar for other choices of $s$ and $t$. }
  \label{fig:sim-one-bopt}
\end{figure}

\end{document}